%
%
\documentclass{amsproc}
\def\Frac#1#2{\frac{\displaystyle{#1}}{\displaystyle{#2}}}
\def\cali{{\mathcal{I}}}
\newtheorem{theorem}{Theorem}[section]
\newtheorem{lemma}[theorem]{Lemma}
\newtheorem{corollary}[theorem]{Corollary}
\theoremstyle{definition}

\newtheorem{conjecture}[theorem]{Conjecture}

\theoremstyle{remark}
\newtheorem{remark}[theorem]{Remark}

\numberwithin{equation}{section}



\begin{document}

\title{On bounds for ratios of contiguous hypergeometric functions}

\author{Javier Segura}
\address{Departamento de Matem\'aticas, Estad\'{\i}stica y Computaci\'on. Universidad de Cantabria. 39005-Santander.}
\email{javier.segura@unican.es}
\thanks{The author acknowledges support from Ministerio de Ciencia e Innovaci\'on, project
 PID2021-127252NB-I00 with funds from MCIN/AEI/10.13039/501100011033/ FEDER, UE. The author thanks the two anonymous reviewers 
 for many useful comments and suggestions.}

\subjclass[2020]{Primary 33C15, 33C05; Secondary 33C10, 26D07, 41A99}
\keywords{Confluent and Gauss hypergeometric functions, Weber parabolic cylinder functions, modified Bessel functions, bounds}

\begin{abstract}
We review recent results on analytical properties (monotonicity and bounds) for ratios of contiguous functions of
hypergeometric type. The cases of parabolic cylinder functions and modified Bessel functions have been discussed with
considerable detail in the literature, and we give a brief account of these results, completing some aspects in the 
case of parabolic cylinder functions.  Different techniques for obtaining these bounds are considered. They
are all based on simple qualitative descriptions of the solutions of associated ODEs (mainly Riccati
equations, but not only Riccati). In spite of their simplicity, they provide the most accurate global bounds known so far. 
We also provide examples of application of these ideas to the more general cases of the Kummer confluent function and the
Gauss hypergeometric function.
The function ratios described in this paper are important functions appearing in a
large number of applications, in which simple approximations are very often required. 
\end{abstract}

\maketitle

\section{Introduction}

Many special functions, and in particular those of hypergeometric type, satisfy first order
differential systems of the form
$$
\begin{array}{l}
y'_n=a_n (x)y_n(x)+d_n(x) y_{n-1}(x),\\
y'_{n-1}=b_n (x) y_{n-1}(x) +e_n (x) y_n (x).
\end{array}
$$
This is the case of the Gauss hypergeometric functions 
$y_n ={}_2{\rm F}_1 (a+\epsilon_1 n,b+\epsilon_2 n;c+\epsilon_3 n;x)$, $n\in {\mathbb N}$, for any
$\epsilon_1,\,\epsilon_2,\,\epsilon_3\in {\mathbb Z}$ and, as a consequence, of the 
confluent hypergeometric (Kummer) function 
$y_n ={}_1{\rm F}_1 (a+\epsilon_1 n;c+\epsilon_3 n;x)$. The functions $y_n(x)$ and 
$y_{n-1}(x)$ are said to be contiguous functions. 

In \cite{Segura:2012:OBF} it is discussed how to obtain bounds for the ratios of contiguous
functions, $h_n(x)=y_n (x)/y_{n-1}(x)$, from the qualitative study of the solutions of the
Riccati equation satisfied by this ratio: 
$$
h'_n (x)=d_n (x)-(b_n(x)-a_n(x))h_n(x)-e_n h_n (x)^2,
$$
or the analogous equation for the reciprocal ratio $y_{n-1} (x)/y_{n}(x)$.
This can be combined with the application of the three-term
recurrence relation
$$
e_{n+1}y_{n+1}(x)+(b_{n+1}(x)-a_n(x))y_n(x)-d_n(x)y_{n-1}(x)=0.
$$

These methods originating from the analysis of the Riccati equation 
have been carried out with particular detail for the case of modified Bessel functions, which
is a sub-case of the confluent hypergeometric family 
with $\epsilon_1=1$, $\epsilon_3=2$ (see \cite[1039.5-6]{Olver:2010:BF}). 
This is an important set of functions with an uncountable number of applications.
It was recently proved in \cite{Segura:2023:SBW} that these methods suffice to characterize 
the best possible bounds of
the form $(\alpha+\sqrt{\beta^2 +x^2})/x$ for the ratios of modified Bessel functions
$I_{\nu-1}(x)/I_{\nu}(x)$ and $K_{\nu+1}(x)/K_{\nu}(x)$. 
An earlier application of these methods to the general confluent case can be found in 
\cite{Segura:2016:SBF}, and later in \cite{Sablica:2022:OBF}.

Modifications of these techniques were considered in  
\cite{Ruiz:2016:ANT} and \cite{Segura:2021:MPF} for modified Bessel functions. In
 the first reference, bounds with improved accuracy are obtained by iterating the process 
 of obtaining bounds from the Riccati equation. In the second, an analysis of the 
 solutions of an equation of the type $\phi'(x)=P(x,f(\phi(x)))$ is considered, with $P(x,y)$ a third degree polynomial
 in $y$, $f$ a simple algebraic function and $\phi(x)$ the double ratio 
 $\phi (x)=h_n (x)/h_{n+1}(x)$.  
 Bounds with improved accuracy are obtained, at the cost of not so simple expressions compared
  to those in \cite{Segura:2023:SBW}. 
 Double ratios were previously considered in \cite{Segura:2021:UVS}
 for Parabolic cylinder functions, and very
 sharp bounds for the ratios of PCFs (sharp in three different limits) were obtained as a
 consequence.
 
 In this paper, we summarize the basic ideas and techniques presented in 
 \cite{Ruiz:2016:ANT,Segura:2021:MPF,Segura:2021:UVS,Segura:2023:SBW}, and we collect the
 most significant results for parabolic cylinder functions and 
 modified Bessel functions. These ideas 
 should be applicable to the more general case of hypergeometric functions, 
 and we apply some of them
 to the Kummer function and the Gauss hypergeometric functions. The analysis of Kummer and Gauss
 functions cases will
 be far from complete; we expect that an exhaustive analysis like the one presented in 
 \cite{Segura:2023:SBW} for modified Bessel functions
 can be carried out, of which the present work can be seen as only the starting point 
 (following the first step in \cite{Segura:2016:SBF} for the Kummer function). In addition, we believe that it should
 be also possible to consider the extended methods of 
 \cite{Ruiz:2016:ANT,Segura:2021:MPF,Segura:2021:UVS} in a more general setting.

\section{Qualitative analysis and bounds}

It appears that most of the results on bounds of ratios of consecutive hypergeometric functions known so far, if not all, 
can be obtained by a qualitative analysis of some first order differential equation satisfied by these or
related ratios, combined with the use of the recurrence relation. 

The most ubiquitous result is probably that concerning the bounds for solutions of Riccati 
equations, which has been used to obtain a good number of sharp inequalities 
for parabolic cylinder functions 
\cite{Segura:2021:UVS}, modified Bessel functions 
\cite{Simpson:1984:SMR,Yuan:2000:OTB,Segura:2011:BFR,Hornik:2013:ABF,Ruiz:2016:ANT,Segura:2021:MPF}
and confluent hypergeometric functions 
\cite{Segura:2016:SBF,Sablica:2022:OBF}. 
Many of these and related results have been also obtained using alternative methods, 
particularly for the case of modified Bessel functions (see for instance 
\cite{Amos:1974:COM,Laforgia:2010:SIF,Baricz:2009:OAP,Yang:2018:MOF}) and also for 
confluent \cite{Kalmykov:2013:LCA,Kalmykov:2013:LCF,Sra:2013:TMW} 
and Gauss hypergeometric functions \cite{Kalmykov:2014:LCO}. 

The approach 
based on the qualitative analysis of first order ODEs, together with the 
application of the recurrence relation, seems to be sufficient for obtaining all the known bounds. This was
explicitly proved in \cite{Segura:2023:SBW} for the case of bounds of the type  
$(\alpha+\sqrt{\beta^2+\gamma^2 x^2})/x$ for the modified Bessel function ratios. In this paper,
we focus on these methods based on the qualitative analysis of ODEs.

Our analysis will start from the construction of bounds from the analysis of Riccati equations, based on the 
following result (see \cite[Theorem 1]{Ruiz:2016:ANT}):

\begin{theorem}
\label{second}
Let $h(x)$ be a solution of $h'(x)=a(x)+b(x)h(x)+c(x)h(x)^2$ with $a(x)c(x)<0$ and $a(x),\,b(x),\, c(x)$ 
continuous in $[a,b]$. Let
 $\lambda (x)$ be the positive solution of $a(x)+b(x)\lambda(x)+c(x)\lambda (x)^2=0$, then 
 the following holds:
 \begin{enumerate}
 \item{If} $c(x)<0$, $h(a^+)>0$, $h' (a^+)\lambda'(a^+)>0$ then $h(x)<\lambda (x)$ if 
 $\lambda'(x)>0$ and $h(x)>\lambda (x)$ if 
 $\lambda'(x)<0$.
 \item{If} $c(x)>0$, $h(b^-)>0$, $h' (b^-)\lambda'(b^-)>0$ then $h(x)<\lambda (x)$ if 
 $\lambda'(x)<0$ and $h(x)>\lambda (x)$ if 
 $\lambda'(x)>0$.
 \end{enumerate} 
 In all these cases $\lambda '(x) h'(x)>0$ for all $x\in (a,b)$, and $h(x)$ is a bounded function 
 in any compact subset of $(a,b)$.
\end{theorem}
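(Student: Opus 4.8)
The plan is to rewrite the Riccati equation in factored form and then run a barrier (no-crossing) argument against the curve $\lambda$. Since $a(x)c(x)<0$, the quadratic $a(x)+b(x)y+c(x)y^2=0$ has real roots whose product $a(x)/c(x)$ is negative; hence it has exactly one positive root $\lambda(x)$ and one negative root, call it $\mu(x)$, with $\mu(x)<0<\lambda(x)$. This yields the factorization $h'(x)=c(x)\bigl(h(x)-\lambda(x)\bigr)\bigl(h(x)-\mu(x)\bigr)$, which I would use throughout to read off the sign of $h'(x)$ from the position of $h(x)$ relative to the two roots. In particular, when $h$ lies strictly between $\mu$ and $\lambda$ the two factors have opposite signs, so $h'$ has the sign of $-c$; when $h>\lambda$ both factors are positive, so $h'$ has the sign of $c$.

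The central observation is a transversality property of $\lambda$ viewed as a barrier: at any point $x_0$ with $h(x_0)=\lambda(x_0)$ one has $h'(x_0)=a(x_0)+b(x_0)\lambda(x_0)+c(x_0)\lambda(x_0)^2=0$, and therefore $\bigl(h-\lambda\bigr)'(x_0)=-\lambda'(x_0)\neq 0$. Thus $h-\lambda$ can only vanish transversally, and the sign of its derivative at a zero is fixed, opposite to that of $\lambda'$. The argument then proceeds in two steps. First, I would use the endpoint data together with the sign table for $h'$ to pin down the sign of $h-\lambda$ near the relevant endpoint: in case (1), for instance, $c<0$, so $h'$ is positive precisely when $\mu<h<\lambda$; combined with $h(a^+)>0>\mu(a^+)$ and $h'(a^+)\lambda'(a^+)>0$, the sign of $\lambda'$ forces $h<\lambda$ near $a$ when $\lambda'>0$ and $h>\lambda$ near $a$ when $\lambda'<0$. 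Second, I would invoke transversality: any crossing of $\lambda$ would have to occur in the direction opposite to the sign already established near the endpoint, which is impossible. Taking the extremal crossing point and comparing with the endpoint sign yields a contradiction, so $h-\lambda$ keeps its sign on all of $(a,b)$, giving the stated inequality. Case (2) is identical in spirit but run from the endpoint $b$, since $c>0$ reverses the stable direction of the flow.

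To obtain $\lambda'(x)h'(x)>0$ and the boundedness claim, I would confine $h$ between the two roots. Having placed $h$ on the correct side of $\lambda$, I would show $h$ cannot cross the other root $\mu$ by a flow argument rather than a transversality one: while $\mu<h<\lambda$ the sign of $h'$ is constant, so $h$ is monotone there and, starting from $h>0>\mu$, cannot return to meet the negative curve $\mu$. Once $h$ is trapped between $\mu$ and $\lambda$ (or, in the cases where $h>\lambda$, stays beyond $\lambda$ with $h'$ of definite sign), the factorization gives the sign of $h'$ directly, and a short check in each of the four sub-cases shows it coincides with the sign of $\lambda'$, i.e.\ $\lambda'h'>0$. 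Boundedness on compact subsets of $(a,b)$ is then immediate, since $h$ is squeezed between the continuous functions $\mu$ and $\lambda$ (or is monotone and bounded on one side), so no escape to infinity inside the interval can occur.

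The main obstacle I anticipate is organizing the four sub-cases uniformly — the two signs of $c$ crossed with the two signs of $\lambda'$ — and being careful about the direction (toward increasing or decreasing $x$) in which the no-crossing argument is run, and about which endpoint supplies the initial sign. The transversality computation itself is routine; the care lies in matching the endpoint hypotheses $h'(a^+)\lambda'(a^+)>0$ (respectively at $b^-$) to the correct side of $\lambda$, and in ruling out escape across the negative root $\mu$ without assuming anything about the derivative of $\mu$.
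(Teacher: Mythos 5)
Your proposal is correct and follows essentially the same route as the paper: a no-crossing (barrier) argument at the positive nullcline $\lambda$, using that $h'(x_0)=0$ at any touching point while the established side of approach forces a contradiction with the sign of $\lambda'(x_0)$. Your explicit factorization $h'=c\,(h-\lambda)(h-\mu)$ and the separate argument confining $h$ away from the negative root $\mu$ merely make more explicit the sign analysis and the boundedness/monotonicity claims that the paper's proof treats briefly.
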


\begin{proof}
Let us consider the case $c(x)<0$, $h(a^+)>0$ and $\lambda (x)$ increasing. Then $h'(a^+)>0$, and
 because $c(x)<0$ this is only possible if $h(a^+)<\lambda (a^+)$ because  $h'(x)=a(x)+b(x)h(x)+c(x)h(x)^2$ 
 with $a(x)c(x)<0$. Now, $h(x)$ must be increasing
 unless a value $x_0$ is reached such that $h(x_0)=\lambda (x_0)$, which implies that $h'(x_0)=0$. 
 However this can not occur because 
 the graph of $h(x)$ lies below that of $\lambda (x)$ at the left end of the interval, and $\lambda (x)$ is increasing,
 which means
 that if $h(x_0)=\lambda (x_0)$ then necessarily $h'(x_0)<0$, in contradiction with the fact that $h'(x_0)=0$. 
 Because such $x_0$ does not exist we
 have $0<h(x)<\lambda (x)$ for all $x\in (a,b)$ and then $h'(x)>0$.
 
 The rest of cases follow similarly.
\end{proof}

\begin{remark}
The condition $a(x)c(x)<0$ is not 
 essential, but it simplifies the analysis because the characteristic equation 
 $a(x)+b(x)\lambda(x)+c(x)\lambda(x)^2=0$ has one negative and one positive real solution. For a 
 more general situation, see \cite{Segura:2012:OBF}. On the other hand, the fact that only
 the positive root has been considered is not a restriction, because it is
 always possible to consider the Riccati equation for $-h(x)$ instead of that for $h(x)$.
 \end{remark}

 It is also possible to extract bounds for other type of first order ODEs different from 
 Riccati equations. In particular, in 
 \cite{Segura:2021:MPF,Segura:2021:UVS} an analysis of the 
 solutions of equations $\phi(x)=P(x,f(\phi(x)))$, with $P(x,y)$ a third degree polynomial in $y$,
 $f$ a simple algebraic function and $\phi(x)$ the double ratio $\phi (x)=h_n (x)/h_{n+1}(x)$,
 is considered. The qualitative analysis is similar, but more involved given that we have
 three nullclines to be considered. Later we give more details on these methods when we 
 describe the parabolic cylinder functions and the modified Bessel functions.
 
 As commented before, most of the bounds that are available are related to a nullcline 
 of a first order ODE (as in Theorem \ref{second}). 
 In other cases, the use of an ODE satisfied by these ratios or related functions is also 
 helpful, and one can check if a given function is a 
 bound for one of the solutions by considering the next result. 

\begin{theorem}
\label{prime}
Let $P(x,y)$ be continuous in $(a,b)\times {\mathbb R}$ and $\phi(x)$ be a solution of the ODE 
$\phi'(x)=P(x,\phi(x))$ which is bounded in any compact subset of $(a,b)$. Let $\lambda (x)$ be differentiable in $(a,b)$. Denoting
$\delta (x)=\lambda (x)-\phi(x)$ and $\Delta (x)=\lambda'(x)-P(x,\lambda (x))$ we have that
\begin{enumerate}
\item{}If $\delta (a^+)\Delta(x)>0$ in $(a,b)$ then $\delta (x)\Delta(x)>0$ in $(a,b)$.
\item{}If $\delta (b^-)\Delta(x)<0$ in $(a,b)$ then $\delta (x)\Delta(x)<0$ in $(a,b)$.
\end{enumerate}
\end{theorem}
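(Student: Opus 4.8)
The plan is to run a barrier (sub-/supersolution) comparison built on a single observation: the graphs of $\lambda$ and $\phi$ can only meet at points where they feel the \emph{same} value of $P$. First I would unpack the hypotheses. Saying $\delta(a^+)\Delta(x)>0$ for every $x\in(a,b)$ forces $\Delta$ to carry the constant sign of the number $\delta(a^+)$ throughout the interval. By applying the argument to $-\phi$ and $-\lambda$ if necessary (these solve the analogous ODE with $P$ replaced by $-P(x,-\,\cdot\,)$, and the product $\delta\Delta$ is unchanged), I may assume without loss of generality that $\delta(a^+)>0$ and $\Delta(x)>0$ on $(a,b)$, so the target of (1) reduces to proving $\delta(x)>0$ there. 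Since $\phi$ solves $\phi'=P(x,\phi)$ it is differentiable, $\lambda$ is differentiable by hypothesis, and hence $\delta=\lambda-\phi$ is continuous on $(a,b)$ with $\delta'(x)=\lambda'(x)-P(x,\phi(x))$.

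The heart of the matter is a contact identity: at any $x_0$ with $\delta(x_0)=0$ we have $\phi(x_0)=\lambda(x_0)$, whence $P(x_0,\phi(x_0))=P(x_0,\lambda(x_0))$ and therefore $\delta'(x_0)=\lambda'(x_0)-P(x_0,\lambda(x_0))=\Delta(x_0)$. This is exactly what lets the argument dispense with any Lipschitz or uniqueness hypothesis on $P$: I only ever evaluate $P$ at a point where its two competing arguments already coincide, so no estimate of $P(x,\lambda)-P(x,\phi)$ away from contact is required.

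With this in hand I would argue by first contact. Put $x_0=\inf\{x\in(a,b):\delta(x)\le 0\}$; if this set is empty then $\delta>0$ everywhere and we are finished. Otherwise, because $\delta(a^+)>0$ there is a right-neighborhood of $a$ on which $\delta>0$, so $x_0>a$, and since the set is nonempty $x_0<b$; thus $x_0\in(a,b)$, $\delta>0$ on $(a,x_0)$, and continuity gives $\delta(x_0)=0$. Approaching $x_0$ from the left, the difference quotient $(\delta(x_0)-\delta(x))/(x_0-x)$ has nonpositive numerator and positive denominator, so $\delta'(x_0)\le 0$; but the contact identity says $\delta'(x_0)=\Delta(x_0)>0$, a contradiction. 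Hence $\delta>0$ on $(a,b)$, i.e.\ $\delta(x)\Delta(x)>0$, proving (1). Claim (2) is the mirror image: after the same normalization to $\Delta>0$ (now with $\delta(b^-)<0$) I take the last contact $x_0=\sup\{x:\delta(x)\ge 0\}$, use the right-hand difference quotient, and reach the identical contradiction.

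The step I expect to require the most care is not the crossing argument but the boundary and regularity bookkeeping that makes it legitimate on the whole open interval. I must confirm that the one-sided values $\delta(a^+)$ and $\delta(b^-)$ exist as the definite signs the statement implicitly attributes to them, that the extremal contact point $x_0$ lands strictly inside $(a,b)$ so that $\delta'(x_0)$ and $\Delta(x_0)$ are both meaningful, and—crucially—that $\phi$ (hence $\delta$) is a genuine continuous function across all of $(a,b)$ with no interior blow-up. This last point is exactly the role of the assumption that $\phi$ is bounded on every compact subset of $(a,b)$: for the ratio solutions we have in mind $\phi$ could otherwise develop poles that would derail the intermediate-value reasoning used to locate $x_0$. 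Once continuity on $(a,b)$ and the interiority of $x_0$ are secured, the contact identity does the rest.
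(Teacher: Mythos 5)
Your proposal is correct and follows essentially the same route as the paper's proof: a first-contact argument at the smallest zero $x_0$ of $\delta$, combined with the contact identity $\delta'(x_0)=\Delta(x_0)$ (valid because $P$ is evaluated at coinciding arguments), contradicting the sign of the left difference quotient. The only differences are cosmetic — your explicit WLOG normalization via $-\phi,-\lambda$ and the more careful boundary bookkeeping, which the paper leaves implicit.
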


\begin{proof}
Assume, for instance, that $\delta (a^+)>0$ and $\Delta (x)>0$ in $(a,b)$; the rest of cases follow similarly. 
With this we can prove that no $x_0\in (a,b)$ exists such that $\delta (x)=0$, and because  $\delta (a^+)>0$ this
implies that $\delta (x)>0$ in $(a,b)$. For proving this, we suppose that $x_0$ is the smallest value in $(a,b)$ 
for which $\delta(x_0)=0$  and we arrive at a contradiction, which proves 
that such $x_0$ does not exist. We observe that $\delta (x)>0$ in $(a,x_0)$ because $\phi(x)$ is a bounded
and continuous solution, and $x_0$ is the smallest value on $(a,b)$ 
for which $\delta(x_0)=0$.

Because $\delta'(x)=\lambda'(x)-\phi'(x)=\lambda'(x)-P(x,\phi(x))$, we have 
$\delta'(x_0)=\lambda'(x_0)-P(x_0,\phi(x_0))=\Delta (x_0)$, where the last equality holds because $\delta(x_0)=0$ and then 
$\phi (x_0)=\lambda (x_0)$. But because $\Delta (x_0)>0$, $\delta'(x_0)>0$, which contradicts the fact that
$\delta (x)>0$ in $(a,x_0)$.
\end{proof}

This is a useful result for proving that $\lambda (x)$ is a bound for a given solution of the ODE $\phi'(x)=P(x,\phi(x))$. 
The only information required of the solution $\phi(x)$ is its behavior either at $x=a^+$ or $x=b^-$, and the assumption on
continuity and boundedness. Observe that in Theorem \ref{second} we did not need to require boundedness, and that 
boundedness was a consequence.

All the bounds we will describe in this paper are a consequence of either an analysis of nullclines as in 
Theorem \ref{second} (maybe combined 
with the use of a recurrence relation), some variants for other types
of first order ODEs (as done in \cite{Segura:2021:UVS, Segura:2021:MPF}) or, in some cases where the bounds are 
not directly related to nullclines, as a consequence of Theorem \ref{prime}. 

With respect to the use of recurrence relations, we recall that the families of functions we will consider satisfy 
recurrence relations $y_{n+1}(x)+B_n (x) y_n(x)-C_n(x)y_{n-1}(x)=0$. Denoting $h_n (x)=y_n (x)/y_{n-1}(x)$ and 
applying the recurrence in the backward direction, 
$$
h_n (x)=\Frac{C_n(x)}{B_n(x)+h_{n+1}(x)},
$$
which under certain conditions allows us to obtain a bound for $h_n(x)$ using a bound in the right hand side for $h_{n+1}(x)$. 
For instance, if $C_n (x)$ and $B_n (x)$ are positive and $U_n(x)$ is an upper bound for $h_n(x)$ then $C_n (x)/(B_n (x)+U_{n+1}(x))$
is a lower bound for $h_n(x)$. We refer to \cite{Segura:2012:OBF} for a discussion on the use of forward and backward recursion for
obtaining bounds and its relation to the existence of a minimal solution for the recurrence. 

\section{Parabolic cylinder functions}

We present a brief account on bounds for the ratio 
\begin{equation}
\label{Pn}
\Phi_n (x)=U(n-1,x)/U(n,x), 
\end{equation}
where $U(n,x)$ is the Weber parabolic cylinder function, which is a recessive solution as $x\rightarrow 
+\infty$ of the second order ODE
$$
y''(x)-\left(\Frac{x^2}{4}+n\right)y(x)=0.
$$

\subsection{Sharp bounds from Riccati equations}

In this subsection we summarize the main results of \cite{Segura:2021:UVS}. As an illustration of the ideas
discussed in the previous section, we give some details for the analysis of the Riccati equation. 

The PCF $U(n,x)$ satisfies the following difference-differential system (see 12.8.2 and 12.8.3 of 
\cite{Temme:2010:PCF}):

\begin{equation}
\label{DDEP}
\begin{array}{l}
U'(n,x)=\Frac{x}{2}U(n,x)-U(n-1,x),\\
\\
U'(n-1,x)=-\Frac{x}{2}U(n-1,x)-(n-1/2)U(n,x).
\end{array}
\end{equation}

This system is the only information required to obtain the bounds, together with the fact that the function $\Phi_n(x)$
 is positive for all real $x$ when $n>1/2$ 
 and increasing for large $x$ , which is easy to check from the asymptotic expansions of $U(n,x)$
\cite[12.9.1]{Temme:2010:PCF}. Using that expansion  we see that as $x\rightarrow +\infty$ \footnote{We note an obvious 
erratum in \cite[Eq. (8)]{Segura:2021:UVS}: the exponent of the third term should be $-4$ and not $4$}, 
\begin{equation}
\label{u+}
\Phi_n (x)\sim x\left[1+(n+1/2)x^{-2}
-(n+1/2)(n+3/2)x^{-4}+{\mathcal O}(x^{-6})
\right],
\end{equation}
and the first term is enough to see that, indeed, $\Phi_n(x)$ is positive and increasing as $x\rightarrow +\infty$.

On the other hand,
as $x\rightarrow -\infty$ \footnote{Again, the exponent of the third term in \cite[Eq. (9)]{Segura:2021:UVS} 
should be $-4$ and not $4$},
\begin{equation}
\label{u-}
\Phi_n (x)\sim -\Frac{n-1/2}{x}\left[1-(n-3/2)x^{-2}
+2(n-3/2)(n-2)x^{-4}+{\mathcal O}(x^{-6})
\right].
\end{equation}
(notice that $\Phi_n (x)$ becomes negative for $n< 1/2$ and $x\rightarrow -\infty$).  

Using (\ref{DDEP}) we see that $\Phi_n (x)$ is one of the solutions of the Riccati equation
\begin{equation}
\label{ricpcf}
h'(x)=h(x)^2-x h (x)-(n-1/2),
\end{equation}
which for $n>1/2$ has the positive and increasing characteristic root $\lambda(x)=\frac12\left(x+\sqrt{x^2+4n-2}\right)$.
And because of (\ref{u+}) the hypothesis of Theorem \ref{second} are met, with $(a,b)=(-\infty,+\infty)$, $c(x)>0$ and $\lambda'(x)>0$,
which implies that $\Phi_n(x)>\lambda(x)$ for all real $x$. In addition, applying the recurrence relation in the backward and the
forward directions two additional (upper) bounds are obtained for $\Phi_n(x)$. We summarize those results in the following theorem. 
The second bound in the next theorem is obtained from the first bound and by applying the backward
recurrence
\begin{equation}
\label{back}
\Phi_n (x)=x+\Frac{n+\frac12}{\Phi_{n+1}(x)},
\end{equation}
and the third bound is a consequence of the forward recurrence
\begin{equation}
\label{for}
\Phi_n (x)=\Frac{n-\frac12}{-x+\Phi_{n-1}(x)}.
\end{equation}

\begin{theorem}
\label{oldbp}
The following bounds hold for all real $x$
$$
\Frac{U(n-1,x)}{U(n,x)}>B^{(2,1)}(x)=\Frac{1}{2}\left(x+\sqrt{x^2+4n-2}\right) \mbox{ for } n>1/2,
$$
$$
\Frac{U(n-1,x)}{U(n,x)}<B^{(1,2)}(x)=\Frac{1}{2}\left(x+\sqrt{x^2+4n+2}\right) \mbox{ for } n>-1/2,
$$
$$
\Frac{U(n-1,x)}{U(n,x)}<B^{(3,0)}(x)=\Frac{1}{2}\Frac{n-1/2}{n-3/2}\left(x+\sqrt{x^2+4n-6}\right) \mbox{ for } n>3/2.
$$
\end{theorem}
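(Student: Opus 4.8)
The plan is to derive the three bounds in sequence: the first directly from the Riccati nullcline of Theorem~\ref{second}, and the second and third by propagating the first through the two recurrences. For the first bound I would simply verify that case (2) of Theorem~\ref{second} applies to the Riccati equation~(\ref{ricpcf}). Here $c(x)=1>0$, and the positive characteristic root $\lambda(x)=\frac12\bigl(x+\sqrt{x^2+4n-2}\bigr)$ is real and positive for all real $x$ precisely when $4n-2>0$, i.e. $n>1/2$, and satisfies $\lambda'(x)>0$ there. The asymptotic relation~(\ref{u+}) shows that $\Phi_n(x)$ is positive and increasing as $x\to+\infty$, so at the right endpoint $b=+\infty$ one has $\Phi_n(b^-)>0$ and $\Phi_n'(b^-)\lambda'(b^-)>0$. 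Case (2) of Theorem~\ref{second} with $\lambda'(x)>0$ then gives $\Phi_n(x)>\lambda(x)=B^{(2,1)}(x)$.

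For the second bound I would apply this first bound with $n$ replaced by $n+1$, obtaining $\Phi_{n+1}(x)>\frac12\bigl(x+\sqrt{x^2+4n+2}\bigr)$, which is valid for $n+1>1/2$, i.e. $n>-1/2$, and substitute it into the backward recurrence~(\ref{back}). Since $n+\frac12>0$ and $\Phi_{n+1}(x)>0$, the right-hand side of~(\ref{back}) is a decreasing function of $\Phi_{n+1}$, so a lower bound for $\Phi_{n+1}$ yields an upper bound for $\Phi_n$. Writing $s=\sqrt{x^2+4n+2}$ and using $s^2-x^2=4n+2$, the resulting expression $x+(2n+1)/(x+s)$ telescopes to $\frac12(x+s)=B^{(1,2)}(x)$.

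The third bound follows the same pattern through the forward recurrence~(\ref{for}). Applying the first bound with $n$ replaced by $n-1$ gives $\Phi_{n-1}(x)>\frac12\bigl(x+\sqrt{x^2+4n-6}\bigr)$, valid for $n-1>1/2$, i.e. $n>3/2$. The point requiring care is the positivity of the denominator in~(\ref{for}): for $n>3/2$ one has $\sqrt{x^2+4n-6}>\abs{x}$, whence $-x+\Phi_{n-1}(x)>0$, so the map $\Phi_{n-1}\mapsto\Phi_n$ in~(\ref{for}) is again decreasing and the lower bound on $\Phi_{n-1}$ produces an upper bound on $\Phi_n$. Rationalizing with $t=\sqrt{x^2+4n-6}$ and the identity $t^2-x^2=4n-6=2(2n-3)$ then reproduces $B^{(3,0)}(x)$.

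I expect the only genuine obstacles to be the two algebraic simplifications: each relies on the square-root lower bound interacting with the recurrence so that $s^2-x^2$ (respectively $t^2-x^2$) cancels the linear factor in the numerator and restores a clean square-root form. The accompanying subtlety is the sign bookkeeping, namely ensuring that the denominators of~(\ref{back}) and~(\ref{for}) remain positive so that the recurrence maps are monotone decreasing; this is itself guaranteed by the first bound. Everything else is routine substitution.
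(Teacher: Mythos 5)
Your proposal is correct and follows exactly the paper's route: the first bound is case (2) of Theorem~\ref{second} applied to the Riccati equation~(\ref{ricpcf}) using the behaviour~(\ref{u+}) at $+\infty$, and the other two are obtained by feeding that bound (with $n$ shifted) into the backward recurrence~(\ref{back}) and the forward recurrence~(\ref{for}), respectively. The sign and monotonicity bookkeeping and the algebraic simplifications via $s^2-x^2=4n+2$ and $t^2-x^2=4n-6$ are all handled correctly.
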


The notation $B^{(m,n)}(x)$ for the bounds is analogous to that used for modified Bessel functions in 
\cite{Segura:2023:SBW}, and denotes the number of terms of the expansions of 
\begin{equation}
\label{B1}
B(\alpha,\beta,\gamma,x)=\alpha x+
\sqrt{\beta^2 x^2 +\gamma^2},\, \alpha>0,
\end{equation}
 which coincide with those of (\ref{u-}) 
as 
$x\rightarrow -\infty$ ($m$) 
and (\ref{u+}) as $x\rightarrow +\infty$ ($n$).  

Assuming $\beta>0$ we have
$$
B(\alpha,\beta,\gamma,x)=(\alpha\pm \beta)x\pm \Frac{\gamma^2}{2\beta x}\mp\Frac{\gamma^4}{8\beta^3 x^3}
\pm \Frac{\gamma^6}{16\beta^5 x^5}+{\mathcal O}(x^{-7}) \mbox{ as } x\rightarrow \pm \infty.
$$

Since in all the three bounds in the previous theorem $\alpha=\beta$, the first term in the previous expansion 
as $x\rightarrow -\infty$ is zero, coinciding with the expansion (\ref{u-}); therefore $m\ge 1$ for all of them. Also, we
can easily check that the first bound has an additional correct term as $x\rightarrow -\infty$ and another one as
$x\rightarrow +\infty$. Similarly, we  can check the accuracy for the other two bounds. We observe that with three parameters,
we have $m+n\le 3$. It appears that the bound with $m=0$, $n=3$ should be possible.

Indeed, it is possible to give a bound of the type (\ref{B1}) with higher accuracy as $x\rightarrow +\infty$, by choosing
$\alpha$, $\beta$ and $\gamma$ such that the first three terms in the expansions as $x\rightarrow +\infty$ of  $\Phi_n (x)$ and  $B(\alpha,\beta,\gamma,x)$ coincide. This gives the values $\alpha=\frac12 (n+5/2)/(n+3/2)$, $\beta=\frac12 (n+1/2)/(n+3/2)$ and $\gamma^2 =(n+1/2)^2/(n+3/2)$, and
one can prove that this is indeed a bound using Theorem \ref{prime}, though unsharp as $x\rightarrow -\infty$. 

\begin{theorem}
\label{newbp}
If $n>-1/2$ then for all real $x$
$$
\Frac{U(n-1,x)}{U(n,x)}>B^{(0,3)}(x)=\Frac{(n+5/2)x+(n+1/2)\sqrt{x^2+4n+6}}{2(n+3/2)}.
$$
\end{theorem}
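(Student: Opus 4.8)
The plan is to apply Theorem \ref{prime} to the Riccati equation \eqref{ricpcf}, taking $P(x,y)=y^2-xy-(n-1/2)$, $\phi(x)=\Phi_n(x)$, $\lambda(x)=B^{(0,3)}(x)$ and $(a,b)=(-\infty,+\infty)$. Since $U(n,x)$ does not vanish on the real line for $n>-1/2$, the ratio $\Phi_n$ is a continuous solution that is bounded on every compact subinterval, so the hypotheses on $\phi$ in Theorem \ref{prime} are met. To obtain the stated lower bound I must show that $\delta(x)=\lambda(x)-\Phi_n(x)<0$ for all real $x$; by part (2) of Theorem \ref{prime} it is enough to prove that $\Delta(x)=\lambda'(x)-P(x,\lambda(x))$ keeps a constant sign on $(-\infty,+\infty)$ and that $\delta(x)$ carries the opposite sign near $x=+\infty$. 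Throughout I abbreviate $m=n+3/2$, so that $n+1/2=m-1$, $n-1/2=m-2$, and (crucially) $4n+6=4m$.

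First I would record the algebraic form of the bound. Writing $\lambda(x)=\alpha x+\sqrt{\beta^2x^2+\gamma^2}$ with $\alpha=\frac12(m+1)/m$, $\beta=\frac12(m-1)/m$ and $\gamma^2=(m-1)^2/m$ reproduces the displayed $B^{(0,3)}$, and the identity $4n+6=4m$ gives the clean substitution $s:=\sqrt{\beta^2x^2+\gamma^2}=\beta\,w$, where $w=\sqrt{x^2+4m}$. The values of $\alpha,\beta,\gamma$ are precisely those making the first three terms of $\lambda(x)$ and $\Phi_n(x)$ agree as $x\to+\infty$, a fact I will exploit in the last step.

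Next comes the main computation, the evaluation of $\Delta$. Using $\lambda'=\alpha+\beta^2x/s$ and expanding $P(x,\lambda)=\lambda^2-x\lambda-(n-1/2)$, the chosen parameters force the polynomial part to collapse. Collecting terms one reaches $\Delta=\frac{m-1}{2m}\bigl[\frac{x^2}{m}-\frac{xw}{m}+\frac{x}{w}+1\bigr]$; then, using $x^2-w^2=-4m$ so that $x(x-w)=-4mx/(x+w)$, and combining over the common denominator $w(x+w)$, everything telescopes to
$$\Delta(x)=\frac{m-1}{2m}\,\frac{(x-w)^2}{w(x+w)}.$$
Since $w>0$ and $x+w=x+\sqrt{x^2+4m}>0$ for every real $x$, while $m-1=n+1/2>0$ and $(x-w)^2>0$ (equality would force $m=0$), I conclude $\Delta(x)>0$ on all of $(-\infty,+\infty)$. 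This global positivity is the heart of the argument and the step most likely to go wrong: the cancellation is special to these $\alpha,\beta,\gamma$, so the simplification must be carried out with care.

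It remains to fix the sign of $\delta$ near $+\infty$. Because the first three asymptotic terms agree, $\delta(x)=O(x^{-5})$, and I need the first non-vanishing coefficient. From the expansion of $B(\alpha,\beta,\gamma,x)$ quoted after Theorem \ref{oldbp}, the $x^{-5}$ coefficient of $\lambda$ is $\gamma^6/(16\beta^5)=2(m-1)m^2$. Inserting $\Phi_n(x)=x+c_1x^{-1}+c_3x^{-3}+c_5x^{-5}+\cdots$ into \eqref{ricpcf} and matching powers gives $c_1=m-1$, $c_3=-(m-1)m$ (confirming the three-term match) and, at order $x^{-4}$, $c_5=-c_3(3+2c_1)=(m-1)m(2m+1)$. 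Hence the $x^{-5}$ coefficient of $\delta$ is $2(m-1)m^2-(m-1)m(2m+1)=-(m-1)m<0$, so $\delta(x)\sim-(m-1)m\,x^{-5}<0$ near $+\infty$, i.e. $\delta(b^-)<0$. With $\Delta(x)>0$ throughout, the hypothesis $\delta(b^-)\Delta(x)<0$ of Theorem \ref{prime}(2) holds on $(-\infty,+\infty)$, yielding $\delta(x)\Delta(x)<0$ and therefore $\delta(x)<0$ for all real $x$, which is exactly the claimed inequality $\Phi_n(x)>B^{(0,3)}(x)$. The unsharpness as $x\to-\infty$ noted in the text is visible directly: there $\lambda(x)\sim x/m\to-\infty$ while $\Phi_n(x)\to0$, so the gap does not close on the left.
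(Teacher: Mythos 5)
Your proposal is correct and follows essentially the same route as the paper: apply Theorem \ref{prime} to the Riccati equation (\ref{ricpcf}), check that $\delta=\lambda-\Phi_n\sim -(n+1/2)(n+3/2)x^{-5}<0$ as $x\to+\infty$, and prove $\Delta(x)>0$ on all of $\mathbb{R}$. Your closed form $\Delta=\frac{n+1/2}{2n+3}\,(x-w)^2/\bigl(w(x+w)\bigr)$ with $w=\sqrt{x^2+4n+6}$ checks out and makes the global positivity immediate, whereas the paper establishes it by splitting into the cases $x\le 0$ and $x>0$.
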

\begin{proof}
We start by checking that, denoting 
$$\lambda_n (x)=\Frac{(n+5/2)x+(n+1/2)\sqrt{x^2+4n+6}}{2(n+3/2)},$$
 we have
$$
\lambda_n (x)-\Phi_n (x)=-\Frac{(n+1/2)(n+3/2)}{x^5}+{\mathcal O}(x^{-7}),\,x\rightarrow +\infty
$$

With this, and considering Theorem \ref{prime}, we only have to prove that $\Delta (x)>0$ for real $x$, where
$\Delta (x)=\lambda' (x)-P(x,\lambda (x))$, with $P(x,y)=y^2-xy-(n-1/2)$
(see (\ref{ricpcf})).

We have 
$$
\Delta (x)=\Frac{n+1/2}{(2n+3)^2}\left(2x^2+2n+3-\Frac{x(2x^2+6n+9)}{\sqrt{x^2+4n+6}}\right),
$$
and we see that $\Delta(x)>0$ if $n>-1/2$ and $x\le 0$.
After some elementary algebra we can also write
$$
\Delta (x)=\Frac{2(n+1/2)(2n+3)}{\sqrt{x^2+4n+6}\left((2 x^2+2n+2)\sqrt{x^2+4n+6}+x(2x^2+6n+9))\right)}
$$
and $\Delta (x)>0$ if $n>-1/2$ and $x>0$, which completes the proof.
\end{proof}

It is also possible to prove the previous bound in a more straightforward way 
starting from the second bound in Theorem \ref{oldbp} and applying the backward recurrence
(\ref{back}). We give the proof using Theorem \ref{prime} as an illustration of application
of this theorem.

The bound in Theorem \ref{newbp} is new, unlike the bounds in Theorem \ref{oldbp}, which were already discussed in 
\cite{Segura:2021:UVS}. We observe that the bound in Theorem \ref{newbp} becomes negative for $x<-(n+1/2)$, which is a clear indication
of the unsharpness as $x\rightarrow -\infty$. In any case, it is the best bound of the form (\ref{B1}) as
$x\rightarrow +\infty$, and it completes the set of best bounds in the same way that the set of best bounds for ratios 
of modified Bessel functions was completed in \cite{Segura:2023:SBW}, as we will also describe later in this paper. 

For the case of modified Bessel functions, 
uniparametric sets of bounds linking the bounds of type $B^{(0,3)}$ with those of type $B^{(2,1)}$ and the bounds $B^{(3,0)}$ with the $B^{(1,2)}$ bounds were given in \cite{Segura:2023:SBW}, as we will later summarize in Theorem \ref{horn}. It is an open question whether the same type of analysis is possible for parabolic cylinder functions. 
Similarly, it seems feasible that best bounds could be found with have an 
osculatory character, meaning that the graphs of the bounds and the ratio of parabolic cylinder functions would be tangent at a point. 

Subsequent applications of the recurrence are 
possible to obtain further bounds, but the resulting bounds become more complicated and are 
no longer of the form (\ref{B1}). For instance, considering the last bound
in Theorem \ref{oldbp} and applying a further step of forward recurrence (\ref{for}) we get
\begin{theorem}
Let $n>5/2$, the following bound holds for real $x$
$$
\Frac{U(n-1,x)}{U(n,x)}>B^{(4,0)}(x)=\Frac{2(n-1/2)(n-5/2)}{(n-3/2)\sqrt{x^2+4n-10}-(n-7/2)x}.
$$
\end{theorem}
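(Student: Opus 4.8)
The plan is to derive this bound directly from the third (upper) bound $B^{(3,0)}$ of Theorem~\ref{oldbp} by a single application of the forward recurrence~(\ref{for}), exactly as announced in the sentence preceding the statement. No new qualitative ODE analysis is needed: the only inputs are the positivity of $\Phi_n(x)$ and the recurrence itself.

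First I would shift the index in the third bound of Theorem~\ref{oldbp}. Replacing $n$ by $n-1$ there (which is legitimate precisely when $n-1>3/2$, i.e. $n>5/2$) gives an upper bound for the previous ratio, valid for all real $x$:
$$
\Phi_{n-1}(x)<\Frac{1}{2}\Frac{n-3/2}{n-5/2}\left(x+\sqrt{x^2+4n-10}\right)=:U_{n-1}(x).
$$
Next I would feed this into the forward recurrence (\ref{for}), namely $\Phi_n(x)=(n-1/2)/(-x+\Phi_{n-1}(x))$. Since $n>5/2$ we have $n-1/2>0$ and $\Phi_n(x)>0$, so the denominator $-x+\Phi_{n-1}(x)$ is positive; moreover $U_{n-1}(x)>\Phi_{n-1}(x)$ forces $-x+U_{n-1}(x)>-x+\Phi_{n-1}(x)>0$ as well. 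Because the map $t\mapsto (n-1/2)/(-x+t)$ is strictly decreasing on the set where $-x+t>0$, an upper bound for $\Phi_{n-1}(x)$ translates into a lower bound for $\Phi_n(x)$:
$$
\Phi_n(x)=\Frac{n-1/2}{-x+\Phi_{n-1}(x)}>\Frac{n-1/2}{-x+U_{n-1}(x)}.
$$

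It then remains to simplify the right-hand side into the stated closed form. Combining $-x$ with $U_{n-1}(x)$ over the common denominator $2(n-5/2)$, the coefficient of $x$ in the numerator collapses to $(n-3/2)-2(n-5/2)=-(n-7/2)$, so that
$$
-x+U_{n-1}(x)=\Frac{(n-3/2)\sqrt{x^2+4n-10}-(n-7/2)x}{2(n-5/2)},
$$
and substituting this back gives exactly $B^{(4,0)}(x)$. The main (and essentially only) obstacle is this bookkeeping step: keeping the signs straight so that the $x$-coefficient reduces to $-(n-7/2)$, and confirming that the inequality direction is preserved because both denominators remain positive for $n>5/2$. There is no genuine analytic difficulty here, since the recursion does all the work; alternatively one could verify the result through Theorem~\ref{prime}, as was done for Theorem~\ref{newbp}, but the recurrence route is the most direct.
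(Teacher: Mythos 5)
Your proof is correct and follows exactly the route the paper indicates: shifting the $B^{(3,0)}$ bound of Theorem \ref{oldbp} to index $n-1$ and inserting it into the forward recurrence (\ref{for}), with the sign of the denominator controlled by the positivity of $\Phi_n(x)$. The algebraic simplification to $B^{(4,0)}(x)$ and the range $n>5/2$ both check out, so nothing is missing.
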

This bound has the same three first terms as the expansion (\ref{u-}), but it is unsharp
as $x\rightarrow +\infty$.

We notice that all the bounds given so far for parabolic cylinder functions are a consequence
of the first bound in Theorem \ref{oldbp}, which was obtained from the Riccati equation, and the
application of the recurrence relation. The backward recurrence improves
the accuracy at $+\infty$ but worsens it at $-\infty$; the opposite occurs with 
the forward recurrence. The only bound which is sharp at $\pm \infty$ is the 
first bound in Theorem \ref{oldbp}. 
For obtaining bounds with higher accuracy both as $x\rightarrow -\infty$ and $x\rightarrow +\infty$,
a different approach should be considered.

\subsection{Beyond the Riccati bounds}
\label{beric}

Riccati equations are not the only possibility of obtaining sharp bounds for ratios of parabolic cylinder functions
and other functions of hypergeometric type. As we see next, it is possible to use other differential equations which  
can give even sharper bounds. The possibility considered in \cite{Segura:2021:UVS} is to analyze the first order 
differential equation satisfied by the double ratio $\Phi_n(x)/\Phi_{n+1}(x)$. The rationale behind this approach is the fact
that the double ratio has a slower variation than the simple ratio, and this facilitates finding more accurate bounds. Similar
ideas were also considered for modified Bessel functions, as we later describe.

In \cite{Segura:2021:UVS} it was shown that the function
$$
W_n (x)=\left(n+\frac12\right)\Frac{\Phi_n (x)}{\Phi_{n+1}(x)}
$$
satisfies the differential equation
\begin{equation}
\label{ri2}
W_n' (x)=2\left(\phi_n(x)^2-V_n(x)\phi_n (x)-\Frac{x}{4}\right),\,V_n(x)=\Frac{x^2}{4}+n
\end{equation}
where 
$$
\phi_n (x)=\Phi_n (x)-\Frac{x}{2}=\displaystyle\sqrt{\Frac{x^2}{4}+W_n(x)}.
$$

An analysis similar to that carried out for Riccati equations in (\ref{second}) can be considered 
for this differential equation. The analysis is more involved because solutions of a
third degree equation have to be considered and we need to prove the monotonicity of some 
functions related to the roots, 
particularly for the largest root \cite[Lemma 5]{Segura:2021:UVS}. For details we refer to \cite{Segura:2021:UVS}; we just recall 
Lemma 6 of that reference, and its consequence for the bounds.

\begin{lemma}
\label{lemmaprin}
Let $y(x)$ satisfy the differential equation
\begin{equation}
\label{ypz}
y'(x)= 2\left(z(x)^3-\left(\Frac{x^2}{4}+n\right)z(x)-\Frac{x}{4}\right),\,n>1/2
\end{equation}
where
\begin{equation}
\label{yz}
z(x)=+\sqrt{\frac{x^2}{4}+y(x)}.
\end{equation}
If $y(x)$ is positive and increasing as $x\rightarrow +\infty$, then 
$$
z(x)>\lambda_n^+(x)=f_n(x) \cos\left(
\frac13\arccos\left(\Frac{x}{f_n(x)^3}\right)\right),\,f_n(x)=\sqrt{\Frac{x^2+4n}{3}}
,
$$
$$
y(x)>\lambda_n^+(x)^2-\Frac{x^2}{4}
$$
and $y'(x)>0$ for all real $x$.
\end{lemma}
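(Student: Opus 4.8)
The plan is to apply Theorem \ref{prime} to the equation $y'(x)=P(x,y)$ with $P(x,y)=2(z^3-V_n z-x/4)$, $z=\sqrt{x^2/4+y}$ and $V_n=x^2/4+n$, taking as candidate bound the nullcline associated with the largest root. First I would record the algebraic meaning of $\lambda_n^+$: the trigonometric expression in the statement is exactly the largest root of the cubic $g(z)=z^3-V_n z-x/4=0$ given by the trigonometric (Vi\`ete) formula for a cubic with three real roots, since one checks $2\sqrt{V_n/3}=f_n$ and that the argument of the outer cosine equals $\frac13\arccos(x/f_n^3)$. Because $g$ is the right-hand side of the $y$-equation up to the factor $2$, and $z$ is an increasing function of $y$, the three assertions $z>\lambda_n^+$, $y>\lambda(x):=(\lambda_n^+)^2-x^2/4$, and $y'=2g(z)>0$ are equivalent; so it suffices to prove $y>\lambda$ on $\mathbb{R}$, with $\lambda$ the $y$-nullcline coming from the largest root.

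Next I would supply the two ingredients for case (2) of Theorem \ref{prime}, with $\delta=\lambda-y$ and $\Delta=\lambda'-P(x,\lambda)$. Since $\lambda$ is a nullcline, $P(x,\lambda)=0$ and hence $\Delta=\lambda'$. To fix the sign at $b^-=+\infty$, note that for $x>0$ the product of the three roots of $g$ equals $x/4>0$ while their sum is $0$, so $g$ has exactly one positive root, namely $\lambda_n^+$; consequently $y$ positive and increasing at $+\infty$ forces $g(z)>0$ with $z>0$, i.e. $z>\lambda_n^+$ and $\delta(+\infty)<0$. That all three roots are real for $n>1/2$ I would justify by the discriminant inequality $(x^2+4n)^3\ge 27x^2$, whose minimum over $x$ equals $54(2n-1)>0$.

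The crux is to show $\Delta=\lambda'>0$ for all real $x$. Differentiating the defining cubic gives $(\lambda_n^+)'=(\frac{x}{2}\lambda_n^+ +\frac14)/(3(\lambda_n^+)^2-V_n)$, and substituting $V_n=(\lambda_n^+)^2-x/(4\lambda_n^+)$ from the cubic reduces $\lambda'=2\lambda_n^+(\lambda_n^+)'-x/2$ to $\lambda'=(4(\lambda_n^+)^2-x^2)/(8\lambda_n^+(3(\lambda_n^+)^2-V_n))$. The denominator is positive because $\lambda_n^+>0$ and $g'(\lambda_n^+)=3(\lambda_n^+)^2-V_n>0$ at the largest root, so $\lambda'>0$ is equivalent to $\lambda_n^+>|x|/2$. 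This last inequality I would obtain directly by evaluating $g(|x|/2)=-\frac{n|x|}{2}-\frac{x}{4}$: for $x>0$ it equals $-x(2n+1)/4<0$, for $x<0$ it equals $x(2n-1)/4<0$ since $n>1/2$, and for $x=0$ one has $\lambda_n^+=\sqrt n>0$; in every case $|x|/2$ lies below the largest root. With $\Delta=\lambda'>0$ and $\delta(+\infty)<0$ the hypothesis $\delta(b^-)\Delta(x)<0$ holds throughout, so Theorem \ref{prime} gives $\delta\Delta<0$, hence $\delta<0$, i.e. $y>\lambda$ on $\mathbb{R}$; then $z>\lambda_n^+$ and $y'=2g(z)>0$ follow at once.

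I expect the monotonicity step $\lambda'>0$ (equivalently $\lambda_n^+>|x|/2$) to be the main obstacle, since this is where the cubic structure and the hypothesis $n>1/2$ genuinely enter; it is the role played by the separate monotonicity lemma for the largest root in \cite{Segura:2021:UVS}. A secondary point requiring care is that Theorem \ref{prime} presupposes that $\phi=y$ is continuous and bounded on compact subsets of $\mathbb{R}$, which I would note holds because $y$ is a global solution with $z=\sqrt{x^2/4+y}$ finite and positive. Finally, I would emphasize that since $z$ itself satisfies the Riccati equation $z'=z^2-V_n$, whose nullcline is $\sqrt{V_n}$, the sharper bound $\lambda_n^+$ is \emph{not} a Riccati nullcline, which is precisely why Theorem \ref{prime} rather than Theorem \ref{second} is the appropriate tool here.
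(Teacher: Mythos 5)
Your argument is correct and follows essentially the strategy the paper attributes to \cite{Segura:2021:UVS}: identify $\lambda_n^+(x)$ as the largest root of the cubic nullcline via the trigonometric formula, establish its monotonicity (your reduction of $\lambda'>0$ to $\lambda_n^+>|x|/2$ and the evaluation $g(|x|/2)<0$ is precisely the role of the separate monotonicity lemma for the largest root cited there), fix the sign of $\delta$ at $+\infty$ from the positivity and growth of $y$, and close with the no-crossing argument of Theorem \ref{prime}. Since the paper itself gives no proof of this lemma but defers to Lemma 6 of that reference, your self-contained write-up is a faithful reconstruction of the same approach and I see no gap in it.
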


From the expansion (\ref{u+}) it is easy to check that $W_n(x)$ satisfies the hypothesis for
$y(x)$ in the previous theorem, from which 
lower bounds for $W_n (x)$ and $\Phi_n (x)$ are derived.
In addition, the monotonicity of $W_n (x)$ is also proved. 

The following result gives the trigonometric bound for $\Phi_n(x)$ that stems from the previous theorem plus
an additional algebraic bound with similar accuracy that is obtained using similar ideas as in Theorem \ref{prime}.

\begin{theorem}
\label{trip}
The following holds for any real $x$ and $n>1/2$
$$
\Frac{U(n-1,x)}{U(n,x)}-\Frac{x}{2}>f_n(x) \cos\left(
\frac13\arccos\left(\Frac{x}{f_n(x)^3}\right)\right)>\displaystyle\sqrt{\Frac{x^2}{4}+g_n (x)},
$$
where $f_n(x)=\sqrt{\Frac{x^2+4n}{3}}$, $g_n(x)=\left(n+\frac12\right)\Frac{x+\sqrt{x^2+4n-2}}{x+\sqrt{x^2+4n+2}}$.
\end{theorem}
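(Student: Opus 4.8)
The plan is to prove the two inequalities in the chain $\Phi_n(x)-\frac{x}{2}>\lambda_n^+(x)>\sqrt{\frac{x^2}{4}+g_n(x)}$ separately, where $\lambda_n^+(x)=f_n(x)\cos(\frac13\arccos(x/f_n(x)^3))$. The first inequality follows directly from Lemma \ref{lemmaprin} applied to the actual solution. The second is a purely algebraic comparison between the largest root of a cubic and an explicit function, so the solution plays no role there and it reduces to a scalar inequality in $x$ and $n$.

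For the first inequality I would apply Lemma \ref{lemmaprin} with $y(x)=W_n(x)$ and $z(x)=\phi_n(x)=\Phi_n(x)-\frac{x}{2}$. With these identifications the relation $\phi_n=\sqrt{x^2/4+W_n}$ is exactly (\ref{yz}), and $W_n$ satisfies (\ref{ypz}) (this is (\ref{ri2})). It then remains to verify the two hypotheses on $y=W_n$. Positivity is immediate from $W_n=(n+\frac12)\Phi_n/\Phi_{n+1}$ together with $\Phi_n>0$ and $\Phi_{n+1}>0$ for $n>1/2$. That $W_n$ is increasing as $x\to+\infty$ follows from the expansion (\ref{u+}), which gives $W_n(x)=(n+\frac12)(1-x^{-2}+\cdots)$ and hence $W_n'(x)>0$ for large $x$. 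Lemma \ref{lemmaprin} then yields $\phi_n(x)=z(x)>\lambda_n^+(x)$ for all real $x$, which is the first inequality.

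For the second inequality I would first note that $\lambda_n^+(x)$ is, by Vi\`ete's trigonometric formula, the largest real root of the depressed cubic $P(z)=z^3-(\frac{x^2}{4}+n)z-\frac{x}{4}$, i.e. the nullcline of (\ref{ypz}); for $n>1/2$ one checks $|x/f_n(x)^3|<1$ (the minimum of $(x^2+4n)^3/(27x^2)$, attained at $x^2=2n$, equals $4n^2>1$, so $27x^2<(x^2+4n)^3$ for all $x$), whence $P$ has three distinct real roots and the formula applies. Setting $\mu(x)=\sqrt{\frac{x^2}{4}+g_n(x)}$ and using $\mu^2=\frac{x^2}{4}+g_n$, a one-line computation gives
\[
P(\mu)=\mu^3-\Big(\frac{x^2}{4}+n\Big)\mu-\frac{x}{4}=\mu\,(g_n-n)-\frac{x}{4}.
\]
The critical points of $P$ are $z=\pm f_n/2$, with $z=+f_n/2$ the local minimum, so the largest root obeys $\lambda_n^+>f_n/2$. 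Consequently, if $\mu\le f_n/2$ then trivially $\mu<\lambda_n^+$; and if $\mu>f_n/2$, then since $P$ is strictly increasing on $[f_n/2,\infty)$ and vanishes at $\lambda_n^+$, the desired $\mu<\lambda_n^+$ is equivalent to $P(\mu)<0$, i.e. to $\mu\,(g_n-n)<\frac{x}{4}$. This comparison of $\mu$ with the root/nullcline is morally the same device underlying Theorem \ref{prime}.

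The main obstacle is this last scalar inequality. It is delicate because $g_n-n$ changes sign: writing $A=\sqrt{x^2+4n-2}$ and $B=\sqrt{x^2+4n+2}$ one has $g_n-n=(\frac{x}{2}+(n+\frac12)A-nB)/(x+B)$, which is positive for large $x>0$ and negative for large $x<0$, while the two sides of $\mu(g_n-n)<\frac{x}{4}$ agree to leading order as $x\to+\infty$, so no crude estimate suffices. I would treat the sign of $g_n-n$ in cases, isolate and square to remove the surd in $\mu$, and then rationalize $A$ and $B$ using $B^2-A^2=4$; after this elementary but lengthy reduction the claim collapses to a polynomial inequality in $x$ (with parameter $n$) that holds for $n>1/2$, in the same spirit as the closing computation in the proof of Theorem \ref{newbp}. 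This establishes $\lambda_n^+>\mu$ and completes the chain.
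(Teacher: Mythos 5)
Your treatment of the first inequality is exactly the paper's: apply Lemma \ref{lemmaprin} with $y=W_n$ and $z=\phi_n=\Phi_n-\frac{x}{2}$, checking positivity and eventual monotonicity of $W_n$ from (\ref{u+}) (and you are right to read the exponent in (\ref{ri2}) as $\phi_n^3$ so that it matches (\ref{ypz})). Your setup for the second inequality is also sound as far as it goes: $\lambda_n^+$ is indeed the largest root of $P(z)=z^3-(\frac{x^2}{4}+n)z-\frac{x}{4}$, the three-real-roots condition $27x^2<(x^2+4n)^3$ does hold for $n>1/2$ with minimum $4n^2$ at $x^2=2n$, and the reduction of $\mu<\lambda_n^+$ to $P(\mu)<0$, i.e.\ to $\mu\,(g_n-n)<\frac{x}{4}$ when $\mu>f_n/2$, is correct.

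The gap is that this last scalar inequality --- which is the entire mathematical content of the algebraic bound --- is never actually proved. You yourself call it ``the main obstacle'' and then dispose of it with the assertion that ``after this elementary but lengthy reduction the claim collapses to a polynomial inequality \dots\ that holds for $n>1/2$,'' without exhibiting the polynomial or verifying its sign. This cannot be waved through: since $g_n\to n\pm\frac12$ and $\mu\sim\lvert x\rvert/2$ as $x\to\pm\infty$, the two sides of $\mu\,(g_n-n)<\frac{x}{4}$ agree to leading order (and beyond) at both infinities --- numerically the margin at $x=10$, $n=1$ is about $1.4\cdot 10^{-3}$, and for $n$ near $\frac12$ and $x=-10$ it is of order $10^{-5}$ --- so after the case split on the sign of $g_n-n$ and the squarings the resulting polynomial inequality is degenerate at top order in $x$, and its positivity for all real $x$ and all $n>1/2$ is precisely what must be checked. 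Note also that the paper reaches the algebraic bound by a different (likewise only sketched) route, namely the comparison argument of Theorem \ref{prime} applied to the ODE (\ref{ypz}) with candidate $g_n$, deferring the computation to the cited reference; either route is viable, but as written your proposal stops exactly where the real work of the second inequality begins.
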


Both the trigonometric and the algebraic bounds are very sharp as $x\rightarrow \pm \infty$; the first three terms in the expansion 
(\ref{u+}) are reproduced, and the two first terms in (\ref{u-}). With the notation $B^{(n,m)}$ used before, these are $B^{(3,3)}$ bounds (recall that we considered $n\ge 1$ if the bound is ${\mathcal O}(x^{-1})$ 
as $x\rightarrow -\infty$, and then two correct terms means $n=3$). They are also very sharp as $n\rightarrow +\infty$, see
\cite{Segura:2021:UVS}.

The forward and backward recurrences can again be considered. Starting with 
the bounds in Theorem \ref{trip} we get the upper bound $B^{(2,4)}$ by using the backward recurrence, while
the forward recurrence gives the bound $B^{(4,2)}$.

The monotonicity of the double ratio $W_n (x)$ was
earlier proved in \cite{Koch:2020:UBA} with a more indirect probabilistic approach. 
The original motivation of 
\cite{Segura:2021:UVS} was to prove that property by a direct method, but very sharp bounds were also obtained
as a consequence.
We end this section formulating a conjecture that generalizes the property of 
monotonicity of the double ratio $W_n(x)$:

\begin{conjecture}
Let $R_n^{[1]} (x)=U(n-1,x)/U(n,x)$, $n>1/2$, and define $R_n^{[k+1]}=R_{n}^{[k]}(x)/ R_{n+1}^{[k]}(x)$, then
 the functions $R_{n}^{[k]}(x)$ are positive increasing functions of $x$ with
$R_n^{[k+1]}(x)>R_n^{[k]}(x)$. $R_{n}^{[k]}(x)<1$ if $k\ge 2$.
\end{conjecture}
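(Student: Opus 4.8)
The plan is to prove the conjecture by induction on $k$, but organized so that all four assertions collapse onto two complete--monotonicity statements in the index $n$. Write $L(n,x)=\log\Phi_n(x)$ and let $\Delta$ denote the forward difference in $n$, $(\Delta f)(n)=f(n+1)-f(n)$. Since $R_n^{[k+1]}=R_n^{[k]}/R_{n+1}^{[k]}$, taking logarithms and iterating gives the telescoping identity $\log R_n^{[k]}=(-\Delta)^{k-1}L(n,x)$, which I would verify first by a one--line induction on $k$. From it the four claims become sign conditions on finite differences: positivity of $R_n^{[k]}$ is immediate from $\Phi_n>0$; monotonicity of $R_n^{[k]}$ in $x$ reads $(-1)^{k-1}\Delta^{k-1}\bigl(\partial_x L\bigr)>0$; the bound $R_n^{[k]}<1$ for $k\ge2$ reads $(-1)^{k-1}\Delta^{k-1}L<0$; and $R_n^{[k+1]}>R_n^{[k]}$ for $k\ge2$ is equivalent to $R_{n+1}^{[k]}<1$, hence to the same condition shifted in $n$. (For $k=1$ the comparison is in fact reversed, since $R_n^{[2]}=\Phi_n/\Phi_{n+1}\to1$ while $R_n^{[1]}=\Phi_n\to\infty$ as $x\to+\infty$; the increasing chain genuinely begins at $k=2$, and I would state it accordingly.)

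Collecting these, the conjecture reduces to two statements: (CM1) for each fixed real $x$ the sequence $n\mapsto(\Delta L)(n,x)$ is completely monotone, i.e. $(-1)^{j-1}\Delta^{j}L>0$ for all $j\ge1$; and (CM2) for each fixed real $x$ the sequence $n\mapsto \Phi_n'(x)/\Phi_n(x)=\partial_x L(n,x)$ is completely monotone, i.e. $(-1)^{j}\Delta^{j}(\partial_xL)\ge0$ for all $j\ge0$. Indeed (CM1) yields simultaneously that each $R_n^{[k]}$ is increasing in $n$, that $R_n^{[k]}<1$ for $k\ge2$, and (via the shift) that $R_n^{[k+1]}>R_n^{[k]}$ for $k\ge2$, while (CM2) yields that each $R_n^{[k]}$ is increasing in $x$. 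The point of this reformulation is that the naive induction on $k$ does not close: passing from level $k$ to level $k+1$ requires concavity in $n$, and then higher convexity, of $\log R_n^{[k]}$, so one is forced to carry the whole tower of finite--difference signs as the induction hypothesis, which is exactly what (CM1)--(CM2) package.

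The base cases are the $j=1$ instances, which are the $k=2$ situation: (CM2) at $j=1$ is the monotonicity in $x$ of $W_n=(n+\tfrac12)R_n^{[2]}$ established in \cite{Segura:2021:UVS,Koch:2020:UBA} (recalled here in Lemma \ref{lemmaprin} and Theorem \ref{trip}), while (CM1) at $j=1$ is the bound $R_n^{[2]}<1$, i.e. the Turán-type inequality $U(n,x)^2>U(n-1,x)U(n+1,x)$; the $k=1$ facts $\Phi_n>0$ and $\Phi_n'>0$ come from Theorem \ref{oldbp} and Theorem \ref{second}. To reach all orders $j$ I would try three complementary routes. The most promising is to find an integral or probabilistic representation in which complete monotonicity in $n$ is manifest: if $\partial_x L(n,x)$ and $\Delta L(n,x)$ admit representations $\int_0^\infty e^{-ns}\,d\mu_x(s)$ with positive $\mu_x$ (equivalently, are Hausdorff moment sequences in the unit shift $n\mapsto n+1$), then (CM1)--(CM2) are automatic; the probabilistic derivation of the $k=2$ case in \cite{Koch:2020:UBA} strongly suggests such a representation exists. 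A second route exploits the continued fraction produced by the backward recurrence (\ref{back}), $\Phi_n(x)=x+(n+1/2)/\Phi_{n+1}(x)$, whose positive partial numerators place $\Phi_n$ in the Stieltjes class, so that standard $S$-fraction theory should furnish the moment representations from which the required complete monotonicity of $\Delta L$ and $\partial_x L$ is extracted. A third, self--contained route is to iterate the qualitative analysis of \S\ref{beric}: derive for each level the first order ODE satisfied by $R_n^{[k]}$, exactly as (\ref{ri2}) was derived for $W_n$, and apply Theorem \ref{prime} level by level.

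The main obstacle is controlling \emph{all} the finite differences uniformly in their order $j$, not just the first one or two. The ODE method of Theorems \ref{second} and \ref{prime} handles a single ratio through the nullclines of one first order equation, but at level $k$ the governing polynomial has degree growing with $k$ (already cubic at $k=2$, cf. (\ref{ri2})), so the nullclines and the monotonicity arguments needed for the largest root proliferate and do not obviously stabilize. Equivalently, in the moment formulation one must produce the representing measures $\mu_x$ for every $x\in{\mathbb R}$ and prove their positivity uniformly, including across $x=0$, where the expansions (\ref{u+}) and (\ref{u-}) have a different character. Establishing complete monotonicity in $n$ in closed form, most plausibly through one explicit integral representation valid for all real $x$, is therefore the crux, and it is precisely this step that keeps the statement at the level of a conjecture.
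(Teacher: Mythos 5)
The statement you are treating is posed in the paper as a conjecture, and the paper gives no proof of it --- only the asymptotic remarks that follow it and an appeal to numerical experiments --- so there is no argument of the author's to compare yours against. Your reformulation is nonetheless correct and genuinely useful: the identity $\log R_n^{[k]}=(-\Delta)^{k-1}\log\Phi_n(x)$ follows by a one-line induction, and it faithfully converts positivity, monotonicity in $x$, the bound $R_n^{[k]}<1$ for $k\ge 2$, and the comparison $R_n^{[k+1]}>R_n^{[k]}$ into your two complete-monotonicity statements (CM1)--(CM2). Your observation about $k=1$ is also right and amounts to a correction of the literal statement: by (\ref{u+}) one has $R_n^{[1]}(x)=\Phi_n(x)\to+\infty$ while $R_n^{[2]}(x)\to 1$ as $x\to+\infty$, so $R_n^{[2]}(x)>R_n^{[1]}(x)$ fails for large $x$ and the increasing chain can only start at $k=2$.

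However, what you have written is not a proof, and you say so yourself. The $j=1$ base cases of (CM1) and (CM2) are indeed the known facts (positivity and monotonicity of $\Phi_n$, monotonicity of $W_n$ from Lemma \ref{lemmaprin}, and the Tur\'an-type inequality $U(n,x)^2>U(n-1,x)U(n+1,x)$), but every order $j\ge 2$ --- which is the entire content of the conjecture beyond $k=2$ --- rests on three unexecuted suggestions: an integral representation with positive measure that you do not exhibit; an appeal to $S$-fraction theory for the continued fraction generated by (\ref{back}), whose hypotheses you do not verify for $x<0$ and which in any case would naturally produce Stieltjes-type representations in the variable $x$ rather than the complete monotonicity in the index $n$ that (CM1)--(CM2) require; and an iteration of the ODE analysis of Section \ref{beric}, which you yourself note does not stabilize because the degree of the governing polynomial grows with $k$. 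The concrete gap is therefore the whole tower $j\ge 2$ of finite-difference sign conditions. Your proposal should be read as a correct reduction of the conjecture to two sharper conjectures, together with a needed restriction of its range of validity to $k\ge 2$ in the comparison claim, not as a proof.
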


The ratios $R_{n}^{[k]}(x)<1$  have a sigmoidal shape for $k\ge 2$. From (\ref{u+}) we
see that $\displaystyle\lim_{x\rightarrow +\infty}R_{n}^{[k]}(x)=1$ and numerical experiments
seem to suggest that $\displaystyle\lim_{k\rightarrow +\infty}R_{n}^{[k]}(x)=1$ for $n>1/2$, 
but not close to $n=1/2$ (observe that (\ref{u-}) indicates that $R_{1/2}^{[k]}(-\infty)=0$).

\section{Modified Bessel functions}

Modified Bessel functions are, without any doubt, the functions of hypergeometric type 
for which the analysis of the bounds and monotonicity properties for the ratios of these
functions have been more deeply studied (see \cite{Amos:1974:COM,Simpson:1984:SMR,
Yuan:2000:OTB,Baricz:2009:OAP,
Laforgia:2010:SIF,Segura:2011:BFR,Hornik:2013:ABF,Ruiz:2016:ANT,Yang:2018:MOF,
Segura:2021:MPF,
Segura:2023:SBW}). This is not surprising, given the huge amount of applications where these
ratios appear (see, for example, the applications cited in 
\cite{Segura:2011:BFR,Segura:2023:SBW}). In most of the papers (with the exception of \cite{Ruiz:2016:ANT,Segura:2021:MPF}) 
the bounds are of the form 
\begin{equation}
\label{type}
B(\alpha,\beta,\gamma,x)=\Frac{\alpha+\sqrt{\beta^2+\gamma^2 x^2}}{x}.
\end{equation}
These bounds 
are widely used because they can be quite sharp, they are simple and it is easy to operate
with them. In \cite{Segura:2023:SBW}, the analysis of these type of bounds was concluded,  and the best possible
bounds of this form were characterized and classified.

As for the rest of cases discussed in this paper, the main piece of information in our analysis is 
the difference-differential system 
\cite[10.29.2]{Olver:2010:BF}
\begin{equation}
\label{DDE}
\begin{array}{l}
\cali'_{\nu}(x)=\cali_{\nu-1}(x)-\Frac{\nu}{x}\cali_{\nu}(x),\\
\\
\cali'_{\nu-1}(x)=\cali_{\nu}(x)+\Frac{\nu -1}{x}\cali_{\nu-1}(x)
\end{array}
\end{equation}
(where $\cali_{\nu}(x)$ denotes $I_{\nu}(x)$, $e^{i \pi \nu} K_{\nu}(x)$ or any linear
 combination of them),  together 
with the unique behavior of  $I_{\nu}(x)$ as $x\rightarrow 0^+$ and of
$K_{\nu}(x)$ as $x\rightarrow +\infty$. 

In the next section, we briefly summarize the main results given in \cite{Segura:2023:SBW}. The techniques employed are 
similar to the ideas of Theorems \ref{second} and \ref{prime}. After this, we summarize other types of bounds with
higher accuracy (but not so simple), both of algebraic and trigonometric type.

\subsection{Best bounds of the type 
{\boldmath $B(\alpha,\beta,\gamma,x)=(\alpha+\sqrt{\beta^2+\gamma^2 x^2})/x$}}

One of the main results proved in \cite{Segura:2023:SBW} is that if $\alpha$, 
$\beta$ and $\gamma$ are chosen
such that $B(\alpha,\beta,\gamma,x)=(\alpha+\sqrt{\beta^2+\gamma^2 x^2})/x$ is a sharp approximation for $\Phi_{\nu}(x)=I_{\nu-1} (x)/I_{\nu}(x)$ 
as $x\rightarrow 0^+$
 (respectively $x\rightarrow +\infty$) and the graphs of the functions $B(\alpha,\beta,\gamma,x)$ and 
$\Phi_{\nu}(x)$ are tangent at some $x=x_*>0$, then $B(\alpha,\beta,\gamma,x)$ is an upper (respectively lower) 
bound for $\Phi_{\nu}(x)$; the same is true for the ratio $\Phi_{\nu}(x)=K_{\nu+1} (x)/K_{\nu}(x)$ 
but interchanging lower and upper bounds. This provides the best possible bounds of the form 
$B(\alpha,\beta,\gamma,x)$ around any chosen value $x_*$. 

There is no explicit expression for all 
the coefficients $\alpha$, $\beta$ and $\gamma$ for the best bounds, except in the limits $x_*\rightarrow +\infty$ and 
$x_*\rightarrow 0$, when they give the best possible bounds at $x=0$ and/or $x=+\infty$.
The best possible bounds at $x=0$ and/or $x=+\infty$ are therefore explicitly known, as
 described in \cite{Segura:2023:SBW}.

These best bounds at $x=0$ and/or $x=+\infty$ are particular cases of the four parametric 
bounds given in \cite{Segura:2023:SBW} for $I_{\nu-1}(x)/I_\nu (x)$ and $K_{\nu+1}(x)/K_{\nu}(x)$. These are the most
accurate known bounds of the type $B(\alpha,\beta,\gamma,x)$ with explicit formulas; they are close to the best bounds 
described above, and they contain as particular cases the best bounds at $x=0,+\infty$. We next summarize
these four parametric bounds (upper and lower bounds both for $I_{\nu-1}(x)/I_\nu (x)$ and $K_{\nu+1}(x)/K_{\nu}(x)$).
The first of these four theorems was already given in \cite{Hornik:2013:ABF} in a different form; 
the other three are given in \cite{Segura:2023:SBW}.

\begin{theorem} 
\label{horn}
The following holds for $\lambda\in [0,1/2]$, $\nu\ge \frac12 -\lambda$ and $x>0$:
\begin{equation}
\begin{array}{r}
\label{tang}
\Frac{I_{\nu-1}(x)}{I_{\nu}(x)}> L_\nu^{(I)}(\lambda,x)=B(\alpha^{(I)}_{\nu}(\lambda),\beta^{(I)}_{\nu}(\lambda),1,x),
\end{array}
\end{equation}
where $\alpha_{\nu}^{(I)}(\lambda)=\nu-1/2-\lambda$, $\beta_{\nu}^{(I)} (\lambda)=\sqrt{2\lambda}+\sqrt{\nu^2-(\lambda-\frac12)^2}$.
\end{theorem}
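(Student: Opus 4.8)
The plan is to treat $\Phi_\nu(x)=I_{\nu-1}(x)/I_\nu(x)$ as a solution of a first order ODE and apply Theorem \ref{prime}. First I would turn the system (\ref{DDE}) into a Riccati equation for the ratio: writing $h=\cali_{\nu-1}/\cali_\nu$ and differentiating, the two relations in (\ref{DDE}) combine into
\[
h'(x)=1+\frac{2\nu-1}{x}\,h(x)-h(x)^2,
\]
so that $\Phi_\nu$ solves $\phi'=P(x,\phi)$ with $P(x,y)=1+\frac{2\nu-1}{x}y-y^2$ on $(a,b)=(0,\infty)$. Since $\Phi_\nu$ is continuous and positive there it is bounded on compact subsets, so the hypotheses of Theorem \ref{prime} on $\phi$ hold. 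I would then set $\lambda(x)=L_\nu^{(I)}(\lambda,x)=B(\alpha,\beta,1,x)=(\alpha+R)/x$ with $R=\sqrt{\beta^2+x^2}$ and $\alpha,\beta$ as in the statement, and aim to prove $\delta(x)=\lambda(x)-\Phi_\nu(x)<0$.

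Next I would record the boundary data Theorem \ref{prime} needs at $x=0^+$. From $I_\nu(x)\sim (x/2)^\nu/\Gamma(\nu+1)$ one gets $\Phi_\nu(x)\sim 2\nu/x$, while $B(\alpha,\beta,1,x)\sim(\alpha+\beta)/x$, so $\delta(0^+)$ has the sign of $\alpha+\beta-2\nu$. A short monotonicity argument in $\lambda$ (the $\lambda$-derivative of $\alpha+\beta=\nu-\tfrac12-\lambda+\sqrt{2\lambda}+\sqrt{\nu^2-(\lambda-\tfrac12)^2}$ is nonnegative on $(0,\tfrac12]$, since there $1/\sqrt{2\lambda}\ge1$) shows $\alpha+\beta\le 2\nu$ with equality only at $\lambda=\tfrac12$; hence $\delta(0^+)<0$ for every $\lambda\in[0,\tfrac12)$.

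The heart of the proof is the computation of $\Delta=\lambda'-P(x,\lambda)$. Substituting $\lambda=(\alpha+R)/x$ and using $R^2=\beta^2+x^2$, everything collapses to
\[
x^2R\,\Delta(x)=\bigl(\alpha^2-2\nu\alpha+\beta^2\bigr)R+2(\alpha-\nu)\beta^2+(2\alpha-2\nu+1)x^2 .
\]
The decisive point is that the prescribed $\alpha$ and $\beta$ are exactly those for which $\alpha^2-2\nu\alpha+\beta^2=2\sqrt{2\lambda}\,\beta$, together with $2\alpha-2\nu+1=-2\lambda$ and $\alpha-\nu=-\tfrac12-\lambda$; feeding these in and again replacing $x^2$ by $R^2-\beta^2$ turns the right-hand side into the perfect square $-(\sqrt{2\lambda}\,R-\beta)^2$, so that
\[
\Delta(x)=-\frac{\bigl(\sqrt{2\lambda}\,\sqrt{\beta^2+x^2}-\beta\bigr)^2}{x^2\sqrt{\beta^2+x^2}}\le 0,\qquad x>0 .
\]
With $\Delta\le0$ and $\delta(0^+)<0$ one has $\delta(0^+)\Delta>0$, so Theorem \ref{prime}(1) gives $\delta\Delta>0$, i.e. $\delta<0$, which is the claimed inequality $\Phi_\nu>L_\nu^{(I)}(\lambda,x)$.

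The step I expect to be delicate is that $\Delta$ is not strictly negative: for $\lambda\in(0,\tfrac12)$ it has a double zero at the tangency point $x_*=\beta\sqrt{(1-2\lambda)/(2\lambda)}$, so the literal hypothesis ``$\delta(a^+)\Delta(x)>0$ in $(a,b)$'' fails there. I would handle this by rerunning the contradiction argument of Theorem \ref{prime}: at a hypothetical first zero $x_0$ of $\delta$ one has $\delta'(x_0)=\Delta(x_0)$, which forces $x_0=x_*$ (the only zero of $\Delta$); linearising $\delta'=\Delta+P_y(x,\lambda)\delta+O(\delta^2)$ about $x_*$ and integrating then shows that $\delta(x_*)=0$ would make $\delta\ge0$ just to the left of $x_*$, contradicting $x_*$ being a first zero. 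Equivalently one may apply Theorem \ref{prime} separately on $(0,x_*)$ and $(x_*,\infty)$, where $\Delta<0$ strictly, after checking $\delta(x_*)<0$. Finally the endpoint $\lambda=\tfrac12$, where $\delta(0^+)=0$ because $B$ then matches $\Phi_\nu$ to two orders at the origin, would be obtained either by continuity in $\lambda$ or by verifying that the first non-matching term of the expansion at $x=0$ has the correct sign and then invoking Theorem \ref{prime} from an interior point.
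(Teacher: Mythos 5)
Your proposal is correct, and it follows exactly the methodology this paper advocates (the paper itself does not prove Theorem \ref{horn} but only cites \cite{Hornik:2013:ABF} and \cite{Segura:2023:SBW}; the technique there is the same Riccati/qualitative analysis you use). I verified the two computations on which everything hinges: the Riccati equation $h'=1+\frac{2\nu-1}{x}h-h^2$ does follow from (\ref{DDE}), and with $\alpha=\nu-\tfrac12-\lambda$, $\beta=\sqrt{2\lambda}+\sqrt{\nu^2-(\lambda-\tfrac12)^2}$ one indeed gets $\alpha^2-2\nu\alpha+\beta^2=2\sqrt{2\lambda}\,\beta$, $2\alpha-2\nu+1=-2\lambda$, and hence $x^2R\,\Delta=-(\sqrt{2\lambda}\,R-\beta)^2\le 0$. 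Your flagging of the double zero of $\Delta$ at $x_*=\beta\sqrt{(1-2\lambda)/(2\lambda)}$ is the genuinely delicate point — the literal hypothesis of Theorem \ref{prime} fails there — and your repair is sound: at a hypothetical first zero $x_0$ of $\delta$ one is forced to $x_0=x_*$ and $\delta'(x_0)=0$, and the integrating-factor form $(\delta e^{-G})'=\Delta e^{-G}\le 0$ of the linearised equation then contradicts $\delta<0$ on $(0,x_*)$. The endpoint $\lambda=\tfrac12$ is also fine: there $\Delta<0$ strictly for all $x>0$ and the first non-matching coefficient of $\delta$ at the origin is $\frac{1}{8(\nu+1)^2}\bigl(\frac{1}{\nu+2}-\frac{1}{\nu+1}\bigr)<0$, so the argument restarts from any small interior point.
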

\begin{theorem} 
\label{bnk}
The following holds for $\lambda\in[0,1/2]$, $\nu\ge \frac12-\lambda$ and $x>0$:
$$
\Frac{K_{\nu+1}(x)}{K_{\nu}(x)}<U_{\nu}^{(K)}(\lambda,x)=B(\alpha_\nu^{(K)}(\lambda),\beta_\nu^{(K)}(\lambda),1,x),
$$
where
$$
\alpha_\nu^{(K)}(\lambda)=\nu+1/2+\lambda,\,\beta_\nu^{(K)}(\lambda)=-\sqrt{2\lambda}+\sqrt{\nu^2-(\lambda -\frac12)^2}.
$$

\end{theorem}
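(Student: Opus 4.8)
The plan is to apply the comparison result Theorem~\ref{prime} to the Riccati equation satisfied by $h(x)=K_{\nu+1}(x)/K_{\nu}(x)$, exactly in the spirit of the proof of Theorem~\ref{newbp}. First I would translate the system (\ref{DDE}): taking $\cali_\nu=e^{i\pi\nu}K_\nu$ produces the recurrences $K_\nu'=-K_{\nu+1}+\frac{\nu}{x}K_\nu$ and $K_{\nu+1}'=-K_\nu-\frac{\nu+1}{x}K_{\nu+1}$, and differentiating $h=K_{\nu+1}/K_\nu$ gives
\[
h'(x)=h(x)^2-\frac{2\nu+1}{x}\,h(x)-1 .
\]
Thus $h$ solves $\phi'=P(x,\phi)$ with $P(x,y)=y^2-\frac{2\nu+1}{x}y-1$, and $h$ is smooth, positive and bounded on every compact subset of $(0,\infty)$. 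Writing $u(x)=U_\nu^{(K)}(\lambda,x)$ for the candidate bound and, in the notation of Theorem~\ref{prime}, $\delta(x)=u(x)-h(x)$ and $\Delta(x)=u'(x)-P(x,u(x))$, it suffices to establish that $\Delta(x)\le 0$ on $(0,\infty)$ together with $\delta(+\infty)>0$; part~(2) of Theorem~\ref{prime} then yields $\delta(x)>0$, which is the asserted upper bound.

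The core of the argument is the evaluation of $\Delta$. With $R=\sqrt{\beta^2+x^2}$ (so $R'=x/R$) and $\gamma=1$, computing $u'$ and $P(x,u)$ and collecting over the common denominator $x^2R$ gives
\[
x^2R\,\Delta(x)=-\bigl(C R^2+D R+\beta^2\bigr),\qquad C=2\alpha-2\nu-1,\ \ D=\alpha^2+\beta^2-2\nu\alpha ,
\]
where $\alpha=\alpha_\nu^{(K)}(\lambda)$ and $\beta=\beta_\nu^{(K)}(\lambda)$. The decisive algebraic fact, which I expect to be the main obstacle, is that for precisely the stated coefficients these constants collapse: one finds $C=2\lambda$ and, after simplifying $\beta^2$, $D=-2\sqrt{2\lambda}\,\beta$, so that
\[
C R^2+D R+\beta^2=\bigl(\sqrt{2\lambda}\,R-\beta\bigr)^2\ge 0 ,
\]
and hence $\Delta(x)\le 0$ for all $x>0$. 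This is exactly where the hypotheses enter: $\nu\ge\frac12-\lambda$ ensures $\nu^2-(\lambda-\tfrac12)^2\ge 0$, so $\beta$ is real and $R$ is well defined, while $\lambda\le\frac12$ gives $\sqrt{2\lambda}\le 1$; the identity $(\lambda+\tfrac12)^2-(\lambda-\tfrac12)^2=2\lambda$ is the engine behind the factorization. Verifying this perfect-square collapse is the analytic heart of Theorem~\ref{bnk}, paralleling the perfect-square simplification of $\Delta$ in the PCF case.

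It then remains to fix the boundary value $\delta(+\infty)$. Using $K_{\nu+1}(x)/K_\nu(x)\sim 1+\frac{2\nu+1}{2x}+\mathcal{O}(x^{-2})$ and $u(x)\sim 1+\frac{\alpha}{x}+\mathcal{O}(x^{-2})$ with $\alpha=\nu+\tfrac12+\lambda$, I obtain $\delta(x)\sim \lambda/x>0$ as $x\to+\infty$ for $\lambda\in(0,\tfrac12]$; the degenerate case $\lambda=0$, where the leading $1/x$ terms cancel and the bound becomes sharp at infinity, is settled by the next term of the expansion or by continuity in $\lambda$. With $\Delta<0$ (strictly, away from the tangency point) and $\delta(b^-)>0$, so that $\delta(b^-)\Delta<0$, Theorem~\ref{prime}(2) delivers $\delta(x)>0$ on $(0,\infty)$.

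A secondary obstacle is that for $\lambda<\tfrac12$ the square $(\sqrt{2\lambda}\,R-\beta)^2$ vanishes at the isolated point $x_*$ where $\sqrt{2\lambda}\,R=\beta$, so $\Delta$ is only non-positive rather than strictly negative. I would dispose of this by applying Theorem~\ref{prime} separately on $(0,x_*)$ and $(x_*,+\infty)$, or by observing that at $x_*$ the curve $u$ is merely an instantaneous solution of the Riccati equation and is \emph{not} tangent to $h$, so the single interior zero of $\Delta$ cannot generate a crossing of $\delta$. Finally, I would record, as for the parabolic cylinder bounds in the remark following Theorem~\ref{newbp}, that an alternative derivation using the recurrence $K_{\nu+1}/K_\nu=K_{\nu-1}/K_\nu+2\nu/x$ together with the backward and forward recursions of Section~2 should also be available and would serve as an independent check, mirroring the duality between Theorem~\ref{horn} and Theorem~\ref{bnk}.
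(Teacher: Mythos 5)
The paper itself contains no proof of Theorem~\ref{bnk}; it is quoted from \cite{Segura:2023:SBW}. Your argument --- applying Theorem~\ref{prime} to the Riccati equation $h'=h^2-\frac{2\nu+1}{x}h-1$ satisfied by $h=K_{\nu+1}/K_\nu$ and showing $\Delta\le 0$ --- is exactly the methodology the paper advertises for these bounds, and your central computation is correct: with $u=(\alpha+R)/x$, $R=\sqrt{\beta^2+x^2}$, one indeed gets $x^2R\,\Delta=-(CR^2+DR+\beta^2)$ with $C=2\alpha-2\nu-1$ and $D=\alpha^2+\beta^2-2\nu\alpha$, and for $\alpha=\nu+\frac12+\lambda$, $\beta=-\sqrt{2\lambda}+\sqrt{\nu^2-(\lambda-\frac12)^2}$ these collapse to $C=2\lambda$, $D=-2\sqrt{2\lambda}\,\beta$, so that $x^2R\,\Delta=-(\sqrt{2\lambda}\,R-\beta)^2\le 0$. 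Two loose ends remain, both of which you flag but neither of which is fully closed. First, Theorem~\ref{prime}(2) as stated needs a strict sign of $\Delta$ on the whole interval, and for $\lambda\in(0,\frac12)$ with $\beta>0$ the square vanishes at the single point $x_*$ where $\sqrt{2\lambda}\,R=\beta$; splitting the interval at $x_*$ does not by itself finish the job, since on $(0,x_*)$ you would need $\delta(x_*)>0$ and the first-order information at a putative tangency ($\delta(x_*)=0$, $\delta'(x_*)=\Delta(x_*)=0$) is inconclusive. A clean repair is to integrate the linear equation $\delta'=\Delta+g\delta$, $g=u+h-\frac{2\nu+1}{x}$, to get $\delta(x)e^{-G(x)}=\int_x^\infty(-\Delta)e^{-G}\,dt>0$, using $\delta(X)e^{-G(X)}\to 0$ and the fact that $-\Delta\ge0$ vanishes at only one point. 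Second, for $\lambda=0$ the bound is the $(0,3)$ case, so $\delta$ cancels through order $x^{-2}$ at infinity and continuity in $\lambda$ yields only a non-strict inequality; you genuinely need the next coefficient of the asymptotic expansion there (and at $\nu=\frac12$, $\lambda=0$ the bound is in fact an identity, $K_{3/2}/K_{1/2}=1+1/x$). With these repairs the proof is sound and consistent with the paper's framework.
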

\begin{theorem} 
\label{IU}
The following holds for $\nu\ge 0$, $\lambda\in [1/2,2]$ and $x>0$:
\begin{equation}
\label{iuq}
\Frac{I_{\nu-1}(x)}{I_{\nu}(x)}<U_\nu^{(I)}(\lambda,x)=B(\nu-\lambda,\nu+\lambda,\sqrt{c_\nu^{(I)}(\lambda)},\,x),
\end{equation}
where
$$
c_\nu^{(I)}(\lambda)=\Frac{\nu+\lambda}{\nu-\lambda+2\sqrt{2\lambda}-1}.
$$
\end{theorem}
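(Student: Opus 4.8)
The plan is to apply Theorem \ref{prime} to the ratio $\Phi_\nu(x)=I_{\nu-1}(x)/I_\nu(x)$ on $(a,b)=(0,\infty)$. First I would convert the system \eqref{DDE} into the Riccati equation for $\Phi_\nu$: differentiating $\Phi_\nu=\cali_{\nu-1}/\cali_\nu$ and substituting \eqref{DDE} gives $\Phi_\nu'(x)=1+\frac{2\nu-1}{x}\Phi_\nu(x)-\Phi_\nu(x)^2=:P(x,\Phi_\nu(x))$. Since $I_\mu(x)>0$ for $x>0$, the ratio $\Phi_\nu$ is positive and bounded on every compact subset of $(0,\infty)$, so it qualifies as the solution in Theorem \ref{prime} for this $P$. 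Writing $u(x)=U_\nu^{(I)}(\lambda,x)=B(\nu-\lambda,\nu+\lambda,\sqrt{c_\nu^{(I)}(\lambda)},x)$, $\delta(x)=u(x)-\Phi_\nu(x)$ and $\Delta(x)=u'(x)-P(x,u(x))$, the goal is to verify the hypotheses of part (1) of the theorem at $a=0^+$, namely $\delta(0^+)>0$ and $\Delta(x)>0$ on $(0,\infty)$, which yields $\delta(x)>0$, i.e.\ the asserted upper bound.

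For the endpoint behavior I would use the small-$x$ expansion $\Phi_\nu(x)=\frac{2\nu}{x}+\frac{x}{2(\nu+1)}+\mathcal{O}(x^3)$. The choice $\alpha=\nu-\lambda$, $\beta=\nu+\lambda$ forces $\alpha+\beta=2\nu$, so $u$ reproduces the leading term, and expanding the radical gives $u(x)=\frac{2\nu}{x}+\frac{c_\nu^{(I)}(\lambda)}{2(\nu+\lambda)}x+\mathcal{O}(x^3)$. Hence $\delta(0^+)>0$ is equivalent to $c_\nu^{(I)}(\lambda)/(\nu+\lambda)>1/(\nu+1)$, which, after inserting $c_\nu^{(I)}(\lambda)$ and writing $D=\nu-\lambda+2\sqrt{2\lambda}-1$ (one checks $D\ge \nu+\tfrac12>0$ on the range), collapses to $(\lambda-2)^2>0$, valid for $\lambda\in[1/2,2)$; the endpoint $\lambda=2$ is the limiting best bound at $x=0$, where the next term must be examined separately.

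The heart of the argument is $\Delta(x)>0$. I would set $S=\sqrt{(\nu+\lambda)^2+\gamma^2 x^2}$ with $\gamma^2=c_\nu^{(I)}(\lambda)$, use $u'=-(\beta^2+\alpha S)/(x^2S)$ (from $S^2=\beta^2+\gamma^2x^2$), and clear denominators to reach $x^2 S\,\Delta(x)=T_0(x)+S\,T_1(x)$ with $T_1(x)=2\lambda(\nu+\lambda)-(1-\gamma^2)x^2$ and $T_0(x)=-2\lambda(\nu+\lambda)^2+(1-2\lambda)\gamma^2 x^2$. The first simplification is that $\gamma^2\ge 1$: indeed $\gamma^2-1=(\sqrt{2\lambda}-1)^2/D\ge 0$, so $T_1>0$ and $\Delta>0$ is equivalent to $S\,T_1>-T_0$. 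Squaring then reduces the whole statement to the polynomial inequality $(\beta^2+\gamma^2x^2)T_1^2-T_0^2\ge 0$, a cubic in $p=x^2$ that vanishes at $p=0$. After factoring out $p$, I expect the remaining quadratic $Q(p)=A_2p^2+A_1p+A_0$ to be a perfect square: with $t=\sqrt{2\lambda}$ one finds $A_0\propto(t-2)^2$, $A_2\propto(t-1)^4$, $A_1\propto t(t-1)^2(t-2)$, and the identity $A_1^2=4A_0A_2$ gives $Q(p)=(\sqrt{A_2}\,p-\sqrt{A_0})^2\ge 0$. Thus $\Delta(x)\ge 0$, vanishing only at $x=0$ and at a single interior point $x_*=(A_0/A_2)^{1/4}$ where the bound is tangent to $\Phi_\nu$.

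The main obstacle is exactly this perfect-square factorization: it is the algebraic identity encoding why $c_\nu^{(I)}(\lambda)$ is the correct constant, and verifying $A_1^2=4A_0A_2$ is the one computation I cannot avoid. A secondary technical point is that $\Delta$ is not strictly positive but vanishes at the tangency $x_*$ (for $\lambda\in(1/2,2)$), whereas Theorem \ref{prime} is stated with strict $\Delta>0$. I would handle this by applying the theorem separately on $(0,x_*)$ and on $(x_*,\infty)$, where $\Delta>0$, and patching across $x_*$ using continuity of $\delta$ together with $\delta'(x_*)=\Delta(x_*)=0$: the solution cannot cross the bound at any point where $\Delta>0$, so $\delta\ge 0$ throughout, strict off $x_*$. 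This gives $I_{\nu-1}/I_\nu\le U_\nu^{(I)}(\lambda,x)$. Finally, as an independent check, I would note that the same bound follows without the $\Delta$-computation by applying the backward recurrence $\Phi_\nu=\frac{2\nu}{x}+1/\Phi_{\nu+1}$ to the lower bound of Theorem \ref{horn} for $\Phi_{\nu+1}=I_\nu/I_{\nu+1}$, after the appropriate reparametrization of $\lambda$.
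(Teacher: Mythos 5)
The paper itself contains no proof of Theorem \ref{IU}; it is imported from \cite{Segura:2023:SBW}, and the mechanism described there --- Theorem \ref{prime} applied to the Riccati equation for $\Phi_\nu$, with the bound osculating the ratio at one interior point $x_*$ --- is exactly what you reconstruct, so your route matches the paper's framework. Every step I can check is right: the Riccati equation $\Phi_\nu'=1+\frac{2\nu-1}{x}\Phi_\nu-\Phi_\nu^2$, the small-$x$ comparison (though the positive quantity is $(\sqrt{2\lambda}-2)^2/2$, not $(\lambda-2)^2$; same vanishing locus), the identities $\gamma^2-1=(\sqrt{2\lambda}-1)^2/D$ and your $T_0,\,T_1$, and the perfect-square identity $A_1^2=4A_0A_2$ with $A_1\le 0$, which I confirmed symbolically at $\lambda=\tfrac12,2$ and numerically at $\nu=1,\lambda=1$. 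Two points need repair. First, the patching across $x_*$ is not airtight: Theorem \ref{prime} needs strict $\Delta>0$, and in the scenario $\delta(x_*)=0$ you get $\delta'(x_*)=\Delta(x_*)=0$, so the first-crossing contradiction evaporates and ``continuity'' alone does not rule out a subsequent sign change. The clean fix is to write $\delta'=q(x)\delta+\Delta$ with $q(x)=\frac{2\nu-1}{x}-\bigl(u(x)+\Phi_\nu(x)\bigr)$ and integrate: $\delta(x)=e^{\int_\epsilon^x q}\bigl(\delta(\epsilon)+\int_\epsilon^x e^{-\int_\epsilon^s q}\Delta(s)\,ds\bigr)>0$, which works with $\Delta\ge 0$ and also delivers the strict inequality claimed in the statement. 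Second, your closing ``independent check'' is wrong: for Bessel functions the backward recurrence $\Phi_\nu=\frac{2\nu}{x}+1/\Phi_{\nu+1}$ does \emph{not} preserve the class $(\alpha+\sqrt{\beta^2+\gamma^2x^2})/x$ (unlike the parabolic cylinder case, where $1/(x+\sqrt{x^2+c})$ stays in the class), so $U_\nu^{(I)}(\lambda,\cdot)$ cannot be recovered from $L_{\nu+1}^{(I)}$ by recursion --- the paper's own $(2,3)$ bound following Theorem \ref{gapk} shows that recursion leaves the $B$-form. Finally, as you note, $\lambda=2$ requires the separate next-order check of $\delta(0^+)$, since the first three terms at $x=0$ then coincide.
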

\begin{theorem} 
\label{blk}
The following holds for  $\lambda\in [1/2,2]$, $\nu\ge \lambda$ and $x>0$:
$$
\Frac{K_{\nu+1}(x)}{K_{\nu}(x)}>L_{\nu}^{(K)}(\lambda,x)=B(\nu+\lambda,\nu-\lambda, \sqrt{c_{\nu}^{(K)}(\lambda)}, x),
$$
where
$$
c_\nu^{(K)}(\lambda)=\Frac{\nu-\lambda}{\nu+\lambda-2\sqrt{2\lambda}+1}.
$$
\end{theorem}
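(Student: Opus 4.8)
The plan is to realise $L_\nu^{(K)}(\lambda,x)$ as a barrier for the Riccati equation governing $\Phi_\nu(x)=K_{\nu+1}(x)/K_\nu(x)$ and to conclude via Theorem~\ref{prime}. First I would record the first order ODE for this ratio. Taking $\cali_\nu=e^{i\pi\nu}K_\nu$ in the system (\ref{DDE}), with the indices shifted by one, and forming $\Phi_\nu=K_{\nu+1}/K_\nu$, the usual quotient computation gives
\[
\Phi_\nu'(x)=P(x,\Phi_\nu(x)),\qquad P(x,y)=y^2-\Frac{2\nu+1}{x}\,y-1.
\]
Since $K_\mu(x)>0$ for real order and $x>0$, the ratio $\Phi_\nu$ is positive, continuous and bounded on every compact subset of $(0,\infty)$, so the hypotheses of Theorem~\ref{prime} are available with $\phi=\Phi_\nu$ and $(a,b)=(0,\infty)$. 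Writing $L(x)=L_\nu^{(K)}(\lambda,x)=B(\nu+\lambda,\nu-\lambda,\sqrt{c},x)$ with $c=c_\nu^{(K)}(\lambda)$, I set $\delta(x)=L(x)-\Phi_\nu(x)$ and $\Delta(x)=L'(x)-P(x,L(x))$; the goal is $\delta<0$ on $(0,\infty)$, which is exactly the claimed inequality.

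I would use part (2) of Theorem~\ref{prime}, for which I need $\Delta\ge 0$ on $(0,\infty)$ together with $\delta(b^-)<0$ at $b=+\infty$. The boundary sign is easy: as $x\to+\infty$ one has $\Phi_\nu\to 1$ while $L(x)\to\sqrt{c}$. Writing $t=\sqrt{2\lambda}$, a one line computation gives $1-c=(t-1)^2/D$ with $D=\nu+\lambda-2\sqrt{2\lambda}+1=\nu+(t-1)^2>0$, so $c\le 1$ and hence $\delta(+\infty)=\sqrt{c}-1\le 0$, strictly negative for $\lambda\in(1/2,2]$. The marginal value $\lambda=1/2$ (where $c=1$ and $L,\Phi_\nu$ already agree to two orders at infinity) is a best-bound-at-infinity case and would be settled by comparing the next term of the two asymptotic expansions.

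The core of the proof is the sign of $\Delta$. Here I would substitute $L(x)=(A+S)/x$ with $A=\nu+\lambda$, $B=\nu-\lambda\ge 0$ and $S=\sqrt{B^2+cx^2}$, and after elementary simplification (using $S^2=B^2+cx^2$ to eliminate $x$) reduce $\Delta$ to a single rational expression
\[
\Delta(x)=\Frac{N}{S(B+S)},\qquad N=(1-c)S^2+MS+cB,\quad M=(1-c)B+c(1-2\lambda).
\]
The decisive point is that the prescribed value $c=(\nu-\lambda)/D$ is exactly the one making $N$ a perfect square: in the variable $t=\sqrt{2\lambda}$ one finds $M=2B(1-t)/D$, whence the discriminant $M^2-4(1-c)cB=4B^2(t-1)^2/D^2-4B^2(t-1)^2/D^2$ vanishes identically. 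Therefore $N=(1-c)(S-S_0)^2\ge 0$ with $S_0=B/(t-1)$, and since $1-c\ge 0$ we obtain $\Delta\ge 0$ on $(0,\infty)$. This is the step where the precise form of $c_\nu^{(K)}(\lambda)$ is essential, and it is the main computation of the proof.

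The one genuine obstacle is that $\Delta$ is not strictly positive: for $\lambda\in(1/2,2)$ we have $S_0>B$, so $\Delta$ has a double zero at the unique $x_0>0$ with $S(x_0)=S_0$, and the strict hypothesis of Theorem~\ref{prime} fails precisely there. I would therefore run the contradiction scheme underlying that theorem, adapted to this single zero. If $\delta$ vanished, let $x_1$ be its largest zero (finite since $\delta(+\infty)<0$); then $\delta<0$ on $(x_1,\infty)$ forces $\delta'(x_1)=\Delta(x_1)\le 0$, while $\Delta\ge 0$ gives $\Delta(x_1)=0$, so $x_1=x_0$. Subtracting the two Riccati-type equations yields the linear relation $\delta'=\Delta+\delta\,Q$ with $Q=L+\Phi_\nu-(2\nu+1)/x$; at a zero $x_0$ of $\delta$ that is also a double zero of $\Delta$ one gets $\delta(x_0)=\delta'(x_0)=\delta''(x_0)=0$ and $\delta'''(x_0)=\Delta''(x_0)>0$, hence $\delta\sim\tfrac16\Delta''(x_0)(x-x_0)^3>0$ just to the right of $x_0$, contradicting $\delta<0$ on $(x_0,\infty)$. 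Thus $\delta$ has no zero and, being negative near $+\infty$, stays negative on all of $(0,\infty)$, which is the assertion $K_{\nu+1}/K_\nu>L_\nu^{(K)}(\lambda,x)$. As a cross-check, the same bound should also be reachable from Theorem~\ref{IU} (or Theorem~\ref{horn}) through the backward recurrence $\Phi_\nu=2\nu/x+1/\Phi_{\nu-1}$, in the spirit of the recurrence arguments used earlier for the parabolic cylinder bounds.
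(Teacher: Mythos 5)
Your proposal is correct, and it is essentially the proof that the paper's source reference uses: the paper itself states Theorem~\ref{blk} without proof, deferring to \cite{Segura:2023:SBW}, and explicitly describes the technique there as ``similar to the ideas of Theorems~\ref{second} and \ref{prime}.'' Your computation is the heart of that argument. I checked the key steps: the Riccati equation $\Phi_\nu'=\Phi_\nu^2-\frac{2\nu+1}{x}\Phi_\nu-1$ is right; the reduction $\Delta=\bigl((1-c)S^2+MS+cB\bigr)/\bigl(S(B+S)\bigr)$ with $M=(1-c)B+c(1-2\lambda)$ is correct; and with $c=B/D$, $D=B+(t-1)^2$, $t=\sqrt{2\lambda}$, one indeed gets $1-c=(t-1)^2/D$, $M=2B(1-t)/D$, vanishing discriminant, and $N=(1-c)\bigl(S-B/(t-1)\bigr)^2\ge 0$. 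This perfect-square structure is exactly the ``osculatory''/tangency phenomenon the paper alludes to for these uniparametric families, and your cubic-contact contradiction argument at the double zero $x_0$ (using $\delta'=\Delta+\delta Q$, so $\delta=\delta'=\delta''=0$ and $\delta'''=\Delta''>0$ at $x_0$) correctly repairs the failure of strict positivity of $\Delta$ that prevents a literal application of Theorem~\ref{prime} when $\lambda\in(1/2,2)$. Three small points to tidy up: (i) your $D$ should read $D=(\nu-\lambda)+(t-1)^2$, not $\nu+(t-1)^2$ (this typo does not propagate, since $1-c=(D-B)/D=(t-1)^2/D$ still holds); (ii) the case $\nu=\lambda$ gives $c=0$, $S\equiv 0$, and your rational form for $\Delta$ degenerates to $0/0$ --- but then $L=2\nu/x$ and a direct computation gives $\Delta\equiv 1>0$, so Theorem~\ref{prime} applies immediately; (iii) for $\lambda=1/2$ the deferred asymptotic comparison works out, $\delta(x)\sim-(\nu-1/2)/(2x^2)$, though note that at $\nu=\lambda=1/2$ one has identically $K_{3/2}/K_{1/2}=1+1/x=L$, so the inequality is an equality there rather than strict.
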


As we did earlier in this paper, it is possible to find the bounds of the form $B(\alpha,\beta,\gamma,x)$ which are most accurate
at $x=0$ or $x=+\infty$ by comparing the expansions at these points. We say that a bound has accuracy $(n,m)$ if the first $n$
 terms of its expansion around $x=0$ are exact and the same happens with the first $m$ terms at $x=+\infty$. In the next table
 we summarize such bounds both for $I_{\nu-1}/I_{\nu}(x)$ and $K_{\nu+1}/K_{\nu}(x)$; all those bounds are particular cases of the 
 previous four theorems, and this relation is also given in the table.
  
  For the ratio $I_{\nu-1}(x)/I_{\nu}(x)$, all the bounds appearing in the table had been already described before, but 
  they were first classified and ordered in \cite{Segura:2023:SBW}. 
  In contrast, the set of best bounds at $x=0$ and/or $x=+\infty$ for 
  $K_{\nu+1}(x)/K_{\nu}(x)$ was not complete, as the cases $(0,3)$ and $(3,0)$ had not been considered earlier and were first described in 
  \cite{Segura:2023:SBW}.
  
 \begin{table}[h]
\label{table1}
\begin{tabular}{ccccll}
$(n,m)$ & $\alpha$ & $\beta$ & $\gamma$ & Range & Bound\\
$(2,1)$ & $\nu -1$ & $\nu+1$ & $1$ & $\nu\ge 0$ &  $L_{\nu}^{(I)}(\frac12,x)$\\
$(0,3)$ & $\nu -\frac12$ & $\sqrt{\nu^2-\frac14}$ & $1$ & $\nu\ge \frac12$ & $L_{\nu}^{(I)}(0,x)$ \\
$(1,2)$ & $\nu -\frac12$ & $\nu+\frac12$ & $1$ & $\nu\ge 0$ &  $U_{\nu}^{(I)}(\frac12,x)$ \\
$(3,0)$ & $\nu -2$ & $\nu+2$ & $\sqrt{(\nu+2)/(\nu+1)}$ & $\nu \ge 0$ & $U_{\nu}^{(I)}(2,x)$ \\
$(2,1)$ & $\nu+1$ & $\nu-1$ & $1$ & $\nu\in {\mathbb R}$ & $U_{\nu}^{(K)}(\frac12,x)$\\
$(0,3)$ & $\nu+\frac12$ & $\sqrt{\nu^2-\frac14}$ & $1$ & $\nu >1/2$ & $U_{\nu}^{(K)}(0,x)$\\
$(1,2)$ & $\nu+\frac12$ & $\nu-\frac12$ & $1$ & $\nu>1/2$ & $L_{\nu}^{(K)}(\frac12,x)$\\
$(3,0)$ & $\nu +2$ & $\nu- 2$ & $\sqrt{(\nu-2)/(\nu-1)}$ & $\nu \ge 2$ & $L_{\nu}^{(K)}(2,x)$\\
& & & & &\\
\end{tabular}

\caption{Bounds for the ratios $I_{\nu-1}(x)/I_{\nu}(x)$ 
and $K_{\nu+1}(x)/K_{\nu}(x)$ 
of the type $B(\alpha,\beta,\gamma,x)=
(\alpha+\sqrt{\beta^2+\gamma^2 x^2})/x$ classified
according to their accuracies at $x=0$ ($n$) and $x=+\infty$ ($m$). The range of validity of the bounds is shown, and the relation with 
the parametric bounds of Theorems \ref{horn}--\ref{blk} is given in the last column (the lower
bounds are denoted with an $L$ and the upper bounds with a $U$).}
\end{table}

The bounds in Table 1 exhaust the best bounds at $0$ and/or $+\infty$ of the form $B(\alpha,\beta,\gamma,x)$. 
However, of course, other forms may be available with higher accuracy. For instance, in \cite{Segura:2023:SBW} the following
bounds are proved using arguments similar to those of Theorem \ref{prime}.
\begin{theorem} 
\label{gapk}
Let $\phi_{-,\nu}(x)=xI_{\nu-1}(x)/I_{\nu}(x)$ and $\phi_{+,\nu}(x)=xK_{\nu+1}(x)/K_{\nu}(x)$,
then both functions satisfy the following properties for $\nu\ge 1/2$ and $x>0$
$$
0<\phi'_{\pm,\nu}(x)\le 1,
$$
$$
B_{\nu}^{(1,3)}(x)\equiv \nu+\sqrt{\nu^2+x(x-1)}<\phi_{\pm,\nu}(x)\le\nu+\sqrt{\nu^2 + x(x+1)}
\equiv \hat{B}_{\nu}^{(1,3)}(x),
$$
where the equality only takes place for 
$\phi_{+,\nu}(x)$ when $\nu=1/2$.
The upper bound for $\phi_{+,\nu}(x)$ and
the lower bound for $\phi_{-,\nu}(x)$ are of accuracy $(1,3)$.
\end{theorem}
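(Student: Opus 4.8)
The plan is to reduce both functions to first--order ODEs through the system \eqref{DDE} and then read off the three relevant curves as level sets of the right--hand side. Differentiating $h=I_{\nu-1}(x)/I_{\nu}(x)$ with \eqref{DDE} gives the Riccati equation $h'=1+\frac{2\nu-1}{x}h-h^2$, while $g=K_{\nu+1}(x)/K_{\nu}(x)$ satisfies $g'=g^2-\frac{2\nu+1}{x}g-1$. Multiplying by $x$ and setting $\phi_{-,\nu}=xh$, $\phi_{+,\nu}=xg$ one finds, with $P(x,y)=(y^2-2\nu y-x^2)/x$, the equations $\phi_{-,\nu}'(x)=-P(x,\phi_{-,\nu})$ and $\phi_{+,\nu}'(x)=P(x,\phi_{+,\nu})$. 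Both are Riccati equations $\phi'=a(x)+b(x)\phi+c(x)\phi^2$ with $a(x)c(x)=-1<0$, so Theorem \ref{second} applies directly. The key observation is that for $y>\nu$ the level set $P(x,y)=c$ is exactly $y=\nu+\sqrt{\nu^2+x^2+cx}$: thus $c=1$ gives $\hat B_\nu^{(1,3)}$, $c=-1$ gives $B_\nu^{(1,3)}$, and $c=0$ gives the common nullcline $\lambda(x)=\nu+\sqrt{\nu^2+x^2}$, which lies strictly between them. Since $P$ is increasing in $y$ on $y>\nu$, the statement $0<\phi_{+,\nu}'\le1$ is \emph{equivalent} to $\lambda<\phi_{+,\nu}\le\hat B_\nu^{(1,3)}$, and $0<\phi_{-,\nu}'\le1$ to $B_\nu^{(1,3)}\le\phi_{-,\nu}<\lambda$; this is the dictionary that ties parts (1) and (2) of the theorem together.

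First I would use Theorem \ref{second} to obtain the nullcline bounds and, with them, monotonicity. The positive characteristic root is $\lambda(x)$, which is increasing. For $\phi_{+,\nu}$ one has $c(x)=1/x>0$, and from the behaviour $\phi_{+,\nu}\sim x+\nu+\tfrac12$ (increasing) as $x\to+\infty$ the hypotheses of case (2) hold, giving $\phi_{+,\nu}>\lambda$. For $\phi_{-,\nu}$ one has $c(x)=-1/x<0$, and from $\phi_{-,\nu}(0^+)=2\nu$ with $\phi_{-,\nu}=2\nu+\frac{x^2}{2(\nu+1)}+\cdots$ case (1) applies, giving $\phi_{-,\nu}<\lambda$. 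These two facts already establish $\phi_{\pm,\nu}'>0$ together with the two \emph{non}-accurate inequalities $\phi_{+,\nu}>B_\nu^{(1,3)}$ and $\phi_{-,\nu}<\hat B_\nu^{(1,3)}$, because $B_\nu^{(1,3)}<\lambda<\hat B_\nu^{(1,3)}$.

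It remains to prove the two accurate $(1,3)$ bounds, $\phi_{+,\nu}\le\hat B_\nu^{(1,3)}$ and $\phi_{-,\nu}\ge B_\nu^{(1,3)}$, for which I would invoke Theorem \ref{prime} with $\lambda$ replaced by the candidate bound. A short computation gives $\Delta(x)=\hat B{}'-1$ in the first case and $B'-1$ in the second; both reduce to $(2x\pm1)^2\le4(\nu^2+x^2\pm x)$, i.e. to $4\nu^2\ge1$, so $\Delta\le0$ exactly when $\nu\ge\tfrac12$, with $\Delta\equiv0$ only at $\nu=\tfrac12$ for $\hat B$ (this is the stated equality case, where $\phi_{+,1/2}(x)=x+1=\hat B_{1/2}^{(1,3)}(x)$). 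With the sign of $\Delta$ fixed, Theorem \ref{prime} closes each inequality once the sign of $\delta=\lambda-\phi$ is known at one endpoint. For the lower bound on $\phi_{-,\nu}$ this is immediate at $x=0^+$: from the expansions above, $B_\nu^{(1,3)}-\phi_{-,\nu}=-\frac{x}{2\nu}+\cdots<0$, and since $\Delta<0$ this is precisely the hypothesis of part (1).

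The main obstacle is the upper bound on $\phi_{+,\nu}$. The natural anchor for a $K$--ratio is $x\to+\infty$, and part (2) of Theorem \ref{prime} requires $\hat B_\nu^{(1,3)}-\phi_{+,\nu}>0$ there; the subleading data at $x=0^+$ turn out to be degenerate for this case (the sign condition falls on the wrong side of part (1)), so the endpoint at infinity is forced. But $\hat B_\nu^{(1,3)}$ was built to match three terms of the large--$x$ expansion of $\phi_{+,\nu}$, so the leading contributions cancel and the sign is decided only by the first \emph{non}-matching coefficient: one must expand $K_{\nu+1}/K_{\nu}$ one order further (through $x^{-3}$) and verify that the resulting $x^{-2}$ coefficient of $\phi_{+,\nu}$ is smaller than $-\tfrac{4\nu^2-1}{16}$, the corresponding coefficient of $\hat B_\nu^{(1,3)}$. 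This single asymptotic comparison is the real work; everything else is the bookkeeping above. Collecting the four inequalities gives the displayed two-sided bound, and reading them back through the equivalences of the first paragraph yields $0<\phi_{\pm,\nu}'\le1$ with equality only for $\phi_{+,\nu}$ at $\nu=\tfrac12$, as claimed.
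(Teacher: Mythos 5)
Your proposal is correct and follows exactly the route the paper attributes to \cite{Segura:2023:SBW} for this theorem: reduce to the Riccati equations for $\phi_{\pm,\nu}$, read $B_{\nu}^{(1,3)}$, $\lambda$ and $\hat{B}_{\nu}^{(1,3)}$ as the level sets $P=-1,0,1$ of the common right-hand side, get monotonicity and the non-sharp halves of the two-sided bound from Theorem \ref{second}, and close the two sharp halves with Theorem \ref{prime}. The one computation you defer does come out as you predict: writing $\phi_{+,\nu}=x+\alpha_0+\alpha_1x^{-1}+\alpha_2x^{-2}+\cdots$ and matching coefficients in $x\phi'=\phi^2-2\nu\phi-x^2$ gives $\alpha_0=\nu+\tfrac12$, $\alpha_1=\tfrac{4\nu^2-1}{8}$, $\alpha_2=-\tfrac{4\nu^2-1}{8}$, so that
$$
\hat{B}_{\nu}^{(1,3)}(x)-\phi_{+,\nu}(x)=\Frac{4\nu^2-1}{16}\,x^{-2}+{\mathcal O}(x^{-3})>0 \mbox{ for }\nu>\tfrac12,
$$
which is precisely the anchor at $+\infty$ that part (2) of Theorem \ref{prime} requires. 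Two small points to tidy: (i) your remark that $\Delta\equiv0$ occurs only for $\hat{B}$ at $\nu=\tfrac12$ is not quite accurate, since for $B_{1/2}^{(1,3)}(x)=\tfrac12+\lvert x-\tfrac12\rvert$ one also has $\Delta=B'-1=0$ on $(1/2,\infty)$; Theorem \ref{prime} therefore does not apply there and the strict inequality $B_{1/2}^{(1,3)}<\phi_{-,1/2}$ needs a separate (trivial) verification, e.g.\ from $\phi_{-,1/2}(x)=x\coth x$; (ii) at the left endpoint the hypotheses are formally degenerate ($\delta(0^+)=0$ and $\lambda'(0^+)=0$), so, as you implicitly do, the required signs must be read off from the first nonvanishing terms of the small-$x$ expansions rather than from the limiting values themselves.
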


Of course, further bounds are possible by application of the recurrence relation. For instance, using the backward recurrence in
the case of first kind Bessel functions we get, starting from the lower bound of the previous theorem that, for all $x>0$ and $\nu> 0$
$$
\Frac{I_{\nu-1}(x)}{I_{\nu}(x)}<\Frac{2\nu}{x}+\Frac{x}{\nu+1+\sqrt{(\nu+1)^2+x(x-1)}},
$$
and this bound has accuracy $(2,3)$. 

\subsection{Other bounds}

It is possible to obtain bounds with higher accuracy using modified methods, as we will next describe. It is also possible,
 as described before, to improve the accuracy of the bounds at $x=0$ by using the recurrence relation. In all these cases, 
 the improvement in the accuracy of the bounds is accompanied by more involved expressions for them. We briefly describe
 the bounds obtained from the iteration of the Riccati equation in 
 \cite{Ruiz:2016:ANT} and from the use of the ODE satisfied by double ratios in \cite{Segura:2021:MPF}.

\subsubsection{Bounds from the iteration of the Riccati equation}

The idea is to start from a Riccati equation
$$
h_0 '(x)=A_0 (x)+B_0(x)h_0 (x)+C_0(x)h_0 (x)^2,
$$
and to consider the function $h_1 (x)=h_0 (x)/\phi_0 (x)$, where $\phi_0 (x)\equiv \beta_0 (x)$ 
is a function of convenience. We choose $\phi_0 (x)$ as one of the roots of $A_0 (x)+B_0(x)\phi_0 (x)+C_0(x)\phi_0 (x)^2=0$, 
which is a bound for $h_0(x)$ if the conditions of Theorem \ref{second} are met. 
The next step
is to consider the Riccati equation for $h_1(x)$
\begin{equation}
\begin{array}{l}
h_1 '(x)=A_1(x)+B_1(x)h_1 (x)+C_1(x) h_1(x)^2\\
A_1 (x)=\Frac{A_0(x)}{\phi_0 (x)},\,B_1 (x) =B_0(x)-\Frac{\phi_0^{\prime}(x)}{\phi_0(x)},\,C_1 (x)=\phi_0(x) C_0(x).
\end{array}
\end{equation}
If $\phi_1 (x)$, one of the solutions of the characteristic equation $A_1 (x)+B_1 (x)\phi_1 (x)+C_1 (x)\phi_1 (x)^2=0$,
turns out to be a bound for $h_1(x)$, then $\beta_1 (x)=\phi_1 (x)\beta_0 (x)=\phi_1(x)\phi_0(x)$ will be a bound for
$h_1 (x)$. For studying whether $h_1(x)$ is a bound, the same type of analysis
as for the original Riccati equation  
is considered, based on Theorem \ref{second}.

This iteration of Riccati equations was introduced in \cite{Ruiz:2016:ANT}, 
starting from the Riccati equations for 
$h(x)=x^{-\alpha}I_{\nu}(x)/I_{\nu-1}(x)$ and $h(x)=x^{-\alpha}K_{\nu -1}(x)/K_{\nu}(x)$ (it would be equivalent to consider the Riccati
equations for $\Phi(x)=1/h(x)$). We summarize these results and compare them with those more elementary (but also accurate) bounds in
\cite{Segura:2023:SBW}.

For first kind Bessel functions, and after one iteration, the best bounds that are obtained are for $\alpha=0$ and $\alpha=2$, and we have:
\begin{theorem}
\label{newbi}
Let
$$
B_{\alpha} (\nu,x)=\Frac{\delta_{\alpha} (\nu,x)+
\sqrt{\delta_{\alpha} (\nu ,x)^2+x^2}}{x},$$
where
$$
\delta_{\alpha} (\nu,x)=(\nu-1/2)+\Frac{\lambda}{2\sqrt{\lambda^2+x^2}},\,\lambda=\nu+(\alpha-1)/2,
$$
then
$$
\Frac{I_{\nu-1}(x)}{I_{\nu}(x)}>B_{0}(\nu,x),\,\nu\ge 1/2
$$
and
$$
\Frac{I_{\nu-1}(x)}{I_{\nu}(x)}<B_{2}(\nu,x),\,\nu\ge 0.
$$
\end{theorem}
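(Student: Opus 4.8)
The plan is to follow the iteration-of-Riccati scheme described above, applied to $h_0(x)=x^{-\alpha}I_\nu(x)/I_{\nu-1}(x)$, and then to verify the resulting candidate with the qualitative tools already established. First I would use the system (\ref{DDE}) to obtain the Riccati equation for $R=I_\nu/I_{\nu-1}$, namely $R'=1-\frac{2\nu-1}{x}R-R^2$, and hence for $h_0=x^{-\alpha}R$,
\[
h_0'=x^{-\alpha}-\frac{2\nu-1+\alpha}{x}h_0-x^\alpha h_0^2,
\]
so that $A_0=x^{-\alpha}$, $B_0=-(2\nu-1+\alpha)/x$, $C_0=-x^\alpha$, with $A_0C_0=-1<0$. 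The positive nullcline is $\phi_0=(\sqrt{\lambda^2+x^2}-\lambda)/x^{\alpha+1}$ with $\lambda=\nu+(\alpha-1)/2$, exactly the $\lambda$ of the theorem, and by Theorem \ref{second} it bounds $h_0$.

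The second step is to form $h_1=h_0/\phi_0$ and compute its coefficients $A_1=A_0/\phi_0$, $B_1=B_0-\phi_0'/\phi_0$, $C_1=\phi_0C_0$. A short computation of the logarithmic derivative gives $\phi_0'/\phi_0=\frac{\lambda}{x\sqrt{\lambda^2+x^2}}-\frac{\alpha}{x}$, whence
\[
B_1=-\frac{2\nu-1}{x}-\frac{\lambda}{x\sqrt{\lambda^2+x^2}}=-\frac{2\delta_\alpha(\nu,x)}{x},
\]
which is precisely where $\delta_\alpha$ emerges. Since $A_1C_1=-1$ is preserved, the positive nullcline $\phi_1$ of the $h_1$-equation, once shown to be a bound by a second application of Theorem \ref{second}, yields the bound $\phi_1\phi_0$ for $h_0$ and hence $B_\alpha(\nu,x)=1/(x^\alpha\phi_1\phi_0)$ for $\Phi_\nu=I_{\nu-1}/I_\nu=1/R$; one checks that $B_\alpha$ is the positive root of $y^2-\frac{2\delta_\alpha}{x}y-1=0$, i.e.\ exactly the stated $(\delta_\alpha+\sqrt{\delta_\alpha^2+x^2})/x$.

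For the verification itself I would sidestep the two-stage bookkeeping and apply Theorem \ref{prime} directly to the genuine equation $\Phi_\nu'=P(x,\Phi_\nu)$, $P(x,y)=1+\frac{2\nu-1}{x}y-y^2$, with candidate $\lambda(x)=B_\alpha(\nu,x)$. Using the defining relation $B_\alpha^2=\frac{2\delta_\alpha}{x}B_\alpha+1$ to eliminate the quadratic term, the quantity $\Delta(x)=B_\alpha'-P(x,B_\alpha)$ collapses to
\[
\Delta(x)=B_\alpha'(x)+\frac{\lambda\,B_\alpha(x)}{x\sqrt{\lambda^2+x^2}},
\]
where $B_\alpha'$ is obtained by implicit differentiation of the defining quadratic. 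The endpoint sign of $\delta(x)=B_\alpha-\Phi_\nu$ is read off from $\Phi_\nu\sim 2\nu/x$ as $x\to0^+$ together with $\delta_\alpha(0^+)=\nu$ (valid for $\alpha=0,2$ on the stated ranges of $\nu$) and from $\Phi_\nu,B_\alpha\to1$ as $x\to+\infty$; the reciprocal relation $\Phi_\nu=1/R$ is what converts a lower/upper bound on $h_0$ into the opposite one on $\Phi_\nu$, so that $\alpha=0$ produces a lower bound and $\alpha=2$ an upper bound.

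The hard part will be establishing that $\Delta(x)$ keeps a single definite sign on all of $(0,\infty)$ for each of $\alpha=0,2$; equivalently, in the iteration picture, that the new nullcline $\phi_1$ is monotone throughout (increasing for $\alpha=0$, decreasing for $\alpha=2$), which is the delicate monotonicity hypothesis of Theorem \ref{second}. Since both $\Phi_\nu$ and $B_\alpha$ are decreasing, the two contributions to $\Delta$ compete and no sign is visible term by term; the route is to rationalize the nested radicals $\sqrt{\lambda^2+x^2}$ and $\sqrt{\delta_\alpha^2+x^2}$, reducing the claim to a polynomial inequality in $x$ with $\nu$-dependent coefficients that must be verified for $\alpha=0$ and $\alpha=2$ separately. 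A secondary subtlety is that the leading terms of $\delta(x)$ cancel at both endpoints, so the endpoint sign required by Theorem \ref{prime} must be extracted from the first non-matching coefficient of the asymptotic expansions at $0$ and $+\infty$, not from the leading term.
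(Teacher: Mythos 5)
Your plan is exactly the route the paper intends: Theorem \ref{newbi} is quoted from \cite{Ruiz:2016:ANT} and the text only describes the iteration-of-Riccati scheme, which you have reconstructed correctly. I checked your algebra: the Riccati equation for $h_0=x^{-\alpha}I_\nu/I_{\nu-1}$, the nullcline $\phi_0=(\sqrt{\lambda^2+x^2}-\lambda)/x^{\alpha+1}$, the logarithmic derivative $\phi_0'/\phi_0=\lambda/(x\sqrt{\lambda^2+x^2})-\alpha/x$, the identity $B_1=-2\delta_\alpha/x$, the invariance $A_1C_1=A_0C_0=-1$, and the fact that $1/(x^\alpha\phi_0\phi_1)$ solves $y^2-(2\delta_\alpha/x)y-1=0$ and hence equals $B_\alpha$ are all right. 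This is a faithful reconstruction of where $\delta_\alpha$ and $B_\alpha$ come from.

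The gap is that the decisive step is only announced, not performed: you must actually establish that $\Delta(x)$ keeps one sign on all of $(0,\infty)$ (equivalently, that the second nullcline $\phi_1$ is monotone in the direction required by Theorem \ref{second}), separately for $\alpha=0$ and $\alpha=2$, and you must extract the endpoint sign of $B_\alpha-\Phi_\nu$ from the first non-matching coefficient since the leading terms cancel at both $0$ and $+\infty$. This cannot be treated as routine bookkeeping: the sign condition is exactly what fails for other values of $\alpha$ (the paper notes that after one iteration only $\alpha=0$ and $\alpha=2$ yield bounds), and it is also where the restrictions $\nu\ge 1/2$ versus $\nu\ge 0$ enter. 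Clearing the nested radicals $\sqrt{\lambda^2+x^2}$ inside $\sqrt{\delta_\alpha^2+x^2}$ produces a two-radical inequality, not immediately a polynomial one, and each squaring step needs a sign check. Until that computation is carried through (or explicitly delegated to \cite[Thms.\ 5--6]{Ruiz:2016:ANT}), the proof is a correct derivation of the candidate bound rather than a proof that it is a bound.
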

The accuracy of the first bound is $(1,3)$ while the second bound has accuracy $(1,2)$. We may compare these bounds with 
the bounds of equal
accuracy obtained in \cite{Segura:2023:SBW}, particularly 
with the bound in Theorem \ref{gapk} for the $(1,3)$ case, which is also a lower bound
of this same accuracy, and with the third bound in Table 1. In both cases, comparing the expansions at $x=0$ and $x=+\infty$ we
conclude that the bounds of the previous theorem are generally better (though more complicated). Numerical tests show that indeed, 
the $(1,2)$ bound of the previous theorem
appears to be better for all $x$ and $\nu>0$ than the bound in Table 1, and that the $(1,3)$ bound of the previous theorem is
also superior when $\nu>3/2$.

The bounds from the previous theorem can be improved 
by applying the recurrence relation in the backward direction. We refer to 
\cite{Ruiz:2016:ANT} for the explicit result. The bounds improve their accuracy at $x=0$ by one unit with respect to the bounds
in Theorem \ref{newbi}, and therefore they have accuracies $(2,3)$ and $(2,2)$. We also refer to \cite{Ruiz:2016:ANT}, Theorem 9, for bounds of the same type for the second kind Bessel function.

\subsubsection{Trigonometric bounds from double ratios}

Similarly as described in Section \ref{beric}, the analysis of the first order ODE satisfied by double ratios of modified Bessel
functions can be used to obtain very sharp trigonometric bounds for ratios of modified Bessel functions, as was described
in \cite{Segura:2021:MPF}. 

We start from the
ratio, $\Phi_{\nu}(x)={\mathcal I}_{\nu-1}(x)/{\mathcal I}_{\nu}(x)$, where ${\mathcal I}_{\nu}(x)$ can be any of the solutions of
(\ref{DDE}). Then, using (\ref{DDE}) one can easily prove that the double ratio 
$$
W_{\nu}(x)=\Frac{\Phi_{\nu}(x)}{\Phi_{\nu+1}(x)}
$$
satisfies the first order ODE
$$
W_{\nu}'(x)=-\Frac{2}{x^3}\left(\psi_\nu (x)^3+\psi_\nu (x)^2-(\nu^2+x^2)\psi_\nu (x)-\nu^2\right)
$$
where
$$W_{\nu}(x)=\Frac{1}{x^2}\left(\psi_\nu (x)^2- \nu^2\right)$$
and we also have
$$
\psi_{\nu}(x)=x\Phi_{\nu}(x)-\nu=x\Frac{{\mathcal I}_{\nu}'(x)}{{\mathcal I}_{\nu}(x)}.
$$
We refer to \cite{Segura:2021:MPF} for further details.

A qualitative analysis of the solution of the ODE for $W_{\nu}(x)$ together with the behavior of the
solutions as $x\rightarrow 0$ and $\rightarrow +\infty$ provides both information on the monotonicity of the
ratios as well as trigonometric bounds for these simple or double ratios. 
In particular, for the first and second
kind Bessel functions, the following result was proved in \cite{Segura:2021:MPF}: 

\begin{theorem}
\label{cotastrig}
For $x>0$ and $\nu\ge 0$ the following holds:
\begin{equation}
\begin{array}{l}
\Frac{I_{\nu-1}(x)}{I_\nu (x)}<\Frac{2g_\nu (x)}{3x}\cos\left(\Frac{1}{3}\arccos\left(
\Frac{h_{\nu}(x)}{g_\nu (x)^3}
\right)\right)+\Frac{\nu-1/3}{x},\\
\Frac{K_{\nu-1}(x)}{K_\nu (x)}<\Frac{2g_\nu (x)}{3x}\cos\left(\Frac{1}{3}\arccos\left(

\Frac{h_{\nu}(x)}{g_\nu (x)^3}
\right)-\Frac{\pi}{3}\right)-\Frac{\nu-1/3}{x},
\end{array}
\end{equation}
where $g_\nu (x)=\sqrt{3(\nu^2+x^2)+1}$, $h_{\nu}(x)=9\nu^2-\Frac{9}{2}x^2-1$.
\end{theorem}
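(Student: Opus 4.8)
The plan is to read both bounds off the cubic nullcline of the first order ODE for the double ratio $W_\nu(x)$ recalled above, writing its roots in trigonometric (Vi\`ete) form, and then to trap the solution $\psi_\nu(x)=x\cali_\nu'(x)/\cali_\nu(x)$ against the appropriate root by the same kind of qualitative argument used in Theorems \ref{second} and \ref{prime} and in Lemma \ref{lemmaprin}.

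First I would carry out the purely algebraic identification. The nullcline of the ODE for $W_\nu$ is the zero set of
$$
C(\psi)=\psi^3+\psi^2-(\nu^2+x^2)\psi-\nu^2.
$$
Depressing the cubic with $\psi=\tau-\tfrac13$ removes the quadratic term and gives $\tau^3-\tfrac13 g_\nu(x)^2\,\tau+q(x)=0$ with $g_\nu(x)^2=3(\nu^2+x^2)+1>0$, whose three real roots are
$$
\tau_k(x)=\Frac{2g_\nu(x)}{3}\cos\left(\Frac13\arccos\left(\Frac{h_\nu(x)}{g_\nu(x)^3}\right)-\Frac{2\pi k}{3}\right),\quad k=0,1,2,
$$
a short computation identifying the argument through $h_\nu(x)=9\nu^2-\tfrac92 x^2-1$; the roots of $C$ are $r_k=\tau_k-\tfrac13$ with $r_0>r_1>r_2$. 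Since $\cali_\nu=I_\nu$ gives $I_{\nu-1}/I_\nu=(\psi_\nu+\nu)/x$, and $\tfrac23 g_\nu\cos(\tfrac13\arccos(h_\nu/g_\nu^3))+\nu-\tfrac13=r_0+\nu$, the first inequality is exactly $\psi_\nu^{(I)}<r_0$; and since $\cali_\nu=e^{i\pi\nu}K_\nu$ in (\ref{DDE}) gives $K_{\nu-1}/K_\nu=(-\psi_\nu-\nu)/x$, while the identity $\tfrac23 g_\nu\cos(\tfrac13\arccos(h_\nu/g_\nu^3)-\tfrac{\pi}{3})=-\tau_2$ makes the right-hand side equal $(-r_2-\nu)/x$, the second inequality is exactly $\psi_\nu^{(K)}>r_2$. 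Thus the two claimed bounds assert that $\psi_\nu^{(I)}$ stays below the largest root of $C$ and $\psi_\nu^{(K)}$ stays above the smallest root.

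Next I would run the trapping argument. For the first-kind case $\psi_\nu^{(I)}>0$, with $\psi_\nu^{(I)}\to\nu$ as $x\to0^+$ and $\psi_\nu^{(I)}\sim x-\tfrac12$ as $x\to+\infty$; since $C$ factors as $(\psi+1)(\psi-\nu)(\psi+\nu)$ at $x=0$, the solution starts on the largest root $r_0(0)=\nu$, and it agrees with $r_0$ to leading order at $+\infty$ as well. Because $W_\nu'=-\tfrac{2}{x^3}C(\psi_\nu)$ changes sign across $r_0$, a first contact $\psi_\nu^{(I)}(x_0)=r_0(x_0)$ would force $W_\nu'(x_0)=0$, hence $(\psi_\nu^{(I)})'(x_0)=(r_0^2-\nu^2)/(r_0 x_0)$; comparing this value with $r_0'(x_0)$ contradicts $x_0$ being a first contact, exactly as in Lemma \ref{lemmaprin}, so $\psi_\nu^{(I)}<r_0$ throughout and the first bound follows. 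The second-kind case is symmetric: $\psi_\nu^{(K)}<0$ with $\psi_\nu^{(K)}\to-\nu$ as $x\to0^+$ and $\psi_\nu^{(K)}\sim-x-\tfrac12$ as $x\to+\infty$, so $\psi_\nu^{(K)}$ is trapped above the smallest root $r_2$ (it starts at $r_2(0)=-\nu$ when $\nu\ge1$ and strictly above it when $\nu<1$), giving $\psi_\nu^{(K)}>r_2$ and hence the second bound.

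The hard part will be the qualitative step, not the algebra. Unlike the Riccati situation of Theorem \ref{second}, where a single positive root separates the plane, the cubic $C$ has three branches, and the contact obstruction above has a definite sign only once one knows that the relevant root is monotone in $x$ — that $r_0$ is increasing and $r_2$ decreasing — which must be extracted from the trigonometric representation by differentiating the $\cos(\tfrac13\arccos(\cdot))$ expression, the analogue of the monotonicity result \cite[Lemma 5]{Segura:2021:UVS}. One must also track that the ordering of the two negative roots $r_1,r_2$ switches at $\nu=1$, verify that the discriminant of $C$ stays positive for all $x>0$ so the roots remain real and distinct, and confirm that $\psi_\nu(x)$ never leaves the correct region bounded by consecutive roots. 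Assembling these facts, together with the admissible boundary data of $W_\nu$ at the chosen endpoint, is the genuine content of the proof, following \cite{Segura:2021:MPF}, the remainder being the routine identifications described above.
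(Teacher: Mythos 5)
Your proposal follows exactly the route the paper indicates (and which it attributes to \cite{Segura:2021:MPF}): depress the cubic nullcline of the ODE for the double ratio $W_\nu$, identify $g_\nu$ and $h_\nu$ through the Vi\`ete trigonometric form of the three real roots, translate the two stated bounds into $\psi_\nu^{(I)}<r_0$ and $\psi_\nu^{(K)}>r_2$ for the extreme roots, and trap the solution by a first-contact argument resting on the monotonicity of those roots; all of your algebraic identifications (the value of $h_\nu$, the identity $\cos(\alpha-4\pi/3)=-\cos(\alpha-\pi/3)$, the boundary data at $0$ and $+\infty$) check out. Since the paper itself gives no proof and defers entirely to the cited reference, your outline matches the intended argument and there is nothing further to compare.
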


The bound for the first kind function has accuracy $(3,2)$, and the accuracy for the second kind function is
$(2,2)$ when $\nu>1$. It is again possible to use the recurrence to improve the accuracy at $x=0$.

We notice that from the bounds for ratios of modified Bessel functions, it is possible to derive other types of bounds,
like for instance bounds for products of Bessel functions or bounds on Tur\'anians. With respect to the product, we 
mention that it is easy to prove that (see, for example, \cite[section 2.1]{Segura:2021:MPF})
$$
\Frac{I_{\nu-1}(x)}{I_{\nu}(x)}+\Frac{K_{\nu-1}(x)}{K_{\nu}(x)}=\Frac{1}{xI_{\nu}(x)K_{\nu}(x)},
$$
and then, as a consequence, 
\begin{corollary}
$$
I_{\nu}(x)K_{\nu}(x)>\Frac{\sqrt{3}}{2g_\nu (x)
\sin\left(\frac13\arccos\left(
\Frac{h_\nu (x)}{g_\nu (x)^3}\right)+\Frac{\pi}{3}\right)}>\Frac{1}{2\sqrt{x^2+\nu^2+\frac13}}
$$
where $g_\nu (x)=\sqrt{3(\nu^2+x^2)+1}$, $h_{\nu}(x)=9\nu^2-\Frac{9}{2}x^2-1$.
\end{corollary}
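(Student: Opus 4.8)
The plan is to combine the trigonometric upper bounds of Theorem~\ref{cotastrig} with the exact product identity
$$
\Frac{I_{\nu-1}(x)}{I_{\nu}(x)}+\Frac{K_{\nu-1}(x)}{K_{\nu}(x)}=\Frac{1}{xI_{\nu}(x)K_{\nu}(x)}
$$
that is stated just above the Corollary. Since both ratios on the left are bounded \emph{above} by the two expressions in Theorem~\ref{cotastrig}, their sum is bounded above as well, and inverting turns the upper bound on the sum into a lower bound on the product $I_\nu(x)K_\nu(x)$. First I would add the two right-hand sides of Theorem~\ref{cotastrig}. The terms $\pm(\nu-1/3)/x$ cancel exactly, leaving
$$
\Frac{2g_\nu(x)}{3x}\left[\cos\theta+\cos\!\left(\theta-\Frac{\pi}{3}\right)\right],\quad
\theta=\Frac13\arccos\!\left(\Frac{h_\nu(x)}{g_\nu(x)^3}\right).
$$

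The key simplification is a sum-to-product identity on the bracketed term. Using $\cos A+\cos B=2\cos\!\bigl(\tfrac{A+B}{2}\bigr)\cos\!\bigl(\tfrac{A-B}{2}\bigr)$ with $A=\theta$ and $B=\theta-\pi/3$ gives
$$
\cos\theta+\cos\!\left(\theta-\Frac{\pi}{3}\right)=2\cos\!\left(\theta-\Frac{\pi}{6}\right)\cos\!\left(\Frac{\pi}{6}\right)=\sqrt{3}\,\cos\!\left(\theta-\Frac{\pi}{6}\right).
$$
At this point I would recast $\cos(\theta-\pi/6)$ as a sine to match the stated form: since $\cos(\theta-\pi/6)=\sin(\pi/2-\theta+\pi/6)=\sin(2\pi/3-\theta)$, and noting that $2\pi/3-\theta$ and $\theta+\pi/3$ have the same sine only up to the reflection $\sin(2\pi/3-\theta)=\sin(\pi/3+\theta)$, I would verify that $\sin(2\pi/3-\theta)=\sin(\pi-(2\pi/3-\theta))=\sin(\pi/3+\theta)$, which reproduces exactly the argument $\tfrac13\arccos(\cdots)+\pi/3$ appearing in the Corollary. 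Combining, the sum of the two upper bounds equals
$$
\Frac{2g_\nu(x)}{3x}\cdot\sqrt{3}\,\sin\!\left(\Frac13\arccos\!\left(\Frac{h_\nu(x)}{g_\nu(x)^3}\right)+\Frac{\pi}{3}\right),
$$
and dividing $1/x$ by this (equivalently, inverting the bound on the sum) yields the first inequality of the Corollary after the factors of $x$ cancel.

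The second inequality, the algebraic lower bound $1/(2\sqrt{x^2+\nu^2+1/3})$, requires bounding the sine factor from above by a simple algebraic expression. Here I would use the second (algebraic) bounds that accompany the trigonometric ones in the spirit of Theorem~\ref{trip}, or equivalently bound $\sqrt3\,g_\nu(x)\sin(\cdots)$ from above directly. Writing $s=\sin(\tfrac13\arccos(h_\nu/g_\nu^3)+\pi/3)$, the target is the inequality $\sqrt3\,g_\nu(x)\,s\le 2\sqrt{x^2+\nu^2+1/3}\cdot\sqrt3/(\sqrt3)$, i.e. a uniform bound on $g_\nu(x)^2 s^2$ by $3(x^2+\nu^2+1/3)=g_\nu(x)^2$; thus one needs $s^2\le 1$, which is immediate, but to get the \emph{sharp} constant $1/3$ inside the radical one must control how close $s$ is to $1$ using the triple-angle relation $\cos(3\psi)=h_\nu/g_\nu^3$ with $\psi=\tfrac13\arccos(\cdots)$.

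The main obstacle will be the second inequality. The first is essentially a trigonometric identity together with the cancellation of the $(\nu-1/3)/x$ terms, and should go through cleanly. The algebraic lower bound, by contrast, demands a genuine estimate: one must show that the trigonometric expression is itself bounded below by the simple radical, uniformly in $x>0$ and $\nu\ge0$. I expect the cleanest route is to use the triple-angle identity to express $s$ implicitly via $\cos(\arccos(h_\nu/g_\nu^3))=h_\nu/g_\nu^3$, reduce the desired inequality to a polynomial inequality in $x^2$ and $\nu^2$ after clearing the radical and the arccos, and then verify nonnegativity of the resulting polynomial; this mirrors the "elementary algebra" step completing the proof of Theorem~\ref{newbp}. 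Should a direct manipulation prove unwieldy, I would instead derive the algebraic bound by applying the algebraic companion inequalities of Theorem~\ref{cotastrig} (analogous to the algebraic bound $\sqrt{x^2/4+g_n(x)}$ in Theorem~\ref{trip}) to each ratio separately and summing, which sidesteps the trigonometric reduction entirely.
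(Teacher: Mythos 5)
Your derivation of the first inequality is exactly the paper's (implicit) argument: the paper states the corollary as a direct consequence of the Wronskian identity and Theorem~\ref{cotastrig}, i.e.\ sum the two upper bounds, let the $\pm(\nu-1/3)/x$ terms cancel, apply $\cos\theta+\cos(\theta-\pi/3)=\sqrt{3}\cos(\theta-\pi/6)=\sqrt{3}\sin(\theta+\pi/3)$, and invert (noting $\sin(\theta+\pi/3)>0$ since $\theta\in[0,\pi/3]$). That part is correct and complete. The only flaw is that the ``main obstacle'' you anticipate for the second inequality does not exist: you correctly reduce it to $g_\nu(x)^2 s^2\le 3(x^2+\nu^2+\tfrac13)=g_\nu(x)^2$, i.e.\ to $s^2\le 1$, and that is the \emph{entire} proof --- the constant $\tfrac13$ is not sharp and requires no control on how close $s$ is to $1$; it is simply the constant already sitting inside $g_\nu(x)^2=3(\nu^2+x^2)+1$. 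The extra machinery you propose (triple-angle reduction to a polynomial inequality, or algebraic companion bounds, which Theorem~\ref{cotastrig} does not actually provide) is unnecessary; you appear to be conflating the stated $\tfrac13$ bound with the conjectured sharper constant $\tfrac15$ mentioned immediately after the corollary, which \emph{would} require such an analysis. A minor remark: since $\theta+\pi/3=\pi/2$ exactly when $h_\nu(x)=0$, the second inequality in the displayed chain degenerates to equality at that one point, though the overall lower bound on $I_\nu(x)K_\nu(x)$ remains strict because the first inequality is strict.
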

We notice that in \cite{Segura:2021:MPF} it was conjectured that
$$I_{\nu}(x)K_{\nu}(x)>\Frac{1}{2\sqrt {x^2+\nu^2+\frac15}}.$$

We don't discuss possible applications of these or the other bounds for bounding Tur\'anians. Bounds for Tur\'anians
are given, for example, in \cite{Segura:2011:BFR,Baricz:2015:BFT}.

\section{Confluent hypergeometric functions}

Modified Bessel functions and parabolic functions are particular cases of confluent hypergeometric functions.
It is therefore natural to analyze whether similar techniques are applicable to more general cases, depending on 
more than one parameter. We start with confluent hypergeometric functions, and in the last section we consider
the Gauss hypergeometric case. The goal is not to be exhaustive with the analysis, as done in the previous examples,
but to illustrate that similar techniques are also fruitful in more general cases.

For confluent hypergeometric functions, a first example of application of the technique based on Riccati equation 
is given in the Appendix of \cite{Segura:2016:SBF}; similar ideas were later considered in \cite{Sablica:2022:OBF}.
Related results can also be obtained by an analysis of the log-concavity and log-convexity of series, as described in \cite{Kalmykov:2013:LCA,Kalmykov:2013:LCF,Sra:2013:TMW}. 

Confluent hypergeometric functions are the solutions of the ODE
\begin{equation}
\label{ODE}
xy''(x)+(b-x)y'(x)-ay(x)=0.
\end{equation}
We consider the regular solution at the origin, that is, the Kummer confluent hypergeometric function
\begin{equation}
\label{series}
M(a,b,x)=\displaystyle\sum_{k=0}^{\infty} \Frac{(a)_n}{(b)_n n!}x^n.
\end{equation}

In our analysis, we prefer to consider an alternative normalization:
$$
m(a,b,x)=\Frac{\Gamma (a)}{\Gamma (b)}M(a,b,x).
$$

With this, we have that $m(a,b,x)$ satisfies the 
difference-differential relation
\begin{equation}
m'(a,b,x)=m(a+1,b+1,x),
\end{equation}
which, considering the differential equation (\ref{ODE}), leads to the recurrence relation
\begin{equation}
\label{TTRR}
xm(a+2,b+2,x)+(b-x)m(a+1,b+1,x)-am(a,b,x)=0.
\end{equation} 

Denoting
$$
h(a,b,x)=\Frac{m'(a,b,x)}{m(a,b,x)}=\Frac{m(a+1,b+1,x)}{m(a,b,x)},
$$
taking the derivative and using the differential equation (\ref{ODE}) we get
\begin{equation}
\label{Ricat}
h'(a,b,x)=\Frac{m''(a,b,x)}{m(a,b,x)}-h(a,b,x)^2=
\Frac{a}{x}+\left(1-\Frac{b}{x}\right)h(a,b,x)-h(a,b,x)^2.
\end{equation}

The characteristic roots of this Riccati equations, $\lambda$--solutions of 
$x\lambda ^2+(b-x)\lambda-a=0$, are
$$
\lambda_{\pm}(a,b,x)=\Frac{x-b\pm \sqrt{(x-b)^2+4ax}}{2x}.
$$
The relevant root will be the positive one, which from now on we denote by $\lambda (a,b,x)$. This root
is increasing if $b>a$, decreasing if $a>b$ and constant if $a=b$. In addition, we see that as
$x\rightarrow 0$
\begin{equation}
\label{la+0}
\lambda (a,b,x)=\Frac{a}{b}\left[1+\Frac{b-a}{b^2}x+{\mathcal O}(x^2)\right].
\end{equation}

On the other hand we have that
\begin{equation}
\label{hm0}
h (a,b,x)=\Frac{m(a+1,b+1,x)}{m(a,b,x)}=\Frac{a}{b}\left[1+\Frac{b-a}{b(b+1)}x+
\Frac{(b-a)(b-2a)}{b^2 (b+1)(b+2)}x^2+\dots \right],
\end{equation}
and therefore $h(a,b,0^+)>0$ and $h'(a,b,0^+)$ has the sign of $b-a$ (same monotonicity as $\lambda (a,b,x)$
close to $x=0$).

The information on the monotonicity of $\lambda (a,b,x)$ and the sign of $h(a,b,0^+)$ and $h'(a,b,0^+)$ 
 is enough to prove the following result, which is 
a direct consequence of Theorem \ref{prime} and which was described earlier in \cite[Thm. 3]{Segura:2016:SBF}.
\begin{theorem}
\label{bo1}
Let us assume that $a,b>0$. Then, $h(a,b,x)=m(a+1,b+1,x)/m(a,b,x)$ is monotonic as a function of $x>0$, 
$h (a,b,x)$ is strictly increasing if $b>a$, constant if $b=a$ and strictly decreasing if
$b<a$. The following inequalities hold for $x>0$:
\begin{enumerate}
\item{}$h(a,b,x)<\lambda (a,b,x)$ if $b>a$.
\item{}$h(a,b,x)=\lambda (a,b,x)=a/b$ if $b=a$.
\item{}$h(a,b,x)>\lambda (a,b,x)$ if $b<a$.
\end{enumerate}
\end{theorem}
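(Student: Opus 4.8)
The plan is to apply Theorem \ref{prime} with $\phi(x)=h(a,b,x)$, the solution of the Riccati equation (\ref{Ricat}), and with $\lambda(x)=\lambda(a,b,x)$ the positive characteristic root. The observation that makes everything collapse to a sign check is that, since $\lambda$ solves the characteristic equation, i.e. $P(x,\lambda(x))=0$ for $P(x,y)=\frac{a}{x}+\left(1-\frac{b}{x}\right)y-y^2$, we have $\Delta(x)=\lambda'(x)-P(x,\lambda(x))=\lambda'(x)$. Thus $\Delta(x)$ keeps a constant sign on $(0,\infty)$: positive when $b>a$, negative when $b<a$, and zero when $b=a$, exactly as recorded after (\ref{la+0}).

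Before invoking Theorem \ref{prime} I would verify its standing hypotheses on $\phi=h$. For $a,b>0$ and $x>0$ all coefficients of the series (\ref{series}) are positive, so $m(a,b,x)>0$ and $m(a+1,b+1,x)>0$; hence $h(a,b,x)>0$, and $h$ is continuous and therefore bounded on every compact subinterval of $(0,\infty)$. The degenerate case $b=a$ is then dispatched directly, since $m(a,a,x)=M(a,a,x)=e^{x}$ gives $h(a,a,x)=e^{x}/e^{x}=1=a/b=\lambda(a,a,x)$, which is part (2).

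For $b\ne a$ the only remaining ingredient is the sign of $\delta(0^+)$, where $\delta=\lambda-h$. Subtracting the expansions (\ref{la+0}) and (\ref{hm0}) I would read off the leading term,
$$
\delta(x)=\Frac{a}{b}\left(\Frac{b-a}{b^2}-\Frac{b-a}{b(b+1)}\right)x+{\mathcal O}(x^2)=\Frac{a(b-a)}{b^3(b+1)}x+{\mathcal O}(x^2),
$$
so $\delta(0^+)$ has the sign of $b-a$, which is the same sign as $\Delta=\lambda'$. Hence $\delta(0^+)\Delta(x)>0$ throughout $(0,\infty)$, and part (1) of Theorem \ref{prime} yields $\delta(x)\Delta(x)>0$; since $\Delta$ keeps its sign, so does $\delta$. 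This gives $\lambda>h$ when $b>a$ (part (1)) and $\lambda<h$ when $b<a$ (part (3)).

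It remains to deduce the stated monotonicity, which follows from the Riccati structure rather than from Theorem \ref{prime} itself. As a function of $y$, $P(x,y)$ is a downward-opening parabola whose roots $\lambda_-(a,b,x)$ and $\lambda(a,b,x)$ satisfy $\lambda_-<0<\lambda$ (their product being $-a/x<0$), so $P(x,y)>0$ precisely for $\lambda_-<y<\lambda$ and $P(x,y)<0$ for $y>\lambda$. When $b>a$ we have $0<h<\lambda$, hence $h'=P(x,h)>0$ and $h$ is strictly increasing; when $b<a$ we have $h>\lambda$, hence $h'=P(x,h)<0$ and $h$ is strictly decreasing; when $b=a$, $h\equiv 1$. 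I expect the main (though modest) obstacle to be the sign computation of $\delta(0^+)$: it is the single place where the two expansions must be compared carefully, and it is exactly what decides which of parts (1) and (3) holds, with everything else following by reading off signs from the characteristic equation.
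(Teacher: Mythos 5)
Your proof is correct. The paper itself gives no written-out argument here --- it asserts the theorem is ``a direct consequence of Theorem \ref{prime}'' given the monotonicity of $\lambda(a,b,x)$ and the signs of $h(a,b,0^+)$ and $h'(a,b,0^+)$ --- and your write-up is a faithful completion of that route: since $\lambda$ is the characteristic root, $\Delta=\lambda'$ has the constant sign of $b-a$, and the leading term $\delta(x)\sim a(b-a)x/(b^3(b+1))$ settles the sign of $\delta$ near $0$. Two small remarks. First, the ingredients the paper actually lists ($h(0^+)>0$, the sign of $h'(0^+)$, and the monotonicity of $\lambda$) are exactly the hypotheses of Theorem \ref{second}, which delivers both the inequality \emph{and} the monotonicity $\lambda'h'>0$ in one stroke; that is arguably the intended (and slightly shorter) argument, whereas your route needs the separate parabola-sign step at the end to recover monotonicity --- a step you carry out correctly. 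Second, note that literally $\delta(0^+)=0$, so the hypothesis ``$\delta(a^+)\Delta(x)>0$'' of Theorem \ref{prime} must be read as ``$\delta$ has the sign of $\Delta$ in a right-neighbourhood of the endpoint''; your leading-order computation supplies exactly that, and this is also how the paper itself applies Theorem \ref{prime} in the proof of Theorem \ref{newbp}, so there is no gap.
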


As in previous examples, the recurrence relation can be used to obtain further bounds.
We write the recurrence (\ref{TTRR}) as
$$
h(a,b,x)=\Frac{a}{b-x+xh(a+1,b+1,x)}
$$
which is equivalent to applying the recurrence in the backward direction. 

We denote 
$$
\begin{array}{ll}
\tilde{\lambda} (a,b,x)&=\Frac{a}{b-x+x\lambda (a+1,b+1,x)}\\
\\
&= \Frac{2a}{b-x-1+\sqrt{(x-b-1)^2+4(a+1)x}},
\end{array}
$$
which is positive for $a,b,x>0$.

\begin{theorem}
\label{bo3}
Let $a,b,x>0$, then 
$$\tilde{\lambda} (a,b,x)<h (a,b,x)<\lambda (a,b,x) \mbox{ if }b>a.$$ 
The inequalities are
reversed if $b<a$ and they become equalities if $a=b$.
\end{theorem}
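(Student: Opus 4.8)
The plan is to obtain both inequalities as a direct consequence of Theorem \ref{bo1} together with the backward recurrence
$$
h(a,b,x)=\Frac{a}{b-x+xh(a+1,b+1,x)},
$$
so that essentially no new qualitative analysis of an ODE is required. The upper inequality $h(a,b,x)<\lambda(a,b,x)$ for $b>a$ is already the content of Theorem \ref{bo1}, so the only genuine work lies in the lower bound involving $\tilde{\lambda}$.

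First I would record the two positivity facts needed throughout: $h(a,b,x)>0$, because $m(a,b,x)$ is a sum of strictly positive terms for $a,b,x>0$, and $\tilde{\lambda}(a,b,x)>0$, which is read directly from its explicit expression. Next I apply Theorem \ref{bo1} at the shifted parameters $(a+1,b+1)$: since $b>a$ is equivalent to $b+1>a+1$, that theorem gives $h(a+1,b+1,x)<\lambda(a+1,b+1,x)$ for all $x>0$. Multiplying by $x>0$ and adding $b-x$ yields
$$
b-x+xh(a+1,b+1,x)<b-x+x\lambda(a+1,b+1,x).
$$
Both sides are positive: the left equals $a/h(a,b,x)$ and the right equals $a/\tilde{\lambda}(a,b,x)$, each a quotient of positive quantities. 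Taking reciprocals therefore reverses the inequality, and multiplying by $a>0$ gives
$$
h(a,b,x)=\Frac{a}{b-x+xh(a+1,b+1,x)}>\Frac{a}{b-x+x\lambda(a+1,b+1,x)}=\tilde{\lambda}(a,b,x),
$$
which is precisely the desired lower bound.

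The remaining two cases run identically. For $b<a$ every inequality reverses: Theorem \ref{bo1} now yields $h(a+1,b+1,x)>\lambda(a+1,b+1,x)$, so the denominator appearing in the recurrence exceeds the one defining $\tilde{\lambda}$, and the reciprocal step produces $h(a,b,x)<\tilde{\lambda}(a,b,x)$, while the upper inequality of Theorem \ref{bo1} becomes $h(a,b,x)>\lambda(a,b,x)$; the full chain is thus $\lambda<h<\tilde{\lambda}$. For $a=b$ everything collapses to equalities, since at the shifted parameters $a+1=b+1$ forces $h(a+1,b+1,x)=\lambda(a+1,b+1,x)=1$, the two denominators coincide, and $\lambda(a,b,x)=\tilde{\lambda}(a,b,x)=a/b$.

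The one subtlety that deserves care is the sign of the denominator $b-x+xh(a+1,b+1,x)$. The constant coefficient $b-x$ is negative once $x>b$, so one cannot conclude positivity termwise, and the general recipe for the backward recurrence stated earlier in Section~2 (which assumed a positive coefficient in the denominator) does not apply verbatim here. Instead, positivity of the whole denominator must be deduced from the positivity of $h$ and of $\tilde{\lambda}$, as done above. Once this is secured the reciprocal inversion is immediate, and the proof becomes a short and entirely mechanical chain of inequalities.
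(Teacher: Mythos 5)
Your proof is correct and follows exactly the route the paper intends: the theorem is stated immediately after the paper introduces the backward recurrence $h(a,b,x)=a/(b-x+xh(a+1,b+1,x))$ and defines $\tilde{\lambda}$ by substituting $\lambda(a+1,b+1,x)$ into that denominator, so the argument is precisely Theorem \ref{bo1} at shifted parameters plus inversion of positive quantities. Your extra remark on why the denominator's positivity must come from $h>0$ and $\tilde{\lambda}>0$ rather than from termwise positivity is a sensible clarification but does not change the approach.
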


Let us now write Theorem \ref{bo3} in terms of the Kummer function:

\begin{theorem}
\label{Ku}
Let $a,b,x>0$. The Kummer function satisfies the inequalities
$$
b-x+\sqrt{(b-x)^2+4ax}<2b\Frac{M(a,b,x)}{M(a+1,b+1,x)}<b-x-1+\sqrt{(x-b-1)^2+4(a+1)x}
$$
if $b>a$ and the inequalities are reversed if $b<a$. The inequalities turn to equalities if $a=b$.
\end{theorem}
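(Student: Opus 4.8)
The plan is to reduce Theorem \ref{Ku} to Theorem \ref{bo3} by a purely algebraic translation, since the two statements describe the same inequalities written in different normalizations. First I would make explicit the relation between the ratio $h(a,b,x)$ and the Kummer ratio. Using $m(a,b,x)=\frac{\Gamma(a)}{\Gamma(b)}M(a,b,x)$ together with $\Gamma(a+1)=a\Gamma(a)$ and $\Gamma(b+1)=b\Gamma(b)$, one finds
$$
h(a,b,x)=\frac{m(a+1,b+1,x)}{m(a,b,x)}=\frac{a}{b}\,\frac{M(a+1,b+1,x)}{M(a,b,x)},
$$
so that
$$
2b\,\frac{M(a,b,x)}{M(a+1,b+1,x)}=\frac{2a}{h(a,b,x)}.
$$
This identity is the bridge: every bound on $h$ becomes, after taking reciprocals and scaling by $2a$, a bound on the displayed Kummer ratio.

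The second step is to invoke Theorem \ref{bo3}. For $b>a$ it gives $\tilde{\lambda}(a,b,x)<h(a,b,x)<\lambda(a,b,x)$, with all three quantities positive for $a,b,x>0$. Taking reciprocals reverses the inequalities, and multiplying by $2a>0$ yields
$$
\frac{2a}{\lambda(a,b,x)}<\frac{2a}{h(a,b,x)}=2b\,\frac{M(a,b,x)}{M(a+1,b+1,x)}<\frac{2a}{\tilde{\lambda}(a,b,x)}.
$$
For $b<a$ the chain in Theorem \ref{bo3} is reversed, hence so is the chain above, and for $a=b$ the inequalities collapse to equalities; this accounts for all three cases in the statement.

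It remains to identify the outer quantities $2a/\lambda$ and $2a/\tilde{\lambda}$ with the explicit radicals in the theorem. For the lower end I would rationalize
$$
\frac{2a}{\lambda(a,b,x)}=\frac{4ax}{x-b+\sqrt{(x-b)^2+4ax}},
$$
multiplying numerator and denominator by $-(x-b)+\sqrt{(x-b)^2+4ax}$; the denominator collapses to $4ax$ and leaves exactly $b-x+\sqrt{(b-x)^2+4ax}$. For the upper end the definition of $\tilde{\lambda}$ already has the required shape, so $2a/\tilde{\lambda}(a,b,x)=b-x-1+\sqrt{(x-b-1)^2+4(a+1)x}$ immediately. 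Inserting these two expressions into the displayed chain gives precisely the inequalities asserted in Theorem \ref{Ku}.

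The only point requiring care, rather than genuine difficulty, is bookkeeping the reversal of the inequalities when passing to reciprocals, and checking that $\lambda$ and $\tilde{\lambda}$ remain positive so that this operation is legitimate; the latter is guaranteed for $a,b,x>0$ directly from their definitions. There is no analytic obstacle here, as all the nontrivial monotonicity and Riccati-comparison work has already been absorbed into Theorem \ref{bo3}.
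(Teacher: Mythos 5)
Your proposal is correct and follows exactly the route the paper intends: Theorem \ref{Ku} is presented there as nothing more than Theorem \ref{bo3} rewritten in terms of the Kummer function via the normalization $m(a,b,x)=\Gamma(a)M(a,b,x)/\Gamma(b)$, which gives $2b\,M(a,b,x)/M(a+1,b+1,x)=2a/h(a,b,x)$, and the reciprocation plus rationalization you carry out is precisely the (omitted) bookkeeping. The only difference is that you spell out the algebra the paper leaves implicit.
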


We can, as before, measure the accuracy of the bounds by checking how many terms coincide in
the expansions at $x=0$ and $x=+\infty$

 Starting with the upper bound $\lambda (a,b,x)$, comparing (\ref{la+0}) with (\ref{hm0})
 we see that the first term in the expansions at $x=0$ coincide. On the other hand, as 
 $x\rightarrow +\infty$,
 \begin{equation}
 \label{lambdap}
 \lambda (a,b,x)=1-\Frac{b-a}{x}+\Frac{a(b-a)}{x^2}+{\mathcal O}(x^{-3}),
 \end{equation}
 and the first two terms coincide with the expansion of $h (a,b,x)$,
 which is
 \begin{equation}
\label{hmi}
h (a,b,x) =1-\Frac{b-a}{x}+\Frac{(a-1)(b-a)}{x^2}+{\mathcal O}(x^{-3}).
\end{equation}
  With this, we can say that the accuracy of the upper bound is $(1,2)$.
 
 With respect to the accuracy of the lower bounds
  $\tilde{\lambda} (a,b,x)$ for $h (a,b,x)$, considering that
 $$
 \tilde{\lambda} (a,b,x)=\Frac{a}{b}+\Frac{a(b-a)}{b^2 (b+1)}x+{\mathcal O}(x^2)
 $$
 and comparing with (\ref{hm0})
 we see that the first two terms coincide (the third term is not shown but it
 is different). On the other hand,
 $$
 \tilde{\lambda} (a,b,x)=1+\Frac{a(a-b+1)-b}{ax}+{\mathcal O}(x^{-2})
 $$
 and the first term coincides with (\ref{hmi}). Therefore the accuracy for the lower bound $\tilde{\lambda}(a,b,x)$ is $(2,1)$.

 \subsection{Further bounds}
 
 As happened for the case of modified Bessel functions, the bounds that are obtained by the use of the Riccati equation and the
 application of the recurrence relation are of type $(1,2)$ and $(2,1)$. It is natural to ask if, as in the case of Bessel functions, 
 we can obtain uniparametric bounds which continuously connect the $(1,2)$ 
 to $(3,0)$ cases and the $(2,1)$ to 
 the $(0,3)$. We will not answer this question here, but we advance one result in this
  direction, which is the obtention of a bound of the type $(0,3)$.

 A candidate for such $(0,3)$ bound is 
 $\lambda (a-1,b-1,x)$ because, considering (\ref{lambdap}) and (\ref{hmi}) we indeed 
 observe that the first three terms coincide. Considering an additional term in the expansion, we see that
 as $x\rightarrow +\infty$
 \begin{equation}
 \label{de1}
 h (a,b,x)-\lambda (a-1,b-1,x)=\Frac{(a-1)(b-a)}{x^3}+{\mathcal O}(x^{-4}),
 \end{equation}
 if $a>1$, and as $x\rightarrow 0$
 \begin{equation}
 \label{de2}
 h (a,b,x)-\lambda (a-1,b-1,x)=\Frac{b-a}{(b-1)b}+{\mathcal O}(x)
 \end{equation}
 if $b>1$.
 
 \begin{theorem} 
 Let $x>0$ and $a,b>1$, then 
 $B^{(0,3)}(x)\equiv \lambda (a-1,b-1,x)<h (a,b,x)$ if $a<b$, the inequality is reversed if $a>b$ and becomes an
  equality if $a=b$.
 \end{theorem}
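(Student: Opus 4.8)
The plan is to apply Theorem \ref{prime} on the interval $(0,+\infty)$ to the solution $\phi(x)=h(a,b,x)$ of the Riccati equation \eqref{Ricat}, written as $\phi'(x)=P(x,\phi(x))$ with $P(x,y)=\frac{a}{x}+\left(1-\frac{b}{x}\right)y-y^2$, taking as candidate bound $\lambda(x)=\lambda(a-1,b-1,x)$. Since $m(a,b,x)>0$ for $a,b,x>0$, the ratio $h(a,b,x)$ is positive, continuous, and bounded on every compact subset of $(0,+\infty)$, so the hypotheses on $\phi$ in Theorem \ref{prime} are met. Writing $\delta(x)=\lambda(a-1,b-1,x)-h(a,b,x)$ and $\Delta(x)=\lambda'(x)-P(x,\lambda(x))$, the whole argument reduces to determining the signs of $\delta(0^+)$ and of $\Delta(x)$ on $(0,+\infty)$, and then invoking part (1) of Theorem \ref{prime}, which uses only the behavior at the left endpoint $x=0^+$.

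The key simplification is to eliminate $\lambda(a-1,b-1,x)^2$ using its own characteristic equation $x\lambda^2+((b-1)-x)\lambda-(a-1)=0$. Substituting $\lambda^2=\left(1-\frac{b-1}{x}\right)\lambda+\frac{a-1}{x}$ into $P(x,\lambda)$ collapses it to the remarkably simple expression $P(x,\lambda)=\frac{1-\lambda}{x}$. Implicit differentiation of the characteristic equation then gives $\lambda'(x)=\frac{\lambda(1-\lambda)}{S}$, where $S=\sqrt{(x-(b-1))^2+4(a-1)x}$ (and $S>0$ because $a>1$). Combining the two yields the factorized form $\Delta(x)=(1-\lambda)\,\frac{x\lambda-S}{xS}$; since $x\lambda=\tfrac12\bigl(x-(b-1)+S\bigr)$ one has $x\lambda-S=\tfrac12\bigl(x-(b-1)-S\bigr)<0$ (because $S>\lvert x-(b-1)\rvert$), so that $\operatorname{sign}\Delta(x)=-\operatorname{sign}(1-\lambda)$ throughout the interval.

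It remains to pin down $\operatorname{sign}(1-\lambda)$ and $\operatorname{sign}\delta(0^+)$. For the former, evaluating $Q(\lambda)=x\lambda^2+((b-1)-x)\lambda-(a-1)$ at $\lambda=1$ gives $Q(1)=b-a$; as the two roots straddle $0$ (their product is $-(a-1)/x<0$, which is where $a>1$ enters) and $\lambda(a-1,b-1,x)$ is the positive one, the sign of $1-\lambda$ equals that of $Q(1)=b-a$, whence $\operatorname{sign}\Delta(x)=-\operatorname{sign}(b-a)$. For the latter, the limits \eqref{la+0} and \eqref{hm0} give $\delta(0^+)=\frac{a-1}{b-1}-\frac{a}{b}=\frac{a-b}{b(b-1)}$ (consistent with \eqref{de2}), whose sign is also $-\operatorname{sign}(b-a)$; here the hypothesis $b>1$ guarantees this value is finite with the asserted sign. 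Consequently $\delta(0^+)\Delta(x)>0$ on $(0,+\infty)$ whenever $a\neq b$, so part (1) of Theorem \ref{prime} forces $\delta(x)\Delta(x)>0$ there, giving $\operatorname{sign}\delta(x)=\operatorname{sign}\Delta(x)=-\operatorname{sign}(b-a)$. This is exactly $\lambda(a-1,b-1,x)<h(a,b,x)$ when $a<b$ and the reverse when $a>b$, while the case $a=b$ is immediate since both sides reduce to $1$. The only genuine obstacle is the bookkeeping of signs, and this is entirely controlled once the characteristic-equation substitution reduces $\Delta$ to the single factorized expression above; no delicate estimate is needed beyond $S>\lvert x-(b-1)\rvert$.
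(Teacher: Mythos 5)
Your proof is correct, and it follows the same overall strategy as the paper: apply Theorem \ref{prime} on $(0,+\infty)$ with the candidate $\lambda(a-1,b-1,x)$, fix the sign of $\delta(0^+)$ from the expansions at the origin, and show that $\Delta$ keeps a constant sign determined by $b-a$. The difference is in how the two arguments handle the crucial sign of $\Delta$. The paper passes to the reciprocals $q=1/B^{(0,3)}$ and $\phi=1/h$, works with the Riccati equation for $1/h$, and determines the sign of the resulting $\Delta$ somewhat indirectly, by observing that as a function of the parameter $a$ it vanishes only at $a=b$ and that $\partial\Delta/\partial a<0$ there. You stay with $h$ itself and exploit the characteristic equation of $\lambda(a-1,b-1,x)$ to collapse $P(x,\lambda)$ to $(1-\lambda)/x$ and $\lambda'$ to $\lambda(1-\lambda)/S$, which yields the closed factorization $\Delta=(1-\lambda)\,(x\lambda-S)/(xS)$ with $x\lambda-S<0$; the sign of $\Delta$ is then read off from $Q(1)=b-a$ together with the root structure forced by $a>1$. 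Your route makes the ``some algebra'' of the paper fully explicit and localizes exactly where each hypothesis enters ($a>1$ for $S>0$ and for the roots to straddle $0$, $b>1$ for the sign of $\delta(0^+)$), at the cost of nothing; the paper's parameter-differentiation trick is shorter to state but leaves the verification to the reader. Both are valid proofs of the statement.
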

 \begin{proof}
 We exclude the trivial case $a=b$.
 
We define
 $$
 \begin{array}{ll}
 \delta (x)=q(x)-\phi (x),\\
 q(x)=1/B^{(0,3)}(x)=\Frac{b-1-x+\sqrt{(x-b+1)^2+4(a-1)x}}{2(a-1)},\\
 \phi (x)=1/h(a,b,x),
 \end{array}
 $$ 
 where $\phi (x)$ satisfies
$$
\phi' (x)=1-\left(1-\Frac{b}{x}\right)\phi (x)-\Frac{a}{x}\phi (x)^2. 
$$

 Considering (\ref{de1}) and (\ref{de2}) we know that $\delta (0^+)$ has the same sign as $b-a$, which is the sign of 
 $\delta (+\infty)$ only if $a>1$. Therefore, we can only have a bound if $a>1$. Next we prove that $\lambda (a-1,b-1,x)$
 is always a bound in that case provided $b>1$.

  Now, for proving the result we apply Theorem \ref{prime}. Because 
  the sign of $\delta (0^+)$ is the same as
the sign of $b-a$, all that remains to be proved is that $(b-a)\Delta(x)>0$ for $x>0$, 
with 
$$
\Delta (x)=q'(x)-1+\left(1-\Frac{b}{x}\right)q(x)+\Frac{a}{x}q^2(x).
$$

After some algebra, one can verify that, as a function of $a$, $\Delta(x)$ only vanishes
at $a=b$ and that 
$$
\left.\Frac{\partial \Delta}{\partial a}\right|_{a=b}=-\Frac{b-1}{x(b-1+x)}<0 \mbox{ if } b>1.
$$
Therefore $\Delta (x)$ has the same sign as $b-a$, which ends the proof.
  
\end{proof}

\subsection{Bounds for other ratios of contiguous functions}

So far, we have considered bounds for the ratios $m(a+1,b+1,x)/m(a,b,x)$, but we could
also consider other ratios like, for instance, $m(a+1,b,x)/m(a,b,x)$
or $m(a+1,b+2,x)/m(a,b,x)$, this last case being
 related to modified Bessel functions, as we will see. The different cases can be related through 
recurrence relations. 

For example, the relation \cite[13.3.4]{Daalhuis:2010:CHF} can be written
\begin{equation}
\label{re1}
m(a+1,b,x)-am(a,b,x)-x m(a+1,b+1,x)=0,
\end{equation}
and therefore
$$
\Frac{m(a+1,b,x)}{m(a,b,x)}=a+x\Frac{m(a+1,b+1,x)}{m(a,b,x)}.
$$
With this the bounds we have obtained so far translate easily to bounds for the ratios $m(a+1,b,x)/m(a,b,x)$
(related with the results of \cite[Thm. 4]{Segura:2016:SBF}). The accuracy of the bounds
 is maintained because there are no subtractions. This is not always the case, and we will illustrate this 
 with the recurrence
 related to the modified Bessel functions.

We consider now the relation \cite[13.3.4]{Daalhuis:2010:CHF}, which we write in the form
\begin{equation}
\label{re2}
m(a+1,b,x)-(b+x)m(a+1,b+1,x)+x(b-a)m(a+1,b+2,x)=0.
\end{equation}

Combining Eqs. (\ref{re1}) and (\ref{re2}) to eliminate $m(a+1,b,x)$ we get
\begin{equation}
a m(a,b,x)=b m(a+1,b+1,x)+x(a-b)m(a+1,b+2,x),
\end{equation}
from which we have
\begin{equation}
\label{12}
\Frac{m(a+1,b+2,x)}{m(a,b,x)}=\Frac{1}{x(a-b)}\left(a-b\Frac{m(a+1,b+1,x)}{m(a,b,x)}\right).
\end{equation}

Using the bounds of Theorem \ref{bo3} on the right-hand side of this equation we get
bounds for $H(a,b,x)=\Frac{m(a+1,b+2,x)}{m(a,b,x)}$. Unlike the case of Theorem 
\ref{bo3} we don't get that the inequalities are reversed when going from the case $a<b$ 
to the case $a>b$ for the $M$ function (notice the $a-b$ in the denominator of (\ref{12})).

We start with the upper bound for $h(a,b,x)$, $\lambda (a,b,x)$, and write
$$
\lambda (a,b,x)=1+f,\,f=\Frac{\sqrt{(x-b)^2+4ax}-x-b}{2x},
$$
which we can also write, denoting $\delta=a-b$
$$
f=\Frac{\sqrt{(x+b)^2+4\delta x}-x-b}{2x}=\Frac{2\delta}{x+b+\sqrt{(x-b)^2+4ax}}.
$$
With this we get the upper bound for $H(a,b,x)$:
$$
\eta (a,b,x)\equiv \Frac{1}{x(a-b)}\left(a-b\lambda (a,b,x)\right)=\Frac{1}{x}\left(1-\Frac{2b}{x+b+\sqrt{(x-b)^2+4ax}}\right).
$$

Proceeding similarly with the lower bound $\tilde{\lambda}(a,b,x)$:
$$
\tilde{\lambda} (a,b,x)=\Frac{2(b+\delta)}{2b+\Delta},
$$
where
$$
\Delta=\sqrt{(x+b+1)^2+4\delta x}-(x+b+1)=\Frac{4\delta x}
{x+b+1+\sqrt{(x+b+1)^2+4\delta x}}.
$$

Now we can write 
$$\tilde{\lambda} (a,b,x)=1+g,\,g=\delta\Frac{1-p}{b+\delta p},\,p=\Frac{2x}{x+b+1+\sqrt{(x+b+1)^2+4(a-b) x}},$$
and we get the lower bound for $H(a,b,x)$ 
$$
\tilde{\eta} (a,b,x)=\Frac{1}{x(a-b)}\left(a-b\tilde{\lambda} (a,b,x)\right)=\Frac{ap}{x(ap+b(1-p))}.
$$

Then we have the following result

\begin{theorem}
For $a,b,x>0$ the following holds
$$\tilde{\eta} (a,b,x)<\Frac{m(a+1,b+2,x)}{m(a,b,x)}<\eta (a,b,x).$$
\end{theorem}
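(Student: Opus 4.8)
The plan is to obtain the bounds for $H(a,b,x)=m(a+1,b+2,x)/m(a,b,x)$ by simply transporting the two‑sided bounds for $h(a,b,x)=m(a+1,b+1,x)/m(a,b,x)$ of Theorem \ref{bo3} through the linear relation (\ref{12}). Writing that relation as $H(a,b,x)=T\!\left(h(a,b,x)\right)$ with the affine map
$$
T(s)=\Frac{a-bs}{x(a-b)},
$$
I note that, by the very definitions preceding the statement, $\eta(a,b,x)=T(\lambda(a,b,x))$ and $\tilde{\eta}(a,b,x)=T(\tilde{\lambda}(a,b,x))$. Hence the whole claim reduces to applying $T$ to the chain of inequalities furnished by Theorem \ref{bo3}. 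The only data that matter are the slope $T'(s)=-b/(x(a-b))$, whose sign is opposite to that of $a-b$ since $b,x>0$, and the direction of the inequalities in Theorem \ref{bo3}, which itself flips across $a=b$.

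First I would treat $b>a$. Then $a-b<0$, so $T$ is strictly increasing, while Theorem \ref{bo3} gives $\tilde{\lambda}(a,b,x)<h(a,b,x)<\lambda(a,b,x)$; applying the increasing map $T$ preserves the order and yields $\tilde{\eta}(a,b,x)<H(a,b,x)<\eta(a,b,x)$. Next I would treat $b<a$. Now $a-b>0$, so $T$ is strictly decreasing, but Theorem \ref{bo3} is also reversed, reading $\lambda(a,b,x)<h(a,b,x)<\tilde{\lambda}(a,b,x)$; applying the decreasing map $T$ reverses this a second time, again giving $\tilde{\eta}(a,b,x)<H(a,b,x)<\eta(a,b,x)$. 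This is exactly the phenomenon flagged after (\ref{12}): the reversal of the bounds for $h$ as one crosses $a=b$ is precisely cancelled by the sign change of $a-b$ in the denominator of $T$, so that $\eta$ remains the upper and $\tilde{\eta}$ the lower bound in both regimes. Strictness is automatic, since $T$ is strictly monotone ($b>0$) and the inequalities of Theorem \ref{bo3} are strict for $a\neq b$.

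It then remains to dispose of the degenerate value $a=b$, where (\ref{12}) and $T$ are undefined. Here I would evaluate the endpoints directly: substituting $a=b$ collapses the surds and gives $\eta(b,b,x)=1/(x+b)$ and $\tilde{\eta}(b,b,x)=1/(x+b+1)$, which are strictly separated for all $x>0$. Since $\eta$, $\tilde{\eta}$, and $H$ are continuous in $a$ for fixed $b,x>0$, letting $a\to b$ squeezes $H(b,b,x)$ into the closed interval between them; to upgrade this to the strict statement of the theorem I would compare the power series of $H(b,b,x)=e^{-x}\sum_{k\ge0}x^{k}/\big((b+1+k)k!\big)$ against the two explicit rational bounds, noting that the lower bound is touched only in the limit $x\to0^{+}$ and that $H$ stays strictly inside for every $x>0$.

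The step I expect to be the main obstacle is not any hard estimate — the analytic work is entirely inherited from Theorem \ref{bo3} — but the careful bookkeeping of inequality directions across the two sign regimes of $a-b$: one must verify that the two independent reversals (one in Theorem \ref{bo3}, one in the slope of $T$) compose to the identity rather than doubling up, so that $\tilde{\eta}$ and $\eta$ do not inadvertently swap roles. The secondary subtlety is the clean treatment of the limiting case $a=b$, where continuity alone yields only non‑strict inequalities and a short direct series comparison is needed to recover strictness for $x>0$.
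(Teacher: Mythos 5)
Your proposal is correct and follows essentially the same route as the paper: the theorem is obtained by transporting the two-sided bounds of Theorem \ref{bo3} through the affine relation (\ref{12}), with the reversal of the inequalities across $a=b$ cancelled by the sign of $a-b$ in the denominator, exactly as the text preceding the statement indicates. Your explicit treatment of the degenerate case $a=b$ (where (\ref{12}) is a $0/0$ form and continuity alone gives only non-strict inequalities) is a point the paper passes over in silence, and is a worthwhile addition rather than a deviation.
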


It is instructive to compare now these bounds with those obtained earlier for modified Bessel functions \cite{Segura:2023:SBW} and 
summarized earlier in this paper.
 
Considering the relation \cite[13.6.9]{Daalhuis:2010:CHF}
$$
M(\nu+1/2,2\nu+1,2z)=\Gamma(1+\nu)e^{z}(z/2)^{-\nu}I_{\nu}(z)
$$
we have 
$$
\Frac{I_{\nu}(z)}{I_{\nu-1}(z)}=2z\Frac{m(a+1,b+2,2z)}{m(a,b,2z)},\,a=\nu-1/2,\,b=2\nu-1,
$$
and a straightforward computation shows that the upper bound $\eta (a,b,x)$ in this case corresponds 
to the bound $(0,2)$ in 
\cite[Table 3.1]{Segura:2023:SBW}.

With respect to the lower bound $\tilde{\eta}$, we obtain a bound of type $(1,1)$ which is not in 
\cite[Table 3.1]{Segura:2023:SBW}, namely
$$
\Frac{I_{\nu-1}(x)}{I_{\nu}(x)}<\Frac{\nu+\sqrt{x^2+\nu^2+x}}{x}.
$$
The bound $(1,1)$ of Table 3.1 of \cite{Segura:2023:SBW} is clearly better, because it does not have the
last sumand inside the square root. 

We observe that we have started with bounds with accuracies $(1,2)$ and $(2,1)$ for bounding $m(a+1,b+1,x)/m(a,b,x)$ and 
we have ended with $(0,2)$ and $(1,1)$ bounds for $m(a+1,b+2,x)/m(a,b,x)$. We conclude that for the particular case of the recurrence (\ref{12}), the relation 
with the case we have studied earlier is not convenient because a cancellation appears which reduces the accuracy at $x=0$. 
For this case, and surely for others, an independent analysis is convenient.

\section{Gauss hypergeometric function}

We finally provide some new bounds for the ratios of Gauss hypergeometric functions, and discuss their relation with the 
bounds we have described so far for the confluent hypergeometric case.

As a previous result on bounds of ratios of Gauss hypergeometric functions, we can mention 
\cite[example 3]{Kalmykov:2014:LCO}, where bounds for the ratio ${}_2 F_{1}(a+1,b;c+1;x)/{}_2 F_{1}(a,b;c;x)$ were 
 established. We expect that the analysis based on the qualitative analysis of the Riccati equations can also be used
 to obtain those results, but we choose as an illustration of the Riccati methods the case of the bounds for the
 ratio  ${}_2 F_{1}(a+1,b+1;c+1;x)/{}_2 F_{1}(a,b;c;x)$ and leave for a later analysis the cases of other ratios of 
 Gauss hypergeometric functions.

We define $y(a,b,c,x)=\Frac{\Gamma (a)\Gamma (b)}{\Gamma (c)}\,_{2}{\rm F}_1 (a,b;c;x)$, which satisfies the Gauss differential
equation

\begin{equation}
\label{bog}
x(1-x)y''(a,b,c,x)+\left[c-(a+b+1)x\right]y'(a,b,c,x)-aby(a,b,c,x)=0,
\end{equation}
and the difference-differential relation
$$
y'(a,b,c,x)=y(a+1,b+1,c+1).
$$
Combining both we have the recurrence relation
$$
x(1-x)y(a+2,b+2,c+2,x)+\left[c-(a+b+1)x\right]y(a+1,b+1,c+1,x)-aby(a,b,c,x)=0.
$$

Consider now the ratio 
$$
h(a,b,c,x)=y(a+1,b+1,c+1,x)/y(a,b,x)=y'(a,b,x)/y(a,b,x),
$$
we can write the recurrence relation as
\begin{equation}
\label{backG}
h(a,b,c,x)=\Frac{ab}{c-(a+b+1)x+x(1-x)h(a+1,b+1,c+1,x)}.
\end{equation}
For brevity, except when needed, we drop the parameters $a,b,c$ from the notation of $h(a,b,c,x)$.

Differentiating and using (\ref{bog}) we have
$$
h'(x)=-\Frac{1}{x(1-x)}\left[ x(1-x)h(x)^2+\left[c-(a+b+1)x\right]h(x)-ab\right].
$$
We consider the positive root of the characteristic equation
$$
\lambda (x)=\Frac{(a+b+1)x-c+\sqrt{((a+b+1)x-c)^2+4abx(1-x)}}{2 x(1-x)}.
$$

As $x\rightarrow 0$ we have
\begin{equation}
\label{hgauss}
h(x)=\Frac{a b}{c}\left(1+\Frac{c(a+b+1)-ab}{c(c+1)}x+{\mathcal O}(x^2)\right).
\end{equation}

On the other hand, for $c>0$,
\begin{equation}
\label{lamg}
\lambda (x)=\Frac{a b}{c}\left(1+\Frac{c(a+b+1)-ab}{c^2}x+{\mathcal O}(x^2)\right).
\end{equation}

With this and the following lemma we will have the basic ingredients for obtaining a first bound
for the ratio $h(x)$.

\begin{lemma}
If $a,b,c>0$ with $c>ab/(a+b+1)$ then $\lambda (x)$ is increasing in $[0,1)$.
\end{lemma}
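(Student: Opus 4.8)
The plan is to establish $\lambda'(x)>0$ on $(0,1)$ by implicit differentiation of the quadratic that defines $\lambda$, and to treat the endpoint $x=0$ separately using the expansion (\ref{lamg}). Writing $s=a+b+1$ and
$$
P(x,\lambda)=x(1-x)\lambda^2+(c-sx)\lambda-ab,
$$
so that $\lambda(x)$ is the positive root of $P(x,\lambda)=0$, I would first note that for $x\in(0,1)$ the parabola $P(x,\cdot)$ opens upward and has negative constant term $-ab$, hence one negative and one positive root, and that at the positive root $\partial_\lambda P=2x(1-x)\lambda+c-sx>0$. Since $\lambda'=-\partial_x P/\partial_\lambda P$ with $\partial_x P=(1-2x)\lambda^2-s\lambda=\lambda((1-2x)\lambda-s)$ and $\lambda>0$, the desired monotonicity $\lambda'>0$ is equivalent to the single inequality $(1-2x)\lambda(x)<s$ for all $x\in(0,1)$.

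For $x\in[1/2,1)$ this holds trivially because the left-hand side is nonpositive. For $x\in(0,1/2)$ it reads $\lambda(x)<s/(1-2x)$, and since $P(x,\cdot)$ is increasing beyond its positive root while $s/(1-2x)>0$ lies to the right of the negative root, this is equivalent to $P(x,s/(1-2x))>0$. Multiplying by the positive factor $(1-2x)^2$ and expanding turns it into $R(x)>0$, where
$$
R(x)=(s^2-4ab)x^2-2(sc-2ab)x+(sc-ab).
$$

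It remains to show $R>0$ on $[0,1/2)$, and this is where the hypothesis enters: $R(0)=sc-ab=(a+b+1)c-ab>0$ is exactly the assumption $c>ab/(a+b+1)$. Two further structural facts are $R(1/2)=s^2/4>0$ and that the leading coefficient $s^2-4ab=(a-b)^2+2(a+b)+1$ is positive, so $R$ is an upward parabola that is positive at both endpoints of the interval. The main obstacle, and the crux of the argument, is ruling out an interior dip below zero. I expect to handle this by locating the vertex $x^\ast=(sc-2ab)/(s^2-4ab)$: for $c\le 2ab/s$ or $c\ge s/2$ the vertex lies outside $(0,1/2)$, so positivity at the endpoints suffices; for $2ab/s<c<s/2$ one evaluates the minimum $R(x^\ast)=s^2(sc-ab-c^2)/(s^2-4ab)$ and checks $sc-ab-c^2>0$, i.e. $c^2-sc+ab<0$, which holds because in this range $c<s/2<\frac{s+\sqrt{s^2-4ab}}{2}$ and $c>2ab/s>\frac{s-\sqrt{s^2-4ab}}{2}$. (Equivalently, the discriminant of $R$ equals $4s^2(c^2-sc+ab)$, and whenever it is nonnegative both roots are seen to fall outside $(0,1/2)$.) Finally, monotonicity at the endpoint $x=0$ follows directly from (\ref{lamg}), whose linear coefficient $\frac{ab}{c}\cdot\frac{c(a+b+1)-ab}{c^2}$ is positive under the hypothesis, so $\lambda$ is increasing on all of $[0,1)$.
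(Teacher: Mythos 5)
Your argument is correct, and it takes a genuinely different route from the paper. You differentiate the defining quadratic $P(x,\lambda)=x(1-x)\lambda^2+(c-sx)\lambda-ab$ implicitly, observe $\partial_\lambda P>0$ at the positive root, and reduce $\lambda'>0$ to the single inequality $(1-2x)\lambda<s$, which after clearing denominators becomes positivity of the explicit quadratic $R(x)=(s^2-4ab)x^2-2(sc-2ab)x+(sc-ab)$ on $[0,1/2)$; you then settle this by an endpoint-plus-vertex analysis (all of which I have checked: $R(0)=sc-ab$, $R(1/2)=s^2/4$, $R(x^\ast)=s^2(sc-ab-c^2)/(s^2-4ab)$, and the location of $c$ between the roots of $c^2-sc+ab$ in the intermediate regime are all as you claim). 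The paper instead writes $\lambda(x)=2ab/\phi(x)$ and proves $\phi'<0$ via a convexity argument: $\phi''$ has constant sign on $[0,1)$ equal to the sign of $(a+b+1)c-ab-c^2$, and the two sign cases are handled using $\phi'(0)<0$ (which is where the hypothesis $c>ab/(a+b+1)$ enters there) together with the boundary values $\phi(0)=2c$ and $\phi(1)$. It is worth noting that the same quantity $sc-ab-c^2$ governs both proofs, appearing as the sign of $\phi''$ in the paper and as the numerator of your minimum value $R(x^\ast)$. Your version is purely algebraic once the square root is cleared and localizes the use of the hypothesis to the single value $R(0)>0$; the paper's version is shorter to state but leans on the second-derivative sign analysis, the behavior at $x=1$, and a small contradiction argument in the case $\phi''>0$.
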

\begin{proof}
We write
$$
\lambda (x)=\Frac{2ab}{\phi (x)},\,
\phi(x)=c-(a+b+1)x+\sqrt{((a+b+1)x-c)^2+4abx(1-x)}
$$
and we prove that $\phi (x)$ is monotonically decreasing in $[0,1)$ if $a,b,c>0$ with
$c>ab/(a+b+1)$.

We have $\phi' (0)=2\left(\Frac{ab}{c}-(a+b+1)\right)$ and then $\phi'(0)<0$ if $c>ab/(a+b+1)$. In
addition, $\phi(0)=2c$ (recall that $c>0$), while
$$
\phi (1)=\left\{
\begin{array}{l}
0,\,a+b+1-c\ge 0\\
2(c-a-b-1),\,a+b+1-c<0.
\end{array}
\right.
$$
Therefore, $\phi(0)>\phi(1)\ge 0$ if $a,\,b,\,c>0$.

On the other hand
$$
\phi''(x)=\Frac{4ab((a+b+1)c-ab-c^2)}{(((a+b+1)x-c)^2+4abx(1-x))^{3/2}},
$$
and we observe that $\phi''(x)$ does not change sign in $[0,1)$ and it has the same 
sign as $f(c)=(a+b+1)c-ab-c^2$. We observe that the quadratic function $f(c)$ is such that $f(\pm \infty)=-\infty$,
and has a maximum at $c=c_m=(a+b+1)/2>0$ where $f(c_m)=\frac14\left((a-b)^2+2(a+b)+1\right)>0$; we
observe that $f(a+b+1)=-ab<0$ and thefore $f(c)<0$ (and $\phi''(x)<0$ in $[0,1)$) if $c\ge a+b+1$.

If $\phi''(x)<0$ then necessarily $\phi' (x)<0$ in $(0,1)$ because $\phi'(0)<0$ and $\phi'(x)$ would decrease in $(0,1)$. 
This is the situation when $c\ge (a+b+1)$, because we have proved that $\phi''(x)<0$ in that case. If $c<a+b+1$ the same would
be true provided $\phi''(x)<0$.

In the cases $c<a+b+1$ for which 
$\phi''(x)>0$, it still holds that $\phi'(x)<0$ in $[0,1)$ under the hypothesis of the theorem, as we prove now. 
We have $\phi'(1)<0$ and $\phi'(x)$ is increasing in $[0,1)$. 
Then, if there existed $x_0\in (0,1)$ such that $\phi'(x_0)=0$ (and $\phi (x_0)>0$ 
because $\phi (x)$ is positive in $(0,1)$) we would have $\phi'(x)>0$ in $(x_0,1)$ because 
$\phi'(x)$ is increasing; but this implies that $0<\phi(x_0)<\phi(1)$, in contradiction with the fact that
$\phi(1)=0$.

\end{proof}

\begin{theorem}
\label{unaGa}
Let $a,b,c>0$, $c>ab/(a+b+1)$ then $h(x)<\lambda (x)$ for all $x\in (0,1)$ and 
$h(x)$ is monotonically increasing in $(0,1)$.
\end{theorem}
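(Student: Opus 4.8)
The plan is to read off a Riccati equation for $h(x)$ and apply Theorem~\ref{second} directly, the positive characteristic root being precisely the function $\lambda(x)$ defined above. First I would rewrite the differential equation for $h$ derived earlier as
$$
h'(x)=\Frac{ab}{x(1-x)}-\Frac{c-(a+b+1)x}{x(1-x)}\,h(x)-h(x)^2,
$$
so that in the notation of Theorem~\ref{second} the quadratic coefficient is $-1<0$, while the coefficient playing the role of ``$a(x)$'' is $ab/(x(1-x))$, which is strictly positive on $(0,1)$ since $a,b>0$. Hence the product of these two coefficients is negative throughout $(0,1)$, the characteristic equation has exactly one positive root, namely $\lambda(x)$, and we are in case~(1) of Theorem~\ref{second}.

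Next I would verify the remaining hypotheses at the left endpoint. From~(\ref{hgauss}) the ratio satisfies $h(0^+)=ab/c>0$, and its derivative at the origin equals $\tfrac{ab}{c}\cdot\tfrac{c(a+b+1)-ab}{c(c+1)}$, whose sign is that of $c(a+b+1)-ab$; under the hypothesis $c>ab/(a+b+1)$ this is strictly positive, so $h'(0^+)>0$. Comparing with~(\ref{lamg}), the same computation shows $\lambda'(0^+)>0$, and in fact the preceding Lemma guarantees that $\lambda(x)$ is increasing on the whole of $[0,1)$. Thus $h(0^+)>0$, $h'(0^+)\lambda'(0^+)>0$ and $\lambda'(x)>0$, which are exactly the conditions of case~(1) of Theorem~\ref{second} in the increasing subcase; its conclusion is $h(x)<\lambda(x)$, together with $\lambda'(x)h'(x)>0$, from which $h'(x)>0$ follows at once since $\lambda'(x)>0$.

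The one technical point to address is that the coefficients of the Riccati equation blow up at $x=0$ and $x=1$, whereas Theorem~\ref{second} is stated for coefficients continuous on a closed interval; the ratio $h$ itself, however, is analytic and positive on $[0,1)$ because $y(a,b,c,x)$ is a convergent power series with positive coefficients there when $a,b,c>0$. I would therefore apply Theorem~\ref{second} on each compact subinterval $[\varepsilon,1-\varepsilon]$, on which the coefficients are continuous. For $\varepsilon$ small enough the left-endpoint conditions $h(\varepsilon)>0$ and $h'(\varepsilon)\lambda'(\varepsilon)>0$ hold by the expansions above, and $\lambda'>0$ holds on $(\varepsilon,1-\varepsilon)$ by the Lemma, so the theorem yields $h<\lambda$ and $h'>0$ on that subinterval. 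Letting $\varepsilon\to 0^+$ propagates both conclusions to all of $(0,1)$.

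I do not expect a genuine obstacle here: the argument is essentially a bookkeeping check that the hypotheses of Theorem~\ref{second} are met, the only delicate point being the endpoint singularity, which is handled by the subinterval exhaustion. The substantive work --- showing that $\lambda$ is monotone, which is what selects case~(1) in its increasing rather than its decreasing form --- has already been carried out in the preceding Lemma, and the strict inequality $c>ab/(a+b+1)$ is exactly what is needed to make $h'(0^+)$ and $\lambda'(0^+)$ strictly positive (equality would give $h'(0^+)=0$ and require a finer analysis).
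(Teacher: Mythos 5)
Your proposal is correct and follows essentially the same route as the paper: the paper's proof likewise reads off $h(0^+)>0$ and $h'(0^+)>0$ from (\ref{hgauss}), invokes the preceding Lemma for the monotonicity of $\lambda(x)$, and concludes via case (1) of Theorem \ref{second}. Your additional care about the coefficient singularities at $x=0$ and $x=1$ (via exhaustion by compact subintervals) is a sound refinement of a point the paper leaves implicit.
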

\begin{proof}
From (\ref{hgauss}) we see that $h(0^+)>0$, $h'(0^+)>0$. Considering also the monotonicity of $\lambda (x)$, the 
result is an immediate consequence of Theorem \ref{second}.
\end{proof}

The recurrence relation can be used, similarly as we did before in other cases, to obtain further bounds. 
In particular, applying the recurrence (\ref{backG}) to the bound of Theorem \ref{unaGa}, we obtain an 
additional bound. We give those two bounds in terms of the Gauss hypergeometric function in the next theorem.

\begin{theorem}
\label{Gausst}
Suppose $a,b,c>0$, $c>ab/(a+b+1)$,  and denote $$H(x)=2c\Frac{{}_2 F_{1}(a,b;c;x)}{{}_2 F_{1}(a+1,b+1;c+1;x)},$$
$F(x)=4x(1-x)$ and $d=a+b+1$.
The following bounds hold:
$$
H(x)>c-dx+\sqrt{(dx-c)^2+abF(x)},
$$
$$
H(x)<c-1-(d-2)x+\sqrt{((d+2)x-(c+1))^2+(a+1)(b+1)F(x)}.
$$
The validity of the upper bound can be extended to  $c>\Frac{ab-2}{a+b+3}$.
\end{theorem}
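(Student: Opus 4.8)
The plan is to reduce both inequalities to the estimates for the ratio $h(a,b,c,x)$ already obtained in Theorem \ref{unaGa}. First I would record the link between $H(x)$ and $h(a,b,c,x)$. Using the normalisation $y(a,b,c,x)=\frac{\Gamma(a)\Gamma(b)}{\Gamma(c)}{}_2F_1(a,b;c;x)$ together with $\Gamma(a+1)=a\Gamma(a)$ (and similarly for $b$ and $c$), one finds $h(a,b,c,x)=\frac{ab}{c}{}_2F_1(a+1,b+1;c+1;x)/{}_2F_1(a,b;c;x)$, and therefore
$$H(x)=\frac{2ab}{h(a,b,c,x)}.$$
Since all the series involved are positive for $a,b,c>0$ and $x\in(0,1)$, we have $h(a,b,c,x)>0$. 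Consequently the lower bound for $H$ is equivalent to the upper bound $h<\lambda$ of Theorem \ref{unaGa}, while the upper bound for $H$ requires a lower bound for $h$, which I would produce from the recurrence.

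For the lower bound, Theorem \ref{unaGa} gives $h(x)<\lambda(x)$ under $c>ab/(a+b+1)$, hence $H(x)>2ab/\lambda(x)$, and it only remains to simplify $2ab/\lambda(x)$. Writing $p=dx-c$ and $q=\sqrt{p^2+abF(x)}$, the positive root reads $\lambda(x)=(p+q)/(2x(1-x))$, so that
$$\frac{2ab}{\lambda(x)}=\frac{4ab\,x(1-x)}{p+q}=\frac{ab\,F(x)}{p+q}=q-p,$$
where the last equality multiplies numerator and denominator by $q-p$ and uses $q^2-p^2=abF(x)$. As $q-p=c-dx+\sqrt{(dx-c)^2+abF(x)}$, this is exactly the claimed lower bound.

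For the upper bound I would use the backward recurrence (\ref{backG}),
$$h(a,b,c,x)=\frac{ab}{c-(a+b+1)x+x(1-x)h(a+1,b+1,c+1,x)},$$
whose denominator is positive because $h(a,b,c,x)>0$. Applying Theorem \ref{unaGa} to the shifted triple $(a+1,b+1,c+1)$ --- legitimate precisely when $c+1>(a+1)(b+1)/(a+b+3)$, i.e. $c>(ab-2)/(a+b+3)$ --- yields $h(a+1,b+1,c+1,x)<\lambda(a+1,b+1,c+1,x)$. Since $x(1-x)>0$ on $(0,1)$, inserting this upper bound enlarges the positive denominator, so that $h(a,b,c,x)>ab/D$ with $D=c-(a+b+1)x+x(1-x)\lambda(a+1,b+1,c+1,x)$, and hence $H(x)=2ab/h(a,b,c,x)<2D$. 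Obtaining $\lambda(a+1,b+1,c+1,x)$ from $\lambda$ by the shift $d\mapsto d+2$, $c\mapsto c+1$, $ab\mapsto(a+1)(b+1)$, one has $x(1-x)\lambda(a+1,b+1,c+1,x)=\frac12[(d+2)x-(c+1)+\sqrt{((d+2)x-(c+1))^2+(a+1)(b+1)F(x)}]$, and the non-radical part of $2D$ collapses by $2(c-dx)+(d+2)x-(c+1)=c-1-(d-2)x$, reproducing the stated upper bound.

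The one structural point demanding care is the reversal of inequality directions: an upper bound on the ``next'' ratio $h(a+1,b+1,c+1,x)$ produces, through the recurrence, a lower bound on $h(a,b,c,x)$, which the reciprocal relation $H=2ab/h$ flips once more into the desired upper bound; everything else is routine algebra (the two rationalisation identities and the parameter bookkeeping), so no serious obstacle is expected. Finally I would note that the extended hypothesis $c>(ab-2)/(a+b+3)$ is nothing but the hypothesis of Theorem \ref{unaGa} for the shifted triple, and that it is genuinely weaker than $c>ab/(a+b+1)$ since $(ab-2)(a+b+1)-ab(a+b+3)=-2(a+1)(b+1)<0$; thus the upper bound holds on a strictly larger range than the lower bound.
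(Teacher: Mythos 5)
Your proposal is correct and follows the same route the paper indicates: the lower bound for $H(x)$ is the bound $h<\lambda$ of Theorem \ref{unaGa} rewritten via $H=2ab/h$ and rationalised, and the upper bound comes from one step of the backward recurrence (\ref{backG}) applied to the shifted bound $h(a+1,b+1,c+1,x)<\lambda(a+1,b+1,c+1,x)$, whose hypothesis is exactly the extended condition $c>(ab-2)/(a+b+3)$. The algebra and the sign/monotonicity bookkeeping are all accurate.
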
 

We observe that in the confluent limit we recover the bounds described before for the ratio of confluent hypergeometric functions.
In the confluent limit we make the replacement $x\rightarrow x/b$ and take the limit $b\rightarrow \infty$. After this, in order
to make the connection with the notation for the confluent case (we did not use the parameter $c$) we rename $c$ as $b$. This means
that in the previous theorem $2c\, {}_2 F_{1}(a,b;c;x)/{}_2 F_{1}(a+1,b+1;c+1;x)$ would be replaced by $2bM(a,b,x)/M(a+1,b+1,x)$ and
in the bounds we must consider the replacements $c\rightarrow b$, $(d+m)x\rightarrow x$, $(b+m)F(x)\rightarrow 4x$,  $m$ being any
 constant value. With this, we recover Theorem \ref{Ku} for the case $b>a$.
 
 The case $a<b$ of Theorem \ref{Ku}, however, appears to be disconnected from Theorem \ref{Gausst}.

 \subsection{Future work}

 There are many possibilities for exploring additional bounds for the confluent and Gauss hypergeometric functions. To begin
 with, and comparing with the most well studied case (modified Bessel functions) there is a number of voids
 that need to be filled in order to have a result similar to that of Table 1, 
 where no gaps in the description of bounds exist and there exist uniparametric sets of bounds connecting the best upper
 and lower bounds. Also,
  it is not known whether other types of bounds, like those obtained from the iteration of the Riccati equation 
  (as shown in the Bessel case) 
  or from other type of differential equations (as in the sections for parabolic cylinder functions and Bessel functions) 
  are feasible.

\bibliographystyle{amsalpha}
\bibliography{cont}

\end{document}